\newtheorem{theorem}{Theorem}[section]
\newtheorem{lemma}[theorem]{Lemma}
\newtheorem{proposition}[theorem]{Proposition}
\newtheorem{question}[theorem]{Question}
\newtheorem{corollary}[theorem]{Corollary}
\newtheorem{conjecture}[theorem]{Conjecture}
\newtheorem{definition}[theorem]{Definition}
\newtheorem*{frobconjecture*}{Markoff Injectivity Conjecture}
\newtheorem{maintheorem}{Theorem}
\newtheorem{maincorollary}{Corollary}
\renewcommand\a{\symb{0}}
\renewcommand\b{\symb{1}}
\newcommand{\Z}{\mathbb{Z}}
\newcommand{\Q}{\mathbb{Q}}
\newcommand{\R}{\mathbb{R}}
\newcommand{\Bcal}{\mathcal{B}}
\newcommand{\Lcal}{\mathcal{L}}
\newcommand{\GL}{\mathrm{GL}}
\newcommand{\SL}{\mathrm{SL}}
\newcommand{\occ}{\mathrm{occ}}
\newcommand{\symb}[1]{\mathtt{#1}}		
\begin{document}

\title
[$q$-analog of the Markoff injectivity conjecture]
{The $q$-analog of the Markoff injectivity conjecture\\ over the language of a balanced sequence}



\author[S.~Labb\'e]{S\'ebastien Labb\'e}
\address[S.~Labb\'e]{Univ. Bordeaux, CNRS, Bordeaux INP, LaBRI, UMR 5800, F-33400, Talence, France}
\email{sebastien.labbe@labri.fr}
\urladdr{http://www.slabbe.org/}

\author[M.~Lapointe]{Mélodie Lapointe}
\thanks{This work was supported by the Agence Nationale de la Recherche through the project Codys (ANR-18-CE40-0007). The second author acknowledges the support of the Natural Sciences and Engineering Research Council of Canada (NSERC), [funding reference number BP–545242–2020] and the support of the Fonds de Recherche du Québec en Science et Technologies.}
\address[M.~Lapointe]{Université de Paris, CNRS, IRIF, F-75006 Paris, France}
\email{lapointe@irif.fr}
\urladdr{http://lapointemelodie.github.io/}

\keywords{Balance \and Markoff spectrum \and Sturmian \and Christoffel \and $q$-analog}
\subjclass[2010]{Primary 11J06; Secondary 68R15 \and 05A30}


%
%
%

\date{\today}

\begin{abstract}
The Markoff injectivity conjecture states that $w\mapsto\mu(w)_{12}$ is injective on the set of Christoffel words where $\mu:\{\mathtt{0},\mathtt{1}\}^*\to\mathrm{SL}_2(\mathbb{Z})$ is a certain homomorphism and $M_{12}$ is the entry above the diagonal of a $2\times2$ matrix $M$. Recently, Leclere and Morier-Genoud (2021) proposed a $q$-analog $\mu_q$ of $\mu$ such that $\mu_{q}(w)_{12}|_{q=1}=\mu(w)_{12}$ is the Markoff number associated to the Christoffel word $w$ when evaluated at $q=1$. We show that there exists an order $<_{radix}$ on $\{\mathtt{0},\mathtt{1}\}^*$ such that for every balanced sequence $s \in \{\mathtt{0},\mathtt{1}\}^\mathbb{Z}$ and for all factors $u, v$ in the language of $s$ with $u <_{radix} v$, the difference $\mu_q(v)_{12} - \mu_q(u)_{12}$ is a nonzero polynomial of indeterminate $q$ with nonnegative integer coefficients. Therefore, the map $u\mapsto\mu_q(u)_{12}$ is injective over the language of a balanced sequence. The proof uses an equivalence between balanced sequences satisfying some Markoff property and indistinguishable asymptotic pairs. 
\end{abstract}

\maketitle

\section{Introduction}

A Markoff triple is a positive solution of the Diophantine equation
\begin{align}~\label{Eq:Markoff}
    x^2 + y^2 + z^2 = 3xyz.
\end{align}
It was introduced by Markoff\cite{M1879,MR1510073} to describe minima of indefinite real binary quadratic forms.
Positive solutions of Equation~\ref{Eq:Markoff} can be computed recursively.
If $(x,y,z)$ is a Markoff triple, then $(x,3xy-z,y)$ and $(y,3yz-x,z)$ are also Markoff triples, see Figure~\ref{fig:tree_rule}.
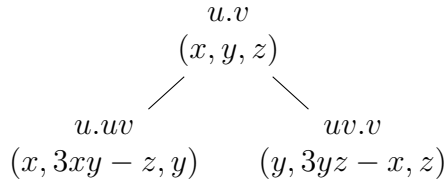
\begin{figure}[h]
\begin{tikzpicture}
\tikzstyle{level 1}=[sibling distance=8em]
    \node[align=center] {$u.v$ \\ $(x,y,z)$ }
        child {node[align=center]{$u.uv$ \\ $(x, 3xy-z,y)$}}
        child {node[align=center]{$uv.v$ \\ $(y, 3yz-x,z)$}};
\end{tikzpicture}
    \caption{Binary tree structure of Christoffel words $u.v$ and Markoff triples $(x,y,z)$.}
    \label{fig:tree_rule}
\end{figure}
A Markoff triple is called proper as long as $x$, $y$ and $z$ are pairwise distinct. 
There are only two Markoff triples which are not proper, namely $(1,1,1)$ and $(1,2,1)$. 
If $(x,y,z)$ is a proper Markoff triple with $y \geq x$ and $y \geq z$, then $(x, 3xy-z,y) \neq (y, 3yz-x,z)$ and both $3xy -z$ and $3yz-x$ are greater than $y$.
Hence, the proper Markoff triples naturally label a complete infinite binary tree, see Figure~\ref{fig:tree}.
All proper Markoff triple have a maximum value. 
Frobenius \cite{F1913} asked whether each Markoff number (an element of a
Markoff triple) is the maximum of a unique Markoff triple. 
The question known as the \emph{uniqueness conjecture} is still open. 
A book was devoted to it and its many equivalent formulations
for its $100^{th}$ anniversary \cite{MR3098784}.

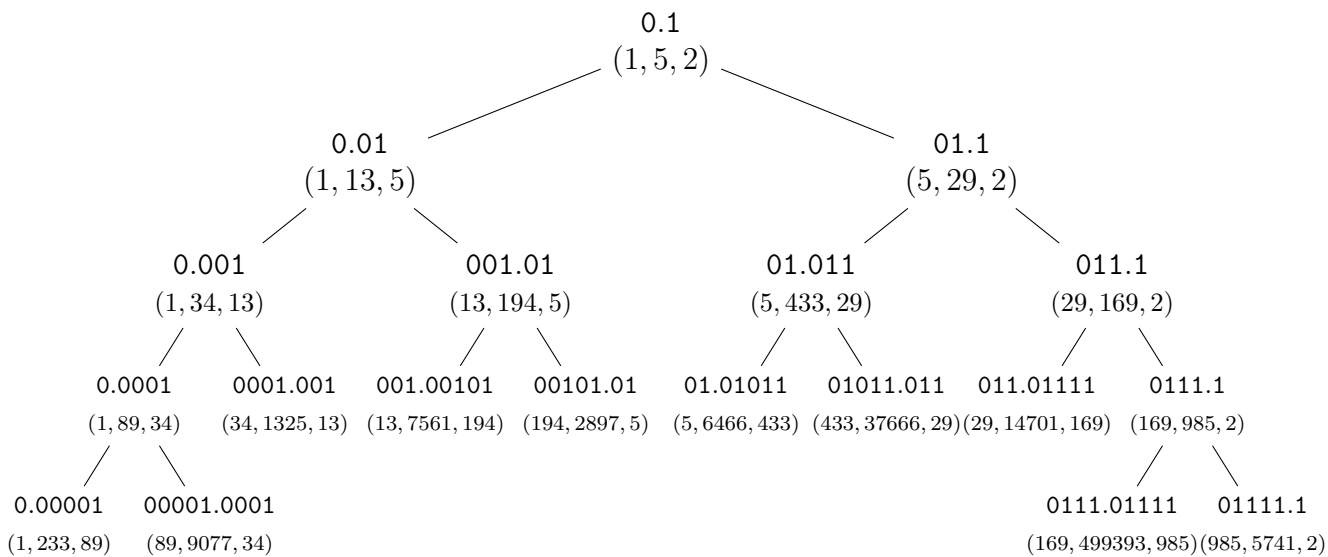
\begin{figure}[h]
\begin{center}
    \begin{tikzpicture}[yscale=.8]
    \node[align=center] (A0) at (0,10) {$\a.\b$\\   $(1,5,2)$};
    \node[align=center] (B0) at (-4,8) {$\a.\a\b$\\  $(1,13,5)$};
    \node[align=center] (B1) at ( 4,8) {$\a\b.\b$\\  $(5,29,2)$};
    \node[align=center] (C0) at (-6,6) {$\a.\a\a\b$\\  \footnotesize$(1,34,13)$};
    \node[align=center] (C1) at (-2,6) {$\a\a\b.\a\b$\\ \footnotesize$(13,194,5)$};
    \node[align=center] (C2) at ( 2,6) {$\a\b.\a\b\b$\\ \footnotesize$(5,433,29)$};
    \node[align=center] (C3) at ( 6,6) {$\a\b\b.\b$\\  \footnotesize$(29,169,2)$};
    \node[align=center] (D0) at (-7,4) {\footnotesize$\a.\a\a\a\b$\\    \tiny$(1,89,34)$};
    \node[align=center] (D1) at (-5,4) {\footnotesize$\a\a\a\b.\a\a\b$\\  \tiny$(34,1325,13)$};
    \node[align=center] (D2) at (-3,4) {\footnotesize$\a\a\b.\a\a\b\a\b$\\ \tiny$(13,7561,194)$};
    \node[align=center] (D3) at (-1,4) {\footnotesize$\a\a\b\a\b.\a\b$\\  \tiny$(194,2897,5)$};
    \node[align=center] (D4) at ( 1,4) {\footnotesize$\a\b.\a\b\a\b\b$\\  \tiny$(5,6466,433)$};
    \node[align=center] (D5) at ( 3,4) {\footnotesize$\a\b\a\b\b.\a\b\b$\\ \tiny$(433,37666,29)$};
    \node[align=center] (D6) at ( 5,4) {\footnotesize$\a\b\b.\a\b\b\b\b$\\ \tiny$(29,14701,169)$};
    \node[align=center] (D7) at ( 7,4) {\footnotesize$\a\b\b\b.\b$\\    \tiny$(169,985,2)$};
    \node[align=center] (E0) at (-8,2) {\footnotesize$\a.\a\a\a\a\b$\\   \tiny$(1,233,89)$};
    \node[align=center] (E1) at (-6,2) {\footnotesize$\a\a\a\a\b.\a\a\a\b$\\\tiny$(89,9077,34)$};
    \node[align=center] (E6) at ( 6,2) {\footnotesize$\a\b\b\b.\a\b\b\b\b$\\\tiny$(169,499393,985)$};
    \node[align=center] (E7) at ( 8,2) {\footnotesize$\a\b\b\b\b.\b$\\   \tiny$(985,5741,2)$};
    \draw (A0) -- (B0);
    \draw (A0) -- (B1);
    \draw (B0) -- (C0);
    \draw (B0) -- (C1);
    \draw (B1) -- (C2);
    \draw (B1) -- (C3);
    \draw (C0) -- (D0);
    \draw (C0) -- (D1);
    \draw (C1) -- (D2);
    \draw (C1) -- (D3);
    \draw (C2) -- (D4);
    \draw (C2) -- (D5);
    \draw (C3) -- (D6);
    \draw (C3) -- (D7);
    \draw (D0) -- (E0);
    \draw (D0) -- (E1);
    \draw (D7) -- (E6);
    \draw (D7) -- (E7);
\end{tikzpicture}
\end{center}
    \caption{Binary tree of proper Christoffel words and proper Markoff triples}
    \label{fig:tree}
\end{figure}

The conjecture can be stated in terms of Christoffel words \cite{MR3887697}.
Christoffel words are words over the alphabet $\{\a,\b\}$ also satisfying
a binary tree structure: $\a$, $\b$ and $\a\b$ are Christoffel words and if
$u,v,uv\in\{\a,\b\}^*$ are Christoffel words then $uuv$ and $uvv$ are Christoffel
words \cite{MR2464862},
see Figure~\ref{fig:tree_rule}.
Note that these are usually named \emph{lower} Christoffel words.
It is known that each Markoff number can be expressed in terms of a Christoffel
word.
More precisely, 
let $\mu$ be the monoid homomorphism $\{\a,\b\}^* \rightarrow \SL_2(\mathbb{Z})$ defined by 
\[
    \mu(\a) =
\left(\begin{array}{cc}
    2 & 1 \\
    1 & 1
\end{array}\right)
\quad
\text{ and }
\quad
    \mu(\b) = 
\left(\begin{array}{cc}
    5 & 2 \\
    2 & 1
\end{array}\right).
\] 
Each Markoff number is equal to $\mu(w)_{12}$ for some Christoffel word $w$ 
(see \cite{R2009}), that is, the element above the diagonal in the matrix $\mu(w)$.
Moreover, positive primitive elements of the free group
$F_2$ on two generators are in bijection with Markoff triples~\cite{Bombieri2007}. 
Both results are equivalent since primitive elements of the free group $F_2$
are in bijection with Christoffel words \cite{MR2295123}.

In other words, the map $w\mapsto\mu(w)_{12}$ from Christoffel words to Markoff
numbers is surjective.
For example, the Markoff number 194 is associated with the Christoffel word $\a\a\b\a\b$
as it is the entry at position $(1,2)$ in the matrix
$\mu(\a\a\b\a\b)=
\left(\begin{smallmatrix}
    463 & 194 \\
    284 & 119
\end{smallmatrix}\right)$.
Whether this map provides a bijection between Christoffel
words and Markoff numbers is equivalent to the uniqueness conjecture.
Indeed, the uniqueness conjecture can be expressed in terms of the
injectivity of the map $w\mapsto\mu(w)_{12}$ \cite[\S 3.3]{MR3887697}.


\begin{frobconjecture*}
    The map $w\mapsto\mu(w)_{12}$ is injective on the set of Christoffel words.
\end{frobconjecture*}

The map $w\mapsto\mu(w)_{12}$ is defined over the monoid $\{\a,\b\}^*$ not
only on Christoffel words.
On this extended domain, Lapointe and Reutenauer showed that
$w\mapsto\mu(w)_{12}$ is strictly increasing (thus injective) over the language of
factors appearing in a Christoffel word 
\cite{lapointe_these_2020,LR2021}, thus also for all Christoffel words on an infinite path in the binary tree of Christoffel words.
The map is not injective on $\{\a,\b\}^*$ as for example,
$\mu(\a\a\b\b)_{12} = 75 = \mu(\a\b\a\b)_{12}$.
But Lapointe and Reutenauer conjectured that it is injective on the language of
all factors of Christoffel words \cite[Conjecture 2]{LR2021}.

\subsection{$q$-analogs}

Markoff numbers and the
Markoff injectivity conjecture can be parametrized by introducing a parameter~$q$.
Recall that the $q$-analog of a nonnegative integer $n$ is
\[
    [n]_q = 1+q+\dots+q^{n-1}=\frac{1-q^n}{1-q}.
\]
Recently, 
the $q$-analog 
    $\left[\frac{a}{b}\right]_q\in \Q(q)$
of every rational number $\frac{a}{b}\in\Q$ was introduced
to be a ratio of polynomials over $q$
defined from the continued fraction expansion of $\frac{a}{b}$
\cite{MR4073883}.
It also defines naturally 
the $q$-analog of all real numbers as an infinite series over the variable $q$
\cite{MorierGenoud2019}.
The approach is based on the following $q$-deformation of the generators
$R=\left(\begin{smallmatrix}
    1&1\\
    0&1
\end{smallmatrix}\right)$
and
$S=\left(\begin{smallmatrix}
    0&-1\\
    1&0
\end{smallmatrix}\right)$
of $\mathrm{PSL}_2(\Z)=\SL_2(\Z)/\pm\mathrm{Id}$:
\[
    R_q=\left(\begin{array}{cc}
        q & 1 \\
        0 & 1
    \end{array}\right)
    \qquad
    \text{ and }
    \qquad
    S_q=\left(\begin{array}{cc}
        0 & -q^{-1} \\
        1 & 0
    \end{array}\right).
\]
Since $\mu(\a)=R^2SR$
and $\mu(\b)=R^3SR^2SR$, 
the $q$-analog of
$\mu(\a)$ and $\mu(\b)$ are 
\cite{leclere_q-deformations_2021}
\begin{align*}
    \mu_q(\a)&=R_q^2S_qR_q=
\left(\begin{array}{cc}
    q + q^{2} & 1 \\
    q & 1
\end{array}\right),\\
    \mu_q(\b)&=R_q^3S_qR_q^2S_qR_q=
\left(\begin{array}{cc}
    q + 2q^2+q^3+q^4 & 1 + q \\
    q + q^{2} & 1
\end{array}\right).
\end{align*}
Therefore, this defines a morphism of monoids $\mu_q:\{\a,\b\}^*\to\GL_2(\Z[q^{\pm 1}])$.

The $q$-analog of a nonnegative integer, a rational or a real number $\alpha$ 
has the property of being equal to $\alpha$ when evaluated at $q=1$ or more generally when $q\to 1$.
Likewise, we may observe that $\mu_{q}(\a)|_{q=1}=\mu(\a)$ and $\mu_{q}(\b)|_{q=1}=\mu(\b)$. Therefore, we have
$\mu_{q}(w)_{12}|_{q=1}=\mu(w)_{12}$ for all $w\in\{\a,\b\}^*$.
Thus, if $w\in\{\a,\b\}^*$ is a Christoffel word, the polynomial $\mu_q(w)_{12}$ over the variable $q$
is a \emph{$q$-analog of a Markoff number} satisfying that $\mu_{q}(w)_{12}$ evaluated at $q = 1 $ is a Markoff number, see Figure~\ref{fig:tree-of-q-Markoff-numbers}.
For example,
\[
    \mu_q(\a\a\b\a\b)_{12}=
        1 + 4 q + 10 q^{2} + 18 q^{3} + 27 q^{4} + 33 q^{5} + 33 q^{6} + 29 q^{7} + 21 q^{8} + 12 q^{9} + 5 q^{10} + q^{11}.
\]
A natural question is to understand the structure of the coefficients of
$\mu_{q}(w)_{12}$ whose sum is a Markoff number when $w$ is a Christoffel word.

\begin{figure}[h]
\begin{center}
    \begin{tikzpicture}[yscale=.8]
    \node[draw,right,align=center] (A0) at (0,0) {$\a.\b$\\   \scriptsize $1+q+2q^2+q^3$};
    \node[draw,right,align=center] (B0) at (2,-3.5) {$\a.\a\b$\\ \scriptsize$1 + 2 q + 3 q^{2} + 3 q^{3} + 3 q^{4} + q^{5}$};
    \node[draw,right,align=center] (B1) at (2,3.5) {$\a\b.\b$\\  \scriptsize
         $1 + 2 q + 5 q^{2} + 6 q^{3} + 6 q^{4} + 5 q^{5} + 3 q^{6} + q^{7}$};
    \node[draw,right,align=center] (C0) at (4,-5.5) {$\a.\a\a\b$\\  \scriptsize
        $1 + 3 q + 5 q^{2} + 7 q^{3} + 7 q^{4} + 6 q^{5} + 4 q^{6} + q^{7}$};
    \node[draw,right,align=center] (C1) at (4,-1.5) {$\a\a\b.\a\b$\\ \scriptsize
        $1 + 4 q + 10 q^{2} + 18 q^{3} + 27 q^{4} + 33 q^{5} + 33 q^{6} + 29 q^{7} + 21 q^{8} + 12 q^{9} + 5 q^{10} + q^{11}$};
    \node[draw,right,align=center] (C2) at (4,1.5) {$\a\b.\a\b\b$\\ \scriptsize
        $1 + 4 q + 12 q^{2} + 25 q^{3} + 42 q^{4} + 58 q^{5} + 68 q^{6} + 69 q^{7} + 61 q^{8} + 45 q^{9} + 28 q^{10} + 14 q^{11} + 5 q^{12} + q^{13}$};
    \node[draw,right,align=center] (C3) at (4,5.5) {$\a\b\b.\b$\\  \scriptsize
        $1 + 3 q + 9 q^{2} + 16 q^{3} + 24 q^{4} + 29 q^{5} + 29 q^{6} + 25 q^{7} + 18 q^{8} + 10 q^{9} + 4 q^{10} + q^{11}$};
    \draw (A0) -- (B0);
    \draw (A0) -- (B1);
    \draw (B0) -- (C0);
    \draw (B0) -- (C1);
    \draw (B1) -- (C2);
    \draw (B1) -- (C3);
    \draw (C0) -- ++ (1,1);
    \draw (C0) -- ++ (1,-1);
    \draw (C1) -- ++ (1,1);
    \draw (C1) -- ++ (1,-1);
    \draw (C2) -- ++ (1,1);
    \draw (C2) -- ++ (1,-1);
    \draw (C3) -- ++ (1,1);
    \draw (C3) -- ++ (1,-1);
\end{tikzpicture}
\end{center}
    \caption{Binary tree of proper Christoffel words $w$ and proper $q$-Markoff
    numbers $\mu_q(w)_{12}$.
    The sequence of polynomials associated
    to words $w\in\{\a\a^*\b\}$ is a subsequence
    of a sequence indexed
    in the Online Encyclopedia of Integer Sequences
    at \url{http://oeis.org/A123245}.}
    \label{fig:tree-of-q-Markoff-numbers}
\end{figure}
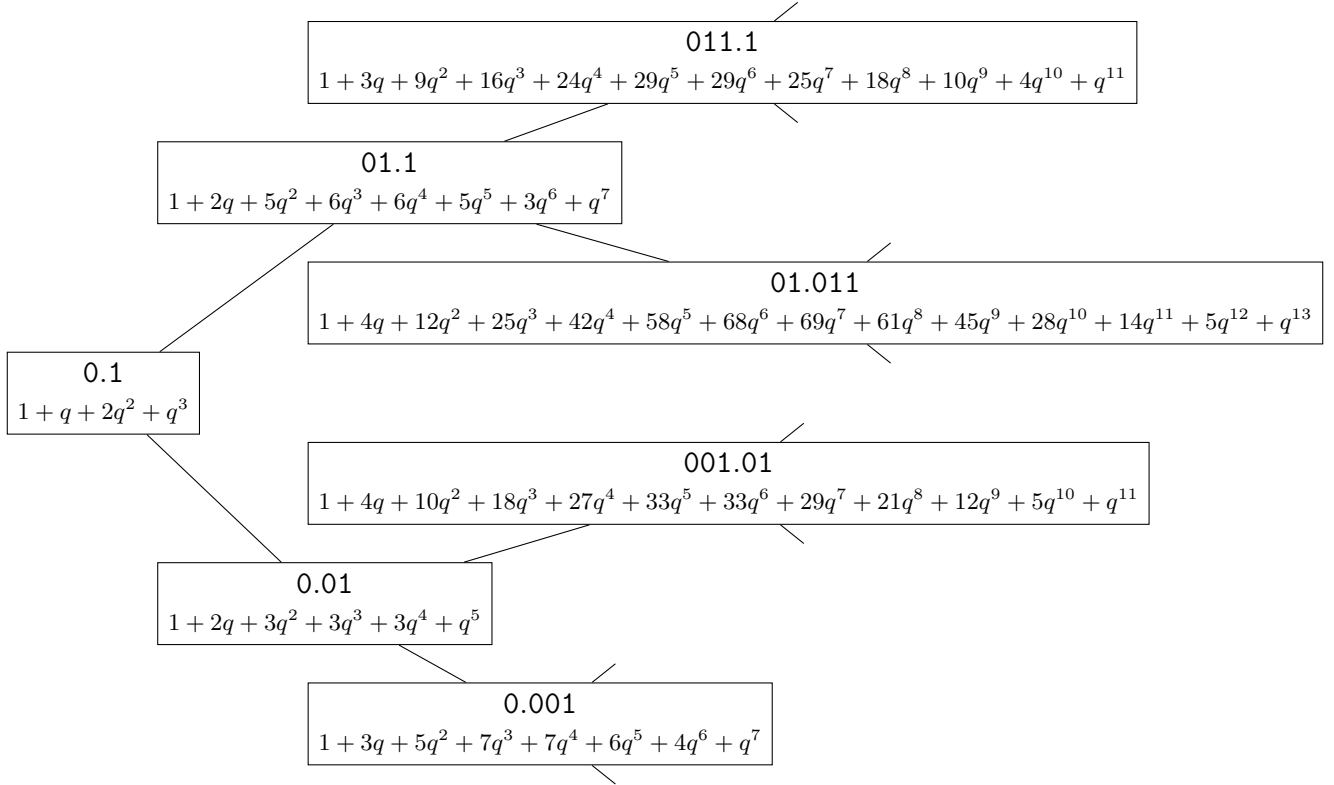

\begin{question}
    Is there a combinatorial interpretation for the degree and coefficients of
    the $q$-Markoff number $\mu_q(w)_{12}\in\Z[q]$ associated to a Christoffel word
    $w\in\{\a,\b\}^*$?
\end{question}

Also, a natural extension of the Markoff injectivity conjecture
to the $q$-analog of Markoff numbers is the following.

\begin{conjecture}[$q$-analog of the Markoff Injectivity Conjecture]
    \label{conj:q-analog-markoff}
    The map $\{\a,\b\}^*\to\Z[q]$ defined by
    $w\mapsto\mu_q(w)_{12}$ is injective over 
    the set of Christoffel words.
\end{conjecture}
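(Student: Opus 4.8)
The plan is to sidestep the cross-slope difficulty completely by reading the slope of a Christoffel word directly off the polynomial $\mu_q(w)_{12}$. The decisive structural fact is that a (lower) Christoffel word is the \emph{unique} primitive word of its slope: for each pair $(|w|_\a,|w|_\b)$ with $\gcd(|w|_\a,|w|_\b)=1$ there is exactly one Christoffel word. Hence it is enough to show that the two integers $|w|_\a$ and $|w|_\b$ can be recovered from $\mu_q(w)_{12}\in\Z[q]$ alone. Once this is done, two distinct Christoffel words necessarily have distinct reduced slope pairs, hence distinct invariants, hence distinct polynomials, and injectivity over the \emph{entire} set of Christoffel words follows \emph{globally}, with no need to place the two words inside a common balanced sequence. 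This is exactly the gap left by the balanced-sequence approach, where words of genuinely different slopes cannot coexist in one language.

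To recover the pair $(|w|_\a,|w|_\b)$ I would extract two independent linear invariants. First, I claim that for every Christoffel word $w$ of length at least $2$,
\[
    \deg \mu_q(w)_{12} = 2|w|_\a + 4|w|_\b - 3,
\]
and second, that the coefficient of $q^{\deg \mu_q(w)_{12}-1}$ equals $|w|_\a+|w|_\b$. These two equations form an invertible linear system, since $\left(\begin{smallmatrix}1&1\\2&4\end{smallmatrix}\right)$ has nonzero determinant, so the polynomial determines $|w|_\a$ and $|w|_\b$. The trivial words $w\in\{\a,\b\}$ are handled separately: $\mu_q(\a)_{12}=1$ and $\mu_q(\b)_{12}=1+q$ have distinct degrees and, by the degree formula, differ from every longer word. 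I note in passing that this argument exploits data ($\deg$ and the subleading coefficient) that collapses at $q=1$, which is consistent with the fact that the $q$-analog statement is implied by, and so no stronger than, the classical uniqueness conjecture; the present route therefore has a chance of succeeding without resolving the classical problem.

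The engine behind both formulas is that $\mu_q(\a)$ and $\mu_q(\b)$ have entries in $\N[q]$, whence every $\mu_q(w)$ does too; consequently \emph{no cancellation of leading terms occurs}, and the top two coefficients of each entry of a product are computed exactly by summing nonnegative contributions. I would run the induction along the Christoffel tree: writing each Christoffel word through its standard factorization $w=w_1w_2$ and using $\mu_q(w_1w_2)=\mu_q(w_1)\mu_q(w_2)$, I would carry as inductive data the degree together with the leading and subleading coefficients of all four entries of $\mu_q(w_1)$ and $\mu_q(w_2)$, and check that the two tree operations $(w_1,w_2)\mapsto(w_1,w_1w_2)$ and $(w_1,w_2)\mapsto(w_1w_2,w_2)$ preserve the required pattern. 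That $\deg\mu_q(w)_{12}=2|w|_\a+4|w|_\b-3$ is genuinely a Christoffel phenomenon: for a non-Christoffel word of the same content, e.g.\ $\a\b\a$, the $(1,2)$ entry has strictly larger degree ($6$ rather than $5$), so the degree bookkeeping must use the tree and not merely the abelianization $(|w|_\a,|w|_\b)$.

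The step I expect to be the main obstacle is the subleading-coefficient formula. The degree formula is comparatively soft: with no cancellation, the degree of each entry is governed by a max-plus (tropical) product of the fixed degree-matrices of $\mu_q(\a)$ and $\mu_q(\b)$, and one finds that all four entry-degrees equal $2|w|_\a+4|w|_\b$ minus a position-dependent constant, which is routine to verify along the tree. Because every entry-degree only exposes the single quantity $2|w|_\a+4|w|_\b$, the second equation \emph{cannot} come from a degree and must come from a coefficient — precisely the delicate part. For the subleading coefficient I must control which entries attain the top degree at each multiplication and exactly what subleading coefficients they contribute, ruling out that two maximal-degree contributions reinforce in a way that perturbs the expected value $|w|_\a+|w|_\b$. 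This is why the induction has to propagate the full top-two-coefficient data of all four entries rather than degrees alone, and verifying its stability under both tree operations is where the real work lies.
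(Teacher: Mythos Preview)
This statement is Conjecture~1.2 in the paper and is \emph{not} proved there. The paper's main result, Theorem~A, establishes only that $w\mapsto\mu_q(w)_{12}$ is strictly increasing (hence injective) over the language $\Lcal(s)$ of a \emph{fixed} balanced sequence $s$; since two Christoffel words of different reduced slopes need not lie in any common balanced language, the full conjecture over the set of all Christoffel words is left open. There is thus no proof in the paper to compare your proposal against --- you are attacking a problem the paper does not settle.

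Your strategy --- recovering $(|w|_\a,|w|_\b)$ from the degree and subleading coefficient of $\mu_q(w)_{12}$, then invoking the uniqueness of the Christoffel word with a given Parikh vector --- is sound, and in fact it goes through without the difficulty you anticipate. The clean inductive invariant for a Christoffel word $w$ with $|w|\ge2$ is that all four entries of $\mu_q(w)$ are monic, with degree matrix $\left(\begin{smallmatrix}D & D-3\\ D-1 & D-4\end{smallmatrix}\right)$ where $D=2|w|_\a+4|w|_\b$, and subleading-coefficient matrix $\left(\begin{smallmatrix}|w| & |w|\\ |w|-1 & |w|-1\end{smallmatrix}\right)$. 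In a product $\mu_q(w_1)\mu_q(w_2)$ with both $|w_1|,|w_2|\ge2$, each entry is a sum of two terms whose degrees differ by exactly $3+1=4$, so the leading and subleading coefficients of every entry come solely from the dominant summand and the invariant propagates mechanically; in the boundary cases $w_1=\a$ (leftmost branch) and $w_2=\b$ (rightmost branch) the gap is still at least $2$, which suffices. Your feared ``reinforcement of two maximal-degree contributions'' therefore never occurs. Carried out, this argument proves Conjecture~1.2 outright, strictly extending what the paper establishes.
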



Markoff Injectivity Conjecture implies \Cref{conj:q-analog-markoff},
since if 
$\mu_q(u)_{12}|_{q=1}\neq\mu_q(v)_{12}|_{q=1}$,
then $\mu_q(u)_{12}\neq\mu_q(v)_{12}$.
However, \Cref{conj:q-analog-markoff} is weaker than the classical Markoff
Injectivity Conjecture since two different polynomials may take the same value
at $q = 1$. So a priori \Cref{conj:q-analog-markoff} may be ``easier'' than
the classical conjecture. 

As mentioned above, the Markoff Injectivity Conjecture was extended to the
language of factors of all Christoffel words \cite[Conjecture 2]{LR2021}.
This language is equal to the language of all balanced sequences over a binary alphabet.
Balanced sequences include biinfinite periodic $\leftidx{^\infty}w^\infty$
repetitions of a Christoffel word, 
Sturmian sequences which are aperiodic
and more (ultimately periodic biinfinite words which are not purely periodic, called skew by Morse and Hedlund \cite{MR0000745}). 
See its definition in Section~\ref{sec:balanced}.
We extend the $q$-analog of the Markoff Injectivity Conjecture
to the language of all balanced sequences
$\Bcal=\{s\in\{\a,\b\}^\Z\colon s \text{ is balanced}\}$.

\begin{conjecture}
    \label{conj:q-analog-markoff-over-balanced-language}
    The map $\{\a,\b\}^*\to\Z[q]$ defined by
    $w\mapsto\mu_q(w)_{12}$ is injective over 
    the language 
    $\Lcal(\Bcal)=\bigcup_{s\in\Bcal}\Lcal(s)$
    of all balanced sequences.
\end{conjecture}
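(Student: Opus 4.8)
The plan is to leverage the per-sequence result quoted in the abstract — that there is an order $<_{radix}$ with $\mu_q(v)_{12}-\mu_q(u)_{12}\in\N[q]\setminus\{0\}$ (nonnegative coefficients, nonzero) whenever $u<_{radix}v$ are both factors of a single balanced sequence — and to extend it from one language $\Lcal(s)$ to the union $\Lcal(\Bcal)$ by a \emph{telescoping} argument. Given distinct $u,v\in\Lcal(\Bcal)$ with, say, $u<_{radix}v$, I would interpolate a finite chain $u=w_0<_{radix}w_1<_{radix}\dots<_{radix}w_m=v$ of balanced words in which each consecutive pair $w_i,w_{i+1}$ lies in the language of a common balanced sequence. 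Each step contributes $\mu_q(w_{i+1})_{12}-\mu_q(w_i)_{12}\in\N[q]\setminus\{0\}$ by the per-sequence theorem, and summing gives $\mu_q(v)_{12}-\mu_q(u)_{12}\in\N[q]\setminus\{0\}$, in particular nonzero, which is exactly injectivity. For instance $\a\a\b\a\b<_{radix}\a\b\a\a\b<_{radix}\b\a\a\a\b$ works because the first pair are rotations of $\a\a\b\a\b$ (hence factors of the period-$\a\a\b\a\b$ balanced sequence) and the second pair are both factors of the period-$\a\a\a\b\a\a\b$ balanced sequence.

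The first serious step is therefore a combinatorial \emph{connectivity lemma}: which pairs $u<_{radix}v$ of balanced words admit such a radix-monotone chain of co-factorable steps? This is where I expect the main obstacle. Since $<_{radix}$ orders primarily by length, a chain joining two words of the same length $n$ is confined to length $n$, and two balanced words of length $n$ may be \emph{pattern-incompatible}: $\a\b\a\b\a$ contains the length-three factor $\b\a\b$ (two $\b$'s) while $\b\a\a\a\b$ contains the length-three factor $\a\a\a$ (no $\b$), and no balanced sequence can contain both without violating the defining balance inequality, so they are co-factors of no balanced sequence — and one checks there is no radix-monotone co-factorable chain between them either. Worse, the nonnegativity itself fails across such a pair: tracking $(\deg M_{11},\deg M_{12})$ for the running matrix $M=\mu_q(\text{prefix})$ under right multiplication by $\mu_q(\a)$ and $\mu_q(\b)$ yields $\deg\mu_q(\a\b\a\b\a)_{12}=12$ but $\deg\mu_q(\b\a\a\a\b)_{12}=11$, so $\mu_q(\b\a\a\a\b)_{12}-\mu_q(\a\b\a\b\a)_{12}$ has negative leading coefficient. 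Hence the union conjecture cannot be reduced to a single global monotonicity statement; telescoping disposes only of the co-factorably connected pairs.

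For the pattern-incompatible pairs I would turn this last computation into a tool: the degree of $\mu_q(w)_{12}$, obtained from the elementary recurrence $(\deg M_{11},\deg M_{12})\mapsto(\max(\deg M_{11}+2,\deg M_{12}+1),\max(\deg M_{11},\deg M_{12}))$ for appending $\a$ and an analogous rule for $\b$, is a \emph{sequence-independent} invariant encoding the run-length/clustering pattern of $w$, and it already separates $\a\b\a\b\a$ from $\b\a\a\a\b$. The target statement is then a dichotomy for any two distinct balanced words $u,v$: either they are joined by a radix-monotone co-factorable chain, so $\mu_q(u)_{12}\ne\mu_q(v)_{12}$ by the telescoped nonnegativity, or $\deg\mu_q(u)_{12}\ne\deg\mu_q(v)_{12}$, so they are separated outright (refining by finitely many leading and trailing coefficients if degree alone is insufficient). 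The crux, and the step I expect to be hardest, is to prove that equal-degree balanced words of the same length are always joined by such a chain; this demands new combinatorics of the whole balanced language $\Lcal(\Bcal)$, beyond the single-sequence setting in which the indistinguishable-asymptotic-pairs method behind the per-sequence theorem operates.
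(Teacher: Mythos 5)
This statement is \Cref{conj:q-analog-markoff-over-balanced-language}, which the paper itself leaves \emph{open}: the paper proves only the restriction to the language of a \emph{single} balanced sequence (\Cref{thm:main-result-intro}, \Cref{cor:main-corollary-intro-injective}), and its concluding section exhibits exactly the obstruction you found, e.g.\ $\a\b\b<_{radix}\b\a\a$ while $\mu_q(\b\a\a)_{12}-\mu_q(\a\b\b)_{12}=q-q^{2}-2q^{3}-2q^{4}-3q^{5}-2q^{6}-q^{7}$ has negative coefficients, and similarly for the Christoffel pair $\a\a\a\a\b$, $\a\b\b\b$. So there is no proof in the paper to compare against; the only question is whether your proposal closes the gap, and it does not.

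Your first two steps are sound and essentially rediscover what the paper records: telescoping along a radix-monotone chain of co-factorable pairs is valid (each step is an instance of \Cref{thm:main-result-intro}), and such chains cannot always exist because global radix-monotonicity fails on $\Lcal(\Bcal)$ (your pair $\a\b\a\b\a$, $\b\a\a\a\b$ with degrees $12$ and $11$ is a correct instance, and your degree recurrences are legitimate since all matrix entries have nonnegative coefficients, so no cancellation occurs). The genuine gap is the third step, which you yourself flag as the crux: the dichotomy asserting that two distinct balanced words of equal length are either joined by a radix-monotone co-factorable chain or else separated by the degree of $\mu_q(\cdot)_{12}$ (or, failing that, by ``finitely many leading and trailing coefficients''). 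Nothing in the proposal proves this, and it is where the entire difficulty of the conjecture lives: co-factorability is extremely restrictive (any two co-factors must have, for every $k$, the numbers of $\b$'s in their length-$k$ factors within distance one of each other), so the chain-connected classes are small, while the proposed separating invariants are coarse and come with no argument that they distinguish pattern-incompatible pairs; ``refine by more coefficients'' is a hope, not a mechanism. Note also that the cross-length case does not reduce to the equal-length case as smoothly as the telescoping picture suggests: from $u$ one can move monotonically inside one $\Lcal(s)$ up to \emph{some} word of length $|v|$, but that word need not be co-factorable with, or even radix-below, $v$, so the same unproven dichotomy is needed there too. Finally, any completion must exploit balancedness throughout, since the map is not injective on all of $\{\a,\b\}^*$ (the paper notes $\mu_q(\a\a\a\b\b\b)_{12}=\mu_q(\a\b\b\a\a\b)_{12}$). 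As it stands, your text is a reasonable research plan that reproduces the paper's own negative findings, but it is not a proof, and the conjecture remains open.
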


\Cref{conj:q-analog-markoff-over-balanced-language}
implies
\Cref{conj:q-analog-markoff} since the set of Christoffel words is a subset of $\Lcal(\Bcal)$.

\subsection{Main results}

In this article, 
we propose a result which progresses in the direction of
\Cref{conj:q-analog-markoff-over-balanced-language}.
More precisely, we prove that the map $w\mapsto\mu_q(w)_{12}$ is strictly increasing
with respect to the radix order on the language of a fixed balanced sequence.

Recall that the radix order is defined
for every $u,v\in\{\a,\b\}^*$ as
\[
    u<_{radix}v 
    \qquad
    \text{ if } 
    \qquad
\begin{cases}
    |u|<|v| \quad\text{ or }\\
    |u|=|v| \quad\text{ and } \quad u<_{lex} v.
\end{cases}
\]
Also we define a partial order $\prec$ on $\Z[q]$ as
\[
    f \prec g
    \quad
    \text{ if and only if }
    \quad
    f \neq g
    \text{ and }
    g - f \in \Z_{\geq0}[q].
\]

\begin{maintheorem}\label{thm:main-result-intro}
    Let $s\in\{\a,\b\}^\Z$ be a balanced sequence
    and $u,v\in\Lcal(s)$ be two factors in the language of $s$.
    If $u<_{radix}v$, then 
    $\mu_q(u)_{12} \prec \mu_q(v)_{12}$,
    i.e., $\mu_q(v)_{12} - \mu_q(u)_{12}$
    is a nonzero polynomial of indeterminate $q$ with nonnegative integer
    coefficients.
\end{maintheorem}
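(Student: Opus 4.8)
The plan is to reduce to consecutive factors and then to package the whole inequality as a positivity statement about pattern occurrences in a bi-infinite configuration. First, observe that since the four entries of $\mu_q(\a)$ and $\mu_q(\b)$ lie in $\Z_{\geq0}[q]$, every entry of $\mu_q(w)$ does too; in particular $\mu_q(w)_{12}\in\Z_{\geq0}[q]$, and the relation $\prec$ is transitive on such polynomials (a sum of two nonzero elements of $\Z_{\geq0}[q]$ is again a nonzero element of $\Z_{\geq0}[q]$). Because $<_{radix}$ well-orders $\{\a,\b\}^*$ and $\Lcal(s)$ contains factors of every length, the restriction of $<_{radix}$ to $\Lcal(s)$ is a well-order in which consecutive factors have lengths differing by at most one. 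Hence it suffices to prove $\mu_q(u)_{12}\prec\mu_q(v)_{12}$ when $v$ is the immediate $<_{radix}$-successor of $u$ in $\Lcal(s)$: either $|v|=|u|$ with $v$ the lexicographic successor of $u$ among length-$|u|$ factors of $s$, or $|v|=|u|+1$ with $u$ the largest and $v$ the smallest factor of their respective lengths. Writing $A_w=\mu_q(w)_{12}$ and $B_w=\mu_q(w)_{22}$, the recurrences $A_{\a w}=(q+q^2)A_w+B_w$ and $A_{\b w}=(q+2q^2+q^3+q^4)A_w+(1+q)B_w$ (with analogous ones for $B$) are the computational engine for comparing the values attached to nearby factors.

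The heart of the argument is the equal-length case, where the balanced hypothesis must be used essentially: lexicographically consecutive factors of a fixed length share a long common part and are interchanged according to the (bi)special factors of $s$. Following the strategy announced in the abstract, I would translate the family of inequalities $A_u\prec A_v$ over all consecutive pairs into a single \emph{Markoff property} of the sequence $s$, and then show that $s$ enjoys this property if and only if a canonical asymptotic pair $(x,y)\in\{\a,\b\}^{\Z}\times\{\a,\b\}^{\Z}$ built from $s$---two configurations differing only on the finite window governing the exchange---is \emph{indistinguishable}, i.e.\ every finite word occurs with the same multiplicity in $x$ and in $y$ once the finitely many discrepancies are summed. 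The $q$-weights carried by $\mu_q$ refine each such occurrence count into a coefficient of a polynomial, so that the coefficientwise domination $A_v-A_u\in\Z_{\geq0}[q]$ becomes exactly the indistinguishability of $(x,y)$. One then certifies positivity by invoking the characterization of indistinguishable asymptotic pairs for balanced (Sturmian) configurations, which guarantees that the pair produced by a lexicographic exchange is always indistinguishable.

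For the length-increase case I would first establish a monotonicity lemma under letter insertion, controlling the sign of $A_{\a w}-A_w=(q+q^2-1)A_w+B_w$ by an auxiliary invariant comparing the low-order terms of $B_w$ and $A_w$, and then compare $u$ and $v$ through a common factor, or else fold this case into the same asymptotic-pair comparison. Strictness $A_u\neq A_v$ follows whenever $u$ and $v$ have different numbers of $\b$'s from the degree formula $\deg A_w=2|w|_{\a}+4|w|_{\b}-3$, and otherwise from the fact that a genuine exchange produces a nontrivial asymptotic pair, hence a nonzero excess. The step I expect to be the main obstacle is the middle one: making precise the dictionary between a lexicographic exchange of factors of a balanced sequence and a concrete indistinguishable asymptotic pair, and proving that the excess of occurrences is nonnegative \emph{coefficient by coefficient} in $q$---not merely after the specialization $q=1$---which is precisely where the balanced hypothesis and the full strength of the asymptotic-pair characterization enter.
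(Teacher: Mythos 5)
Your skeleton---reduction to radix-consecutive pairs, an equal-length ``flip'' case, and a length-jump case between the largest factor of length $n$ and the smallest of length $n+1$---matches the paper's structure, but the heart of your argument has a genuine gap. You claim that the coefficientwise domination $\mu_q(v)_{12}-\mu_q(u)_{12}\in\Z_{\geq0}[q]$ ``becomes exactly the indistinguishability of $(x,y)$'' because the $q$-weights ``refine each occurrence count into a coefficient.'' No such dictionary exists or is constructed: indistinguishability is a purely combinatorial statement about cardinalities of occurrence sets $\occ_p(\cdot)$, and no interpretation of the coefficients of $\mu_q(w)_{12}$ as pattern-occurrence counts is known (the paper poses exactly this as an open question), so this step assumes what is to be proved. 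In the paper, indistinguishable asymptotic pairs are used only combinatorially: they yield \Cref{cor:hamiltonian-sturmian} (all $n+1$ factors of length $n$ occur in each of $\widetilde{w}\a\b w$ and $\widetilde{w}\b\a w$), whence \Cref{lem:cyclic-ordering-sturmian-factors} shows that lexicographically consecutive factors differ by a flip $\widetilde{x}\a\b y\mapsto\widetilde{x}\b\a y$ with $x,y$ prefixes of a common word. The positivity itself is pure matrix algebra that your proposal never formulates: the identity $\mu_q(\widetilde{u})\,D_q\,\mu_q(u)=q^nD_q$ with $D_q=\mu_q(\b\a)-\mu_q(\a\b)$ (\Cref{lem:ubau-uabu}), which crucially exploits the reversal symmetry of the flip positions, combined with the inductive fact that all entries of $\mu_q(\cdot)$ lie in $\Z_{\geq0}[q]$ (\Cref{lem:two-inequalities}, giving \Cref{prop:growing-on-flips-ab-ba}).

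The length-jump case is also aimed at the wrong comparison. The consecutive pair there is $(\b w,\;\a w a)$ with $a\in\{\a,\b\}$, and the paper proves $\mu_q(\b w)_{12}\prec\mu_q(\a w\a)_{12}\prec\mu_q(\a w\b)_{12}$ (\Cref{prop:bw-awa}) via the inductively maintained inequality $(q+q^2)m-(q^2+q^3+q^4)n+o-qp\in\Z_{\geq0}[q]\setminus\{0\}$ of \Cref{lem:two-inequalities}; your proposed lemma on $A_{\a w}-A_w=(q+q^2-1)A_w+B_w$ compares a word with a one-letter extension of itself, which is not the pair that occurs, and is not coefficientwise obvious anyway since $q+q^2-1$ has a negative coefficient. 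Two further problems: your degree formula $\deg \mu_q(w)_{12}=2|w|_\a+4|w|_\b-3$ fails for words ending in $\a$ (e.g.\ $\mu_q(\a\a)_{12}=1+q+q^2$ has degree $2$, not $1$); and for balanced sequences of type $(M_1)$ or $(M_2)$ (purely periodic, or generic Sturmian) there is no factorization $\widetilde{p}xyp$ and hence no asymptotic pair through $s$ at all---the paper handles these by replacing $s$ with a sequence $s'$ of type $(M_3)$ or $(M_4)$ satisfying $\Lcal(s)\subseteq\Lcal(s')$ (using \Cref{thm:balanced-4-cases}), a reduction your proposal omits.
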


As a consequence, we prove 
\Cref{conj:q-analog-markoff-over-balanced-language}
when restricted to the language of a given balanced sequence.

\begin{maincorollary}\label{cor:main-corollary-intro-injective}
    Let $s\in\{\a,\b\}^\Z$ be a balanced sequence.
    The map $u\mapsto\mu_q(u)_{12}$ is injective over the language $\Lcal(s)$.
\end{maincorollary}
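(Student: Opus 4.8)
The plan is to deduce this corollary immediately from \Cref{thm:main-result-intro}, exploiting the single structural fact that $<_{radix}$ is a \emph{total} order on $\{\a,\b\}^*$. Indeed, for any two distinct words $u,v\in\{\a,\b\}^*$ one compares first their lengths, and in case of equal length their lexicographic order (with $\a<\b$); since both the length comparison and $<_{lex}$ on words of a fixed length are total, exactly one of $u<_{radix}v$ or $v<_{radix}u$ holds. Because $\Lcal(s)\subseteq\{\a,\b\}^*$, this trichotomy restricts verbatim to the language of $s$.

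First I would fix two distinct factors $u,v\in\Lcal(s)$. By totality of the radix order I may assume, after possibly swapping the names of $u$ and $v$, that $u<_{radix}v$. Applying \Cref{thm:main-result-intro} then yields $\mu_q(u)_{12}\prec\mu_q(v)_{12}$. Unfolding the definition of the relation $\prec$, namely that $f\prec g$ requires in particular $f\neq g$, and specializing to $f=\mu_q(u)_{12}$ and $g=\mu_q(v)_{12}$, I obtain $\mu_q(u)_{12}\neq\mu_q(v)_{12}$. As this holds for every pair of distinct factors of $s$, the map $u\mapsto\mu_q(u)_{12}$ is injective on $\Lcal(s)$.

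There is no genuine obstacle at this stage: all the combinatorial and algebraic difficulty is already absorbed into \Cref{thm:main-result-intro}, and the corollary is a purely formal consequence of the strict monotonicity asserted there together with the fact that $<_{radix}$ is a strict total order. The only point one must not overlook is precisely the totality of $<_{radix}$, which guarantees that any two distinct factors are comparable and hence that the monotonicity statement applies to them in one direction or the other; no aperiodicity, balance, or further property of $s$ beyond what \Cref{thm:main-result-intro} already uses is needed here.
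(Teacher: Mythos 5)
Your proof is correct and is essentially identical to the paper's own argument: both fix distinct $u,v\in\Lcal(s)$, use the totality of $<_{radix}$ to assume without loss of generality that $u<_{radix}v$, apply \Cref{thm:main-result-intro} to get $\mu_q(u)_{12}\prec\mu_q(v)_{12}$, and conclude $\mu_q(u)_{12}\neq\mu_q(v)_{12}$ from the definition of $\prec$. Your extra emphasis on why $<_{radix}$ is a total order is a harmless elaboration of the paper's ``without loss of generality'' step.
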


Remark that \Cref{cor:main-corollary-intro-injective} can also be deduced from
Corollary 1 of \cite{LR2021} since two polynomials evaluated at $q=1$ are
distinct implies that the polynomials are distinct.
We also state a corollary of \Cref{thm:main-result-intro} when evaluating
polynomials at $q=\gamma$ for all positive real numbers $\gamma>0$
improving Corollary 1 from \cite{LR2021}.

\begin{maincorollary}\label{cor:main-corollary-intro-q>0}
    Let $s\in\{\a,\b\}^\Z$ be a balanced sequence.
    For every $\gamma>0$,
    the map 
    $\{\a,\b\}^*\to\R$ defined by
    $w\mapsto\mu_q(w)_{12}|_{q=\gamma}$ is strictly increasing and injective over the
    language $\Lcal(s)$ with respect to the radix order $<_{radix}$.
\end{maincorollary}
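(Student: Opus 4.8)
The plan is to deduce \Cref{cor:main-corollary-intro-q>0} directly from \Cref{thm:main-result-intro} by studying what happens to the sign of a polynomial in $\Z_{\geq 0}[q]$ under evaluation at a positive real number. First I would fix a balanced sequence $s$ and two factors $u,v\in\Lcal(s)$ with $u<_{radix}v$. By \Cref{thm:main-result-intro}, the difference $P(q):=\mu_q(v)_{12}-\mu_q(u)_{12}$ is a nonzero polynomial all of whose coefficients lie in $\Z_{\geq 0}$. The heart of the matter is then the following elementary observation, which I would state as a short lemma: if $P(q)=\sum_{i} c_i q^i$ is a nonzero polynomial with $c_i\in\Z_{\geq 0}$ for all $i$, then for every real $\gamma>0$ we have $P(\gamma)=\sum_i c_i\gamma^i>0$, since each summand $c_i\gamma^i$ is nonnegative and at least one coefficient $c_i$ is a strictly positive integer while $\gamma^i>0$.

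With that observation in hand, the evaluation map $q\mapsto\gamma$ is a ring homomorphism, so $P(\gamma)=\mu_q(v)_{12}|_{q=\gamma}-\mu_q(u)_{12}|_{q=\gamma}$, and the lemma gives
\[
    \mu_q(v)_{12}|_{q=\gamma}-\mu_q(u)_{12}|_{q=\gamma}=P(\gamma)>0.
\]
Hence $\mu_q(u)_{12}|_{q=\gamma}<\mu_q(v)_{12}|_{q=\gamma}$, which is exactly strict monotonicity of $w\mapsto\mu_q(w)_{12}|_{q=\gamma}$ with respect to $<_{radix}$ on $\Lcal(s)$. Injectivity is then immediate: a strictly increasing function with respect to a total order is injective, and $<_{radix}$ is a total order on $\{\a,\b\}^*$ (any two distinct words are comparable either by length or, at equal length, by the lexicographic order), so distinct factors $u\neq v$ in $\Lcal(s)$ are $<_{radix}$-comparable and therefore have distinct images.

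I anticipate that there is no genuine obstacle here: the statement is a corollary precisely because all the combinatorial and algebraic difficulty has already been absorbed into \Cref{thm:main-result-intro}, which guarantees not merely that $P\neq 0$ but the much stronger sign condition $P\in\Z_{\geq 0}[q]\setminus\{0\}$. The only point requiring a line of care is that nonnegativity of coefficients is \emph{not} preserved by evaluation in general unless $\gamma>0$, which is exactly the hypothesis; for $\gamma\le 0$ the conclusion can fail, so the restriction to $\gamma>0$ is essential and should be flagged. Finally, I would remark that taking $\gamma=1$ recovers the integer-valued statement and in particular reproves \Cref{cor:main-corollary-intro-injective}, and that the improvement over Corollary~1 of \cite{LR2021} is precisely the upgrade from the single value $q=1$ to all positive reals $\gamma$, made possible by the coefficientwise positivity in \Cref{thm:main-result-intro}.
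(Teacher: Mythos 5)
Your proposal is correct and follows exactly the paper's own argument: apply \Cref{thm:main-result-intro} to get that $\mu_q(v)_{12}-\mu_q(u)_{12}$ is a nonzero polynomial with nonnegative coefficients, observe that such a polynomial evaluates to a strictly positive number at any $\gamma>0$, and conclude strict monotonicity and hence injectivity with respect to the total order $<_{radix}$. The only difference is presentational — you isolate the evaluation observation as an explicit lemma and spell out why strict monotonicity over a total order gives injectivity — but the mathematical content is identical to the paper's proof.
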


The proof of Theorem~\ref{thm:main-result-intro} follows the same idea  
as the proof that the map $\{\a,\b\}^*\to\Z_{\geq0}:w\mapsto\mu(w)_{12}$ 
is strictly increasing (for the radix order) over the factors of a Christoffel word
\cite{lapointe_these_2020,LR2021}.
When listing the conjugates of a Christoffel word in lexicographic order,
only a flip of two letters happens between consecutive conjugates
\cite[Corollary 5.1]{MR2197281}.
This observation was done in the context of the Burrows-Wheeler transform
\cite{MR1976388}.
Recall that Burrows-Wheeler transform of a finite word $w$ is obtained from
$w$ by first listing the conjugates of $w$ in lexicographic order and
then concatenating the final letters of the conjugates in this
order, see \cite{MR2364562}.
When listing lexicographically the $n+1$ factors of a balanced language, 
at most two letters are changed from one word to the next (see
Lemma~\ref{lem:cyclic-ordering-sturmian-factors}). This allows to prove
Theorem~\ref{thm:main-result-intro} for the language of a balanced biinfinite sequence.

Table~\ref{T:fibo_factor} in the appendix shows the
values of $\mu(w)_{12}$ and $\mu_q(w)_{12}$ for the small factors in the
Fibonacci word. We observe that the coefficients of the polynomials over $q$
are increasing from one row to the next.
To each factor $w$ corresponds a function $\gamma\mapsto\mu_q(w)_{12}|_{q=\gamma}$.
The graph of these functions is shown on the interval $0<\gamma<100$ 
in \Cref{fig:log_polynomials}
for every short factors $w$ in Fibonacci word.


\begin{figure}[h]
\begin{center}
    \includegraphics{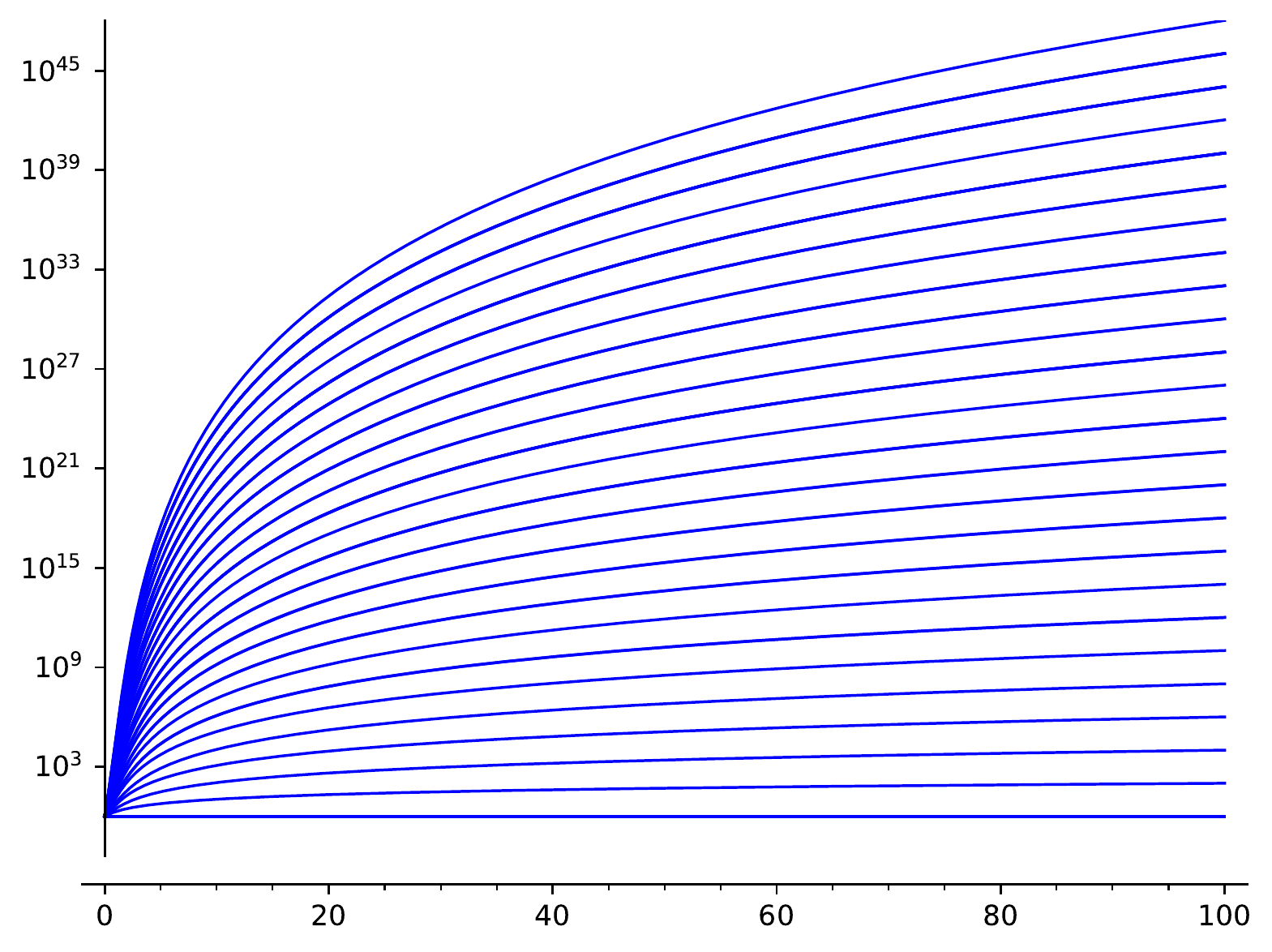}
\end{center}
    \caption{The graph of the curves 
    $(\gamma,\mu_q(w)_{12}|_{q=\gamma})$ for $0\leq\gamma\leq 100$ for all
    55 factors $w$ of length $|w|<10$ in the Fibonacci word.
    The 55 polynomials $\mu_q(w)_{12}$ shown in the appendix
    are ranging from degree 0 to 24 explaining why we see 
    25 curves in the figure instead of 55.
    A consequence of \Cref{thm:main-result-intro}
    is that the 55 curves do not intersect when $\gamma>0$.
    The difference between two polynomials
    of the same degree is very small
    and can not be distinguished (the $y$ axis uses a logarithmic scale).
    For example, there are four factors of length 9
    whose images under $w\mapsto\mu_q(w)_{12}$ are
    polynomials of degree 23 whose pairwise
    difference is a polynomial of degree 20, 18 or 14, see
    \Cref{T:fibo_factor}.}
    \label{fig:log_polynomials}
\end{figure}

This article is structured as follows. 
\Cref{sec:balanced} gathers many equivalent characterizations of balanced
sequences including properties introduced by Markoff 
\cite{M1879,M1882}.
It is then related to indistinguishable asymptotic pairs \cite{MR4227737}
which we state as \Cref{thm:equivalence-indistinguishable-pair}
since it provides a link between an old notion of Markoff with a recent one.
Indistinguishable asymptotic pairs naturally
provides two compact representations of the language of
length $n$ of a balanced sequence as the factors appearing in two words of
length $2n$, see \Cref{cor:hamiltonian-sturmian}.
This is used to show that only small local changes appear between a factor and
the next factor according to the radix order over the language of a balanced
sequence.
In \Cref{sec:increasing-small-changes}, we show that the map
$w\mapsto\mu_q(w)_{12}$ is increasing over the listed small local changes.
The proof of 
\Cref{thm:main-result-intro}
is done in
\Cref{sec:proof-main-results}.
In \Cref{sec:conclusion}, we conclude with few examples illustrating
that
\Cref{thm:main-result-intro}
can unfortunately not be extended to the language of all balanced sequences
using the radix order.

\subsection*{Acknowledgements}
This work was initiated during the 
\emph{Journées de combinatoire de Bordeaux}
(\href{https://jcb2021.labri.fr/}{JCB 2021})
held online February $1^{st}$--$4^{th}$ 2021, after the talks
made by Sophie Morier-Genoud and Mélodie Lapointe.
We are thankful to the anonymous referees for their insightful comments leading
to considerable improvements in the presentation of this article.

\section{Balanced sequences}
\label{sec:balanced}

\subsection{Definition and example}


Let $s\in\Sigma^\Z$ be a sequence over a finite set $\Sigma$.
The language of $s$ is $\Lcal(s)=\{s_ks_{k+1}\cdots s_{k+n-1}\mid k\in\Z, n\geq 0\}\subset\Sigma^*$
is the set of subwords (or factors) occurring in $s$.
The \emph{reversal} of a finite word $w = (w_i)_{1 \leq i \leq n}$ is $\widetilde{w} = (w_{n+1-i})_{1 \leq i \leq n}$. Similarly, the \emph{reversal} of a right infinite sequence $s = (s_i)_{i \in \Z_{\geq 0}}$ is the left infinite sequence $\widetilde{s} = (s_{-i})_{i \in \Z \leq 0}$.
\begin{definition}
A sequence $s\in\Sigma^\Z$ is \emph{balanced} if for every positive integer $n$,
for every $u,v\in\Lcal(s)\cap\Sigma^n$
and every letter $a\in\Sigma$,
the number of $a$'s occurring in $u$ and $v$ differ by at most 1.
\end{definition}
For example, the right-infinite Fibonacci word 
\[
    F =\a\b\a\a\b\a\b\a\a\b\a\a\b\a\b\a\a\b\a\b \ldots \in\Sigma^{\Z_{\geq0}}
\]
over the alphabet $\Sigma=\{\a,\b\}$ is 
such that both
\[
\widetilde{F}\cdot \a\b\cdot F = 
\ldots
\b\a\b\a\a\b\a\b\a\a\b\a\a\b\a\b\a\a\b\a\cdot
\a\b\cdot
\a\b\a\a\b\a\b\a\a\b\a\a\b\a\b\a\a\b\a\b
\ldots
\]
and
\[
\widetilde{F}\cdot \b\a\cdot F = 
\ldots
\b\a\b\a\a\b\a\b\a\a\b\a\a\b\a\b\a\a\b\a\cdot
\b\a\cdot
\a\b\a\a\b\a\b\a\a\b\a\a\b\a\b\a\a\b\a\b
\ldots
\]
are balanced sequences.
This observation is illustrated for factors of length up to six in the
following table.
\[
    \begin{array}{llp{13mm}p{13mm}}
        n & \Lcal(\widetilde{F}\a\b F)\cap\Sigma^n  & number of $\a$'s& number of $\b$'s\\
        \hline
        0 & \{\varepsilon\}                              & 0            & 0            \\
        1 & \{\a, \b\}                                     & 0\text{ or }1& 0\text{ or }1\\
        2 & \{\a\a, \a\b, \b\a\}                               & 1\text{ or }2& 0\text{ or }1\\
        3 & \{\a\a\b, \a\b\a, \b\a\a, \b\a\b\}                       & 1\text{ or }2& 1\text{ or }2\\
        4 & \{\a\a\b\a, \a\b\a\a, \a\b\a\b, \b\a\a\b, \b\a\b\a\}             & 2\text{ or }3& 1\text{ or }2\\
        5 & \{\a\a\b\a\a, \a\a\b\a\b, \a\b\a\a\b, \a\b\a\b\a, \b\a\a\b\a, \b\a\b\a\a\} & 3\text{ or }4& 1\text{ or }2\\
        6 & \{\a\a\b\a\a\b, \a\a\b\a\b\a, \a\b\a\a\b\a, \a\b\a\b\a\a, \b\a\a\b\a\a, \b\a\a\b\a\b, \b\a\b\a\a\b\}
                                                         & 3\text{ or }4& 2\text{ or }3\\
    \end{array}
\]
From the table, we confirm that the number of $\a$'s and the number of $\b$'s
occurring in two factors of the same length differ by at most 1.

\subsection{The Markoff property} 

It is worth recalling that balanced sequences appeared in the work of Markoff
himself \cite{M1879,M1882} under an equivalent condition, called Markoff property (M) in
\cite{MR2242616}.

\begin{definition}
    {\rm\cite{MR2242616}}
We say that 
a biinfinite word
$s\in\{\a,\b\}^\Z$
satisfies the \emph{Markoff property} if for any factorization
$s=uxyv$, where $\{x,y\}=\{\a,\b\}$, one has
\begin{itemize}
    \item either $\widetilde{u} = v$,
    \item or there is a factorization $u=u'ym$, $v=\widetilde{m}xv'$.
\end{itemize}
\end{definition}
The Markoff property is related to the Markoff spectrum.
Let $U=(a_i)_{i\in\Z}$ be a biinfinite sequence such that $a_i$ are positive integers. 
For $i\in\Z$, let
\[
    \lambda_i(U) = a_i + [0;a_{i+1},a_{i+2},\dots]
                       + [0;a_{i-1},a_{i-2},\dots].
\]
The \emph{Markoff supremum of $U$} is
\[
    M(U) = \sup_{i\in\Z}\lambda_i(U).
\]
Two results of Markoff can be stated in terms of Christoffel
words and balanced sequences as follows where
$\sigma$ is the substitution from $\{\a,\b\}^*$ to $\{1,2\}^*$ defined by $\a\mapsto 11$ and $\b\mapsto 22$.
It provides an equivalence between sequences satisfying the
Markoff property and sequences of positive integers such that the Markoff
supremum is at most~3.
The equivalence between sequences satisfying the Markoff property and
balanced sequences was not proved by Markoff himself:
it was stated without proof in \cite{MR1010419} and a proof was provided in
\cite{MR2242616}.


\begin{theorem}[Markoff]
    {\rm\cite[Theorem 3.1 and 7.1]{MR2242616}}
Let $s\in\{\a,\b\}^\Z$ be a biinfinite word. The following conditions are equivalent:
    \begin{itemize}
        \item $s$ satisfies the Markoff property,
        \item $s$ is balanced,
        \item $M(\sigma(s))\leq 3$.
    \end{itemize}
\end{theorem}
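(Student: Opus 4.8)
The plan is to prove the two equivalences \emph{balanced} $\Leftrightarrow$ \emph{Markoff property} and \emph{balanced} $\Leftrightarrow$ $M(\sigma(s))\le 3$ separately, the quantitative continued-fraction part being the real obstacle. The common combinatorial tool I would establish first is the classical \emph{palindrome obstruction to balance}: a biinfinite word $s$ is unbalanced if and only if there is a palindrome $p$ (possibly empty) with both $\a p\a\in\Lcal(s)$ and $\b p\b\in\Lcal(s)$. The ``if'' part is immediate by counting, since $\a p\a$ and $\b p\b$ have equal length but their numbers of $\a$'s differ by $2$; the ``only if'' part is the usual shortest-witness argument applied to a pair of equal-length factors whose letter counts differ by at least $2$.

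For \emph{balanced} $\Rightarrow$ \emph{Markoff property} I would argue by contraposition and analyze a pivot $s=uxyv$ with $\{x,y\}=\{\a,\b\}$ at which the property fails. Set coordinates so that $x=s_0$ and $y=s_1$, and compare the left ray $\widetilde u=s_{-1}s_{-2}\cdots$ with the right ray $v=s_2s_3\cdots$. If their longest common prefix is infinite we are in the case $\widetilde u=v$, so the property holds; hence failure forces a finite common prefix, say of length $k$, after which $\widetilde u$ and $v$ disagree. A short case check shows that the second alternative of the Markoff property (the factorization $u=u'ym$, $v=\widetilde m xv'$) holds precisely when this first disagreement has $s_{-(k+1)}=y$ and $s_{k+2}=x$; so at a failing pivot the opposite orientation occurs and the window around the pivot reads
\[
    s_{-(k+1)}\cdots s_{k+2}=x\,\widetilde m\,x\,y\,m\,y,
    \qquad m=s_2\cdots s_{k+1},
\]
where $\widetilde m=s_{-k}\cdots s_{-1}$ by the agreement along the common prefix. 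Then $x\widetilde m x$ and $y m y$ are two factors of the same length $k+2$ whose numbers of $x$'s differ by exactly $2$, so $s$ is unbalanced. For the converse, \emph{Markoff property} $\Rightarrow$ \emph{balanced}, I would take the contrapositive, invoke the palindrome obstruction to get a shortest palindrome $p$ with $\a p\a,\b p\b\in\Lcal(s)$, and locate a pivot whose first outward mismatch has the forbidden orientation; the minimality of $p$ is what pins down the orientation. I expect this converse to be the most delicate of the purely combinatorial steps.

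For \emph{balanced} $\Leftrightarrow$ $M(\sigma(s))\le 3$, write $U=\sigma(s)=(a_i)_{i\in\Z}$ with $a_i\in\{1,2\}$; since $\sigma$ doubles every letter, all maximal runs of $1$'s and of $2$'s in $U$ have even length. Each term is $\lambda_i(U)=a_i+t_i^++t_i^-$ with tails $t_i^\pm\in(0,1)$, so $\lambda_i<3$ automatically whenever $a_i=1$, and the bound $M(U)\le3$ reduces to the condition $t_i^++t_i^-\le1$ at every position with $a_i=2$. I would then translate $t_i^++t_i^-\le1$ into a statement about the lengths of the runs of $1$'s neighbouring a block of $2$'s, and show, via the continued-fraction estimates underlying Markoff's theorem, that this family of local inequalities holds at all $2$-positions exactly when the run-length structure of $s$ satisfies the balance condition, equivalently when no palindrome obstruction is present. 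This comparison of the doubly-infinite continued-fraction value with the balance of the run-length sequence is where the quantitative heart of the theorem lies, and I expect it to be the main obstacle: the extremal value $3$ is approached, so the estimates must be carried out carefully rather than by crude bounding.
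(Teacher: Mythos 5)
First, a point of comparison: the paper does not prove this theorem at all. It is quoted as a known result, attributed to Theorems 3.1 and 7.1 of \cite{MR2242616}, and the paper explicitly records that the equivalence between the Markoff property and balance was stated without proof in \cite{MR1010419} and first proved in \cite{MR2242616}. So there is no proof in the paper to measure your attempt against; it must stand on its own. Judged that way, exactly one of the three pieces is actually proved: \emph{balanced $\Rightarrow$ Markoff property}. That argument is correct and essentially complete: at a pivot $s_0s_1=xy$ where the property fails, the first outward mismatch at distance $k$ must show $s_{-(k+1)}=x$ and $s_{k+2}=y$, the window reads $x\widetilde{m}xymy$, and $x\widetilde{m}x$, $ymy$ are equal-length factors whose numbers of $x$'s differ by $2$. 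Together with the Coven--Hedlund palindrome lemma (classical, and your sketch of it is fine), this direction is sound.

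The other two pieces are genuine gaps, not proofs. For \emph{Markoff property $\Rightarrow$ balanced} you offer only the hope that minimality of the palindrome $p$ ``pins down the orientation'' at some pivot. The difficulty you are waving at is real: failure of the Markoff property is not locally certifiable, so no window around the unbalanced pair determines which pivot fails. Concretely, take $p=\varepsilon$, so $\a\a$ and $\b\b$ both occur, and look at a minimal window containing both; it has the form $\a\a(\b\a)^k\b\b$. For $k\geq 1$ every pivot strictly inside this window \emph{satisfies} the Markoff property (each interior $\b\a$ or $\a\b$ pivot has an immediate, correctly oriented mismatch), while at the two boundary $\a\b$ pivots the first mismatch lies outside the window. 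So the failing pivot exists but its location depends on the whole sequence, and the global induction that exploits minimality of $p$ — the delicate part that \cite{MR2242616} is devoted to — is exactly what is missing. For \emph{balanced $\Leftrightarrow M(\sigma(s))\leq 3$}, your reduction to the inequalities $t_i^++t_i^-\leq 1$ at positions with $a_i=2$ is correct but is the trivial step; translating that family of inequalities into the combinatorial condition on the word (in both directions, with the bound $3$ attained in the limit) \emph{is} Markoff's theorem, and you dispose of it by invoking ``the continued-fraction estimates underlying Markoff's theorem,'' which is circular. What is needed there, and absent, is the alternating (lexicographic) comparison rule for continued fractions and the case analysis showing these inequalities hold at every $2$-position precisely when the doubled sequence satisfies the Markoff property. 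As it stands, the proposal proves one implication and outlines, without argument, the rest.
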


The Markoff supremum of a purely periodic balanced sequence can be computed
from the Markoff number associated to the Christoffel word which is a period of
the sequence.

\begin{theorem}[Markoff]
    {\rm\cite[Theorem 6.2.1]{MR3887697}}
    Let $w$ be some lower Christoffel word associated with Markoff number
    $m=\mu(w)_{12}$.
    Let $s$ be the biinfinite sequence $\leftidx{^\infty}\sigma(w)^\infty$.
    Then $M(s)=\sqrt{9-\frac{4}{m^2}}$.
\end{theorem}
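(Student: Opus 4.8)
The statement is Markoff's, so my plan is to reconstruct it from the continued-fraction machinery already in place. Since $s={}^\infty\sigma(w)^\infty$ is purely periodic, the map $i\mapsto\lambda_i(s)$ is periodic, so $M(s)$ is a maximum over one period rather than a mere supremum, and every $\lambda_i$ is a genuine quadratic irrational. The starting observation is that $\sigma$ doubles letters, so the continued-fraction matrices of $\sigma(\a)=11$ and $\sigma(\b)=22$ are $\left(\begin{smallmatrix}1&1\\1&0\end{smallmatrix}\right)^2=\left(\begin{smallmatrix}2&1\\1&1\end{smallmatrix}\right)=\mu(\a)$ and $\left(\begin{smallmatrix}2&1\\1&0\end{smallmatrix}\right)^2=\left(\begin{smallmatrix}5&2\\2&1\end{smallmatrix}\right)=\mu(\b)$. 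Hence the product of the continued-fraction matrices over one period of $s$ is exactly $\mu(w)\in\SL_2(\Z)$, and this is what ties the word-combinatorial side to the arithmetic side.

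First I would express each $\lambda_i$ as the gap between the two roots of an integral binary quadratic form. Writing $\theta_i=[\overline{a_i,a_{i+1},\dots}]$ for the forward purely periodic continued fraction starting at position $i$, and $\eta_i=[0;a_{i-1},a_{i-2},\dots]$ for the backward one, Galois' theorem on purely periodic continued fractions identifies $-\eta_i$ with the algebraic conjugate $\bar\theta_i$ of $\theta_i$: the reversed period appearing in Galois' theorem matches the backward sequence precisely because of periodicity. Therefore $\lambda_i=\theta_i+\eta_i=\theta_i-\bar\theta_i=\sqrt{D}/a_i$, where $a_i>0$ is the leading coefficient of the primitive integral form fixed by the cyclically shifted period matrix at position $i$, and $D$ is the common discriminant of all these $\GL_2(\Z)$-equivalent forms. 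The discriminant is read off from the trace of the period matrix as $D=\mathrm{tr}(\mu(w))^2-4$.

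The next step is the identity $\mathrm{tr}(\mu(w))=3\,\mu(w)_{12}=3m$, which converts the trace into the Markoff number. I would prove it by induction along the Christoffel tree using the Fricke trace identity $\mathrm{tr}(AB)+\mathrm{tr}(AB^{-1})=\mathrm{tr}(A)\,\mathrm{tr}(B)$, with base cases $\mathrm{tr}(\mu(\a))=3=3\cdot 1$ and $\mathrm{tr}(\mu(\b))=6=3\cdot 2$; the same induction simultaneously recovers the fact that $(\mu(u)_{12},\mu(uv)_{12},\mu(v)_{12})$ is a Markoff triple and that the three traces are three times the corresponding off-diagonal entries. This yields $D=9m^2-4$, hence $\lambda_i=\sqrt{9m^2-4}/a_i$ for every $i$.

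It remains to evaluate
\[
    M(s)=\max_i\lambda_i=\frac{\sqrt{9m^2-4}}{\min_i a_i},
\]
that is, to show $\min_i a_i=m$. This is the main obstacle and is the genuinely deep part of Markoff's theory. The numbers $a_i$ are the leading coefficients of the cycle of reduced indefinite forms equivalent to the Markoff form of discriminant $9m^2-4$, and their minimum equals the arithmetic minimum $\min_{v\neq 0}\lvert f(v)\rvert$ of that form. The equality $\min_{v\neq 0}\lvert f(v)\rvert=m$ is exactly Markoff's characterization of the forms whose minimum places them in the part of the Markoff spectrum below $3$: the admissible biinfinite sequences over $\{1,2\}$ are precisely the balanced ones (by the Theorem of Markoff stated above), and along such a sequence the local minima of $\lvert f\rvert$ are governed by the Markoff triple attached to $w$, the global minimum being its maximal entry $m$. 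Granting this input, $M(s)=\sqrt{9m^2-4}/m=\sqrt{9-\tfrac{4}{m^2}}$, as claimed.
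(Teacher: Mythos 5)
The paper does not prove this theorem itself; it cites it from Reutenauer's book \cite[Theorem 6.2.1]{MR3887697}, so your attempt can only be judged on its own merits. Your setup is correct and well chosen: the continued-fraction matrices of the doubled letters $\symb{1}\symb{1}$ and $\symb{2}\symb{2}$ are indeed $\mu(\a)$ and $\mu(\b)$, Galois' theorem correctly identifies the backward tail with $-\bar\theta_i$, the formula $\lambda_i=\theta_i-\bar\theta_i=\sqrt{D}/c_i$ (with $c_i$ the leading coefficient of the form fixed by the period matrix $M_i$ at position $i$ --- note you recycle the symbol $a_i$, which already denotes the partial quotients) is right because the period $2|w|$ is even so $\det M_i=1$, and the trace identity $\mathrm{tr}(\mu(w))=3\mu(w)_{12}$ for Christoffel words is a genuine classical fact (Cohn) that your Fricke induction can deliver. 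The problem is the last step. After reducing the theorem to $\min_i c_i=m$, you justify this equality by appealing to ``Markoff's characterization of the forms whose minimum places them in the part of the Markoff spectrum below $3$.'' That appeal is essentially circular: the statement $\min_i c_i=m$ is, via your own chain of equalities, exactly equivalent to $M(s)=\sqrt{9-4/m^2}$, i.e.\ to the theorem being proved; and the ``characterization'' you invoke is the deep direction of Markoff's theory, which is both unproved here and not actually needed. A proof cannot outsource precisely the claim it set out to establish.

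The gap is fillable by an elementary computation that stays inside the framework of this paper, and it is worth recording why. The period matrices $M_i$ come in two kinds. At even positions, $M_{2j}=\mu(w_j)$ for $w_j$ a conjugate of $w$, and since $\mu(\a),\mu(\b)$ are symmetric one has $\mu(u)^{T}=\mu(\widetilde{u})$, so $(M_{2j})_{21}=\mu(\widetilde{w_j})_{12}$, again the value of $\mu(\cdot)_{12}$ at a conjugate of $w$. At odd positions, $M_{2j+1}=X\mu(u)X$ where $X\in\left\{\left(\begin{smallmatrix}1&1\\1&0\end{smallmatrix}\right),\left(\begin{smallmatrix}2&1\\1&0\end{smallmatrix}\right)\right\}$ satisfies $X^2=\mu(x)$ and $xu$ is a conjugate of $w$; a direct check gives $(X\mu(u)X)_{21}=\mu(ux)_{12}$, once more a conjugate value. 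Hence $\{c_i\}_i=\{\mu(w')_{12}\colon w'\text{ conjugate of }w\}$, and it remains to see that this set has minimum $\mu(w)_{12}=m$. That follows because the lower Christoffel word is the lexicographically smallest of its conjugates and $\mu(\cdot)_{12}$ is strictly increasing along the lexicographic ordering of conjugates: consecutive conjugates differ by a flip $\widetilde{x}\,\a\b\,y\mapsto\widetilde{x}\,\b\a\,y$, which is the $q=1$ case of Proposition~\ref{prop:growing-on-flips-ab-ba} (or Lapointe--Reutenauer \cite{LR2021}). With this replacement for your black box, the rest of your argument goes through and yields $M(s)=\sqrt{9m^2-4}/m=\sqrt{9-\tfrac{4}{m^2}}$.
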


\subsection{Mechanical sequences}

It is known that right-infinite aperiodic balanced sequences correspond to
mechanical sequences \cite{MR0000745} which are binary encodings of irrational
rotations, see the chapters \cite[Chapter 6]{MR1970385}, 
\cite[Chapter 2]{MR1905123} and \cite[Chapter 9]{MR1997038}.
A biinfinite balanced sequence can also be periodic and in this case expressed in terms of Christoffel words.
To be more precise, let $\alpha\in [0,1]$ and $\rho\in\R$ and consider
the \emph{lower} and \emph{upper mechanical sequences} $s_{\alpha,\rho}$ and 
$s'_{\alpha,\rho}$ 
with \emph{slope} $\alpha$ and \emph{intercept} $\rho$
given respectively by
\[
\begin{array}{rccl}
s_{\alpha,\rho}:&\Z & \to & \{\symb{0},\symb{1}\}\\
&n & \mapsto &\lfloor\alpha(n+1)+\rho\rfloor-\lfloor\alpha n+\rho\rfloor
\end{array}
\quad
\text{ and }
\quad
\begin{array}{rccl}
s'_{\alpha,\rho}:&\Z & \to & \{\symb{0},\symb{1}\}\\
&n & \mapsto &\lceil\alpha(n+1)+\rho\rceil-\lceil\alpha n+\rho\rceil.
\end{array}
\]
When $\alpha$ is rational, the sequences
$s_{\alpha,\rho}$ and $s'_{\alpha,\rho}$ are periodic 
and their period corresponds to a Christoffel word \cite{MR2464862}.
When $\alpha$ is irrational, then $s_{\alpha,\rho}$ and $s'_{\alpha,\rho}$ 
are not periodic.
    It is clear that if $\rho-\rho'$ is an integer, then
    $s_{\alpha,\rho}=s_{\alpha,\rho'}$ and
    $s'_{\alpha,\rho}=s'_{\alpha,\rho'}$. Thus we may always assume
    $0\leq\rho<1$.
    Moreover,
    if $\Z\cap\alpha\Z+\rho=\varnothing$ then
    $s_{\alpha,\rho}=s'_{\alpha,\rho}$.

\subsection{Four classes of balanced sequences}

    Biinfinite balanced sequences can be split into four different types of
    sequences.
    Reutenauer proposed the following refinement of the Markoff property
    \cite{MR2242616} which was restated in \cite{MR2387854} as follows.
    If a biinfinite sequence $u\in\{\a,\b\}^\Z$ satisfies the Markoff property, then it falls into exactly one of the following classes:
\begin{itemize}
    \item [$(M_1)$] 
        $u$ cannot be written as $u=\widetilde{p}xyp$
        where $\{x,y\}=\{\a,\b\}$
        and
        the lengths of the Christoffel words occurring
        in $u$ are bounded;
    \item [$(M_2)$] 
        $u$ cannot be written as $u=\widetilde{p}xyp$ 
        where $\{x,y\}=\{\a,\b\}$
        and
        the lengths of the Christoffel words occurring
        in $u$ are unbounded;
    \item [$(M_3)$] $u$ has a unique factorization $u=\widetilde{p}xyp$
        where $\{x,y\}=\{\a,\b\}$;
    \item [$(M_4)$] $u$ has at least two factorizations $u=\widetilde{p}xyp$
        where $\{x,y\}=\{\a,\b\}$.
\end{itemize}

Morse and Hedlund gave a classification of balanced biinfinite sequences
into three classes (periodic, Sturmian, skew) \cite{MR0000745}.
Since the Sturmian case naturally splits into two, Reutenauer proposed the
following four classes $(MH_i)_{i\in\{1,2,3,4\}}$ 
and proved their equivalence with the $(M_i)$.

\begin{theorem}\label{thm:balanced-4-cases}
    {\rm\cite[Theorem 6.1]{MR2242616}}
    Let $u\in\{\a,\b\}^\Z$ be a balanced sequence. 
    For every $i\in\{1,2,3,4\}$,
    $u$ satisfies $(M_i)$ if and only if
    $u$ satisfies $(MH_i)$
    where
\begin{itemize}
    \item [$(MH_1)$] $u$ is a purely periodic word
        $\leftidx{^\infty}w^\infty$ for some Christoffel word $w$,
    \item [$(MH_2)$] $u$ is a generic aperiodic Sturmian word, 
        i.e., $u=s_{\alpha,\rho}=s'_{\alpha,\rho}$ 
        for some $\alpha\in[0,1]\setminus\Q$ and
        $\rho\in\R$ such that $\Z\cap\alpha\Z+\rho=\varnothing$.
    \item [$(MH_3)$] $u$ is a characteristic aperiodic Sturmian word,
        i.e., $u=s_{\alpha,\rho}$ or $u=s'_{\alpha,\rho}$ 
        for some $\alpha\in[0,1]\setminus\Q$ and
        $\rho\in\R$ such that $\Z\cap\alpha\Z+\rho\neq\varnothing$.
    \item [$(MH_4)$] $u$ is an ultimately periodic word but not
        purely periodic, i.e.,
        $u=\cdots xxyxx \cdots$ or $u=\cdots (ymx)(ymx)(ymy)(xmy)(xmy)\cdots$
        where $\{x,y\}=\{\a,\b\}$ and $\a m\b$ is a Christoffel word.
\end{itemize}
\end{theorem}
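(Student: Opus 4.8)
The plan is to establish the four implications $(MH_i)\Rightarrow(M_i)$ and then to close the equivalence with a partition argument. The four conditions $(MH_i)$ are mutually exclusive and cover all balanced sequences, by the Morse--Hedlund trichotomy together with the splitting of the Sturmian case into generic and characteristic. The four conditions $(M_i)$ likewise form a partition: the alternative ``$u$ can / cannot be written as $\widetilde{p}xyp$'' is exclusive and exhaustive, refined on one side by the bounded/unbounded dichotomy on the lengths of Christoffel factors and on the other by the unique/non-unique dichotomy on the number of such factorizations. Since both families partition the balanced sequences, once $(MH_i)\Rightarrow(M_i)$ holds for every $i$ the reverse implications follow automatically. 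The first technical step is to reformulate the factorization: writing $u=(u_n)_{n\in\Z}$, a factorization $u=\widetilde{p}xyp$ whose central block $xy$ sits at positions $j,j+1$ is the same as a \emph{flip center} at $j+\tfrac12$, meaning $u_j\neq u_{j+1}$ and $u_{j-k}=u_{j+1+k}$ for all $k\geq1$. Thus ``$u$ can / cannot be written as $\widetilde{p}xyp$'' becomes ``$u$ has / has no flip center,'' and the distinction between $(M_3)$ and $(M_4)$ counts flip centers.

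For the two Sturmian cases I would argue dynamically. Encode $u=s_{\alpha,\rho}$ (or $s'_{\alpha,\rho}$) as the coding of the orbit $(\{\alpha n+\rho\})_{n\in\Z}$ of the irrational rotation by $\alpha$, relative to the partition of $\R/\Z$ with break points $0$ and $1-\alpha$. A flip center of $u$ then corresponds to a reflection $t\mapsto c-t$ of the circle that preserves the orbit and the partition, and such a reflection preserves the pair of break points exactly when the orbit meets a break point, i.e. when $\Z\cap(\alpha\Z+\rho)\neq\varnothing$ --- precisely the characteristic condition separating $(MH_3)$ from $(MH_2)$. In the generic case $(MH_2)$ the orbit avoids every break point, so there is no flip center, yielding $(M_2)$; the Christoffel factors are unbounded because the denominators of the convergents of $\alpha$ realise arbitrarily long Christoffel factors. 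In the characteristic case $(MH_3)$ a break point is attained, producing a flip center, and since an irrational rotation visits each point at most once, the break point is hit for a single index, giving a \emph{unique} flip center and hence $(M_3)$.

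The remaining cases are read off from explicit shapes. For a purely periodic $u=\leftidx{^\infty}w^\infty$ with $w=\a m\b$ a Christoffel word (so $m$ is a palindrome), only finitely many factors of each length occur and the Christoffel factors are the ancestors of $w$ in the Christoffel tree, all of length at most $|w|$, hence bounded; moreover a flip center combined with the least period would force the reversed period to coincide with the period up to a single central flip, contradicting primitivity of the Christoffel period, so no flip center exists and $u$ satisfies $(M_1)$. For a skew word $(MH_4)$ I would exhibit two distinct flip centers directly from the normal forms: in $u=\cdots xxyxx\cdots$ the two positions immediately on either side of the lone $y$ are flip centers, and in $u=\cdots(ymx)(ymx)(ymy)(xmy)(xmy)\cdots$ the two positions flanking the central $ymy$ block are flip centers; thus $u$ admits at least two factorizations $u=\widetilde{p}xyp$ and satisfies $(M_4)$.

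The main obstacle I expect is the dynamical correspondence of the second paragraph: making precise that a flip center of the symbolic sequence is equivalent to a partition-preserving reflection of the rotation orbit, with the correct matching between the position of the center and the reflection axis, and then extracting \emph{exactly one} such center in the characteristic case from the injectivity of the irrational rotation. The periodic ``no flip center'' claim is the other delicate point, requiring a clean argument that a primitive Christoffel period cannot be reversed onto itself with a single central flip; the boundedness of its Christoffel factors is then routine. With these in hand, the partition argument of the first paragraph upgrades the four implications to the full equivalence $(M_i)\Leftrightarrow(MH_i)$.
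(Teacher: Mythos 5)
The paper does not prove this statement: it is quoted verbatim from Reutenauer \cite[Theorem 6.1]{MR2242616}, so there is no internal argument to compare your attempt with, and it must be judged on its own merits. Your global architecture is valid: the classes $(M_i)$ are pairwise exclusive by definition, the classes $(MH_i)$ exhaust all balanced biinfinite sequences by the Morse--Hedlund classification (granting the standard normal forms for the periodic and skew cases), so the four implications $(MH_i)\Rightarrow(M_i)$ do upgrade automatically to equivalences. Your treatment of $(MH_4)\Rightarrow(M_4)$ is correct --- both exhibited centers are indeed flip centers, though the verification silently uses that the central word $m$ of a Christoffel word $\a m\b$ is a palindrome, which you should state. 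However, the two steps you yourself flag as obstacles are genuine gaps, and in both cases the mechanism you sketch for filling them is incorrect as stated.

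For the periodic case, the primitivity route fails: the reversal of $\a m\b$ is $\b m\a$, a \emph{conjugate} of $\a m\b$, so ${}^\infty w^\infty$ genuinely is reversal-invariant up to a shift, and ``the reversed period coincides with the period up to a central flip'' contradicts nothing. What is impossible is the flip itself combined with periodicity: if $u$ has period $n\geq 2$ and $u_{j-k}=u_{j+1+k}$ for all $k\geq 1$, then $u_j=u_{j+n}=u_{j+1+(n-1)}=u_{j-(n-1)}=u_{j+1-n}=u_{j+1}$, contradicting $u_j\neq u_{j+1}$ (the case $n=1$ is a constant sequence, where no factorization exists trivially). For the Sturmian cases, the pivotal sentence --- ``such a reflection preserves the pair of break points exactly when the orbit meets a break point'' --- is wrong: a reflection preserving the partition \emph{always} swaps the two break points $0$ and $1-\alpha$, whether or not the orbit meets them. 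The correct statement has two parts: (a) a flip center at $j+\tfrac12$ forces the reflection $R(t)=c-t$ with $c\equiv\alpha(2j+1)+2\rho$ to preserve the partition cells, because the backward half-orbit is dense and $R$ maps its intersection with each cell into the same cell, which by continuity forces $c\equiv 1-\alpha \pmod 1$; and (b) once $R$ preserves the open cells, every non-break orbit point and its image receive the same letter, so the required central mismatch $u_j\neq u_{j+1}$ occurs if and only if $x_{j+1}=0$ (equivalently $x_j=1-\alpha$), i.e.\ if and only if the orbit meets a break point. Together (a) and (b) yield both Sturmian implications at once: no flip center in the generic case (giving $(M_2)$), and exactly one in the characteristic case (giving $(M_3)$, uniqueness coming from the injectivity of an irrational rotation orbit). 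Without (a) and (b) spelled out, neither implication is established. Finally, the boundedness (periodic case) and unboundedness (aperiodic case) of the lengths of Christoffel factors are asserted without argument; both are true and standard, but need a reference or a short proof (e.g.\ boundedness via the factor complexity of a Christoffel word versus that of a word with small period).
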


\subsection{Indistinguishable asymptotic pairs}

In this section, we give equivalent conditions for
balanced sequences satisfying cases $(M_3)$ or $(M_4)$.
Cases $(M_3)$ and $(M_4)$ can be expressed in terms of
limits of mechanical words toward an irrational or rational slope from above or
from below which were shown to be equivalent to sequences that belong to an
indistinguishable asymptotic pair \cite{MR4227737}.

    Concretely, given a finite set $\Sigma$, we consider the space of
    sequences $\Sigma^{\Z} = \{ s \colon \Z \to \Sigma\}$ endowed with
    the prodiscrete topology and 
    the \emph{shift} $\sigma \colon \Sigma^{\Z}\to
    \Sigma^{\Z}$ where 
    \[ 
    \left(   \sigma(s) \right)_m  =  s_{m+1} \quad \mbox{ for every } 
             m \in \Z \mbox{ and } s \in \Sigma^{\Z}.  
    \]
    The shift on $\Sigma^{\Z}$ is invertible and extends to
    a shift action $\Z
    \overset{\sigma}{\curvearrowright} \Sigma^{\Z}$. 
    In this setting, two
    sequences $s,t \in \Sigma^{\Z}$ are \emph{asymptotic} if $s$ and
    $t$ differ in finitely many positions of $\Z$. The finite set $F = \{ n \in
    \Z : s_n \neq t_n\}$ is called the \emph{difference set} of
    $(s,t)$.

    Given two asymptotic sequences $s,t \in \Sigma^{\Z}$, we may compare the
    number of occurrences of a fixed pattern.  A \emph{pattern} is a function
    $p \colon S \to \Sigma$ where $S$, called
    \emph{support}, is a finite subset of $\Z$.
    The \emph{occurrences} of a pattern $p\in\Sigma^S$ in a sequence
    $s\in\Sigma^\Z$ is the set $\occ_p(s):=\{n\in\Z\colon \sigma^n(s)|_S = p\}$.
    Observe that when $s,t \in \Sigma^{\Z}$ are asymptotic sequences,
    the difference $\occ_p(s)\setminus \occ_p(t)$ is finite
    because the occurrences of $p$ are the same outside the difference set.
    We say that $(s,t)$ is
    an \emph{indistinguishable asymptotic pair} 
    if $(s,t)$ is asymptotic and
    \[
        \#\left(\occ_p(s)\setminus \occ_p(t)\right) = 
        \#\left(\occ_p(t)\setminus \occ_p(s)\right)
    \] 
    for every finite support $S\subset\Z$ and
    every pattern $p\in\Sigma^S$. 
    Extending the results proved in \cite{MR4227737} about indistinguishable
    asymptotic pairs, we may prove equivalent conditions for balanced sequences
    satisfying Markoff property $(M_3)$ or $(M_4)$.
    In the statement, we denote the position of the origin of a biinfinite
    sequence $s=\cdots s_{-2}s_{-1}.s_0s_1s_2\cdots\in\Sigma^\Z$ with a dot
    ($.$) between positions $-1$ and $0$.

\begin{maintheorem}\label{thm:equivalence-indistinguishable-pair}
    Let $s\in\{\a,\b\}^{\Z}$ and $n_0\in\Z$.
    The following are equivalent conditions describing balanced sequences
    satisfying Markoff property $(M_3)$ or $(M_4)$:
    \begin{enumerate}
        \item the sequence $s$ has a factorization $\sigma^{n_0}s=\widetilde{p}x.yp$ where
            $\{x,y\}=\{\a,\b\}$;
        \item there exists a sequence $(\alpha_k)_{k\in\Z_{\geq0}}$ with $\alpha_k\in[0,1]\setminus\Q$
            such that $\sigma^{n_0}s=\lim_{k\to\infty} s_{\alpha_k,0}$
            or        $\sigma^{n_0}s=\lim_{k\to\infty} s'_{\alpha_k,0}$;
        \item there exists a sequence $t\in\{\a,\b\}^{\Z}$
            such that $(s,t)$ is 
            an indistinguishable asymptotic pair
            with difference set $\{n_0-1,n_0\}$.
    \end{enumerate}
\end{maintheorem}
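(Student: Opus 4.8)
The plan is to fix $n_0$ and reduce to $n_0=0$ by replacing $s$ with $\sigma^{n_0}s$: each condition is equivariant under the shift (in (3), a difference set $\{n_0-1,n_0\}$ becomes $\{-1,0\}$ after applying $\sigma^{n_0}$, and indistinguishability is shift-invariant). I would then prove the cycle $(1)\Rightarrow(2)\Rightarrow(3)\Rightarrow(1)$. The computational backbone is the reflection symmetry of intercept-$0$ mechanical sequences: for irrational $\alpha$ one checks directly from the definitions that $s_{\alpha,0}(-n-1)=s_{\alpha,0}(n)$ for every $n\ge 1$, while the central pair is $s_{\alpha,0}(-1)s_{\alpha,0}(0)=\b\a$ (and dually $s'_{\alpha,0}(-1)s'_{\alpha,0}(0)=\a\b$). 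Hence every characteristic mechanical sequence already has the palindromic form $\widetilde{p}\,\b.\a\,p$ (resp.\ $\widetilde{p}\,\a.\b\,p$) of condition (1), where $p$ is the characteristic word of slope $\alpha$.

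For $(1)\Rightarrow(2)$ I would split according to \Cref{thm:balanced-4-cases}. If $s$ satisfies $(MH_3)$, the word $p$ is aperiodic; since an aperiodic balanced right-infinite word is Sturmian and the reflection symmetry pins the intercept to $0$, the sequence $s=\widetilde{p}x.yp$ is itself a characteristic Sturmian sequence $s_{\beta,0}$ or $s'_{\beta,0}$ of irrational slope $\beta$, so the constant sequence $\alpha_k=\beta$ works. If $s$ satisfies $(MH_4)$ (the skew case $\cdots(ymx)(ymy)(xmy)\cdots$ with $\a m\b$ Christoffel of rational slope $\beta$), I would choose irrationals $\alpha_k\to\beta$ approaching from the side dictated by the central flip $x.y$, and show $s_{\alpha_k,0}\to\sigma^{n_0}s$ (or $s'_{\alpha_k,0}\to\sigma^{n_0}s$) in the prodiscrete topology. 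The reflection symmetry holds for every $\alpha_k$ and passes to the limit, so only convergence of the one-sided periodic tails to the period $xmy$ needs checking, which is a finite case analysis.

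For $(2)\Rightarrow(3)$ I would take $t$ to be the flipped limit: if $\sigma^{n_0}s=\lim_{k\to\infty} s_{\alpha_k,0}$, set $t=\lim_{k\to\infty} s'_{\alpha_k,0}$ (the upper case is symmetric). For each irrational $\alpha_k$ the pair $(s_{\alpha_k,0},s'_{\alpha_k,0})$ differs exactly on $\{-1,0\}$ and is an indistinguishable asymptotic pair by the results of \cite{MR4227737}. Since the occurrences of a fixed finite pattern $p$ in $\sigma^{n_0}s$ and $t$ can differ only inside a bounded window around $\{-1,0\}$, and on any finite window the approximants agree with the limits for $k$ large, the identity $\#(\occ_p(\sigma^{n_0}s)\setminus\occ_p(t))=\#(\occ_p(t)\setminus\occ_p(\sigma^{n_0}s))$ is inherited from the finite-$k$ identities; the difference set stays $\{-1,0\}$ because the approximants differ only there.

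The last and hardest step is $(3)\Rightarrow(1)$. Counting occurrences of the two singleton patterns forces $\{s_{-1},s_0\}=\{\a,\b\}$ with $t$ equal to $s$ transposed at positions $-1,0$: indeed the difference set being exactly $\{-1,0\}$ means $s$ and $t$ flip at both positions, and the equal-letter case $s_{-1}=s_0$ would make one singleton count equal $2$ and the other $0$, so only the transposition survives. Thus $s=Lx.yR$ and $t=Ly.xR$ with $R=s_1s_2\cdots$ right-infinite and $L$ left-infinite. The crux is to upgrade this to the palindromic identity $L=\widetilde{R}$, i.e.\ $s_{-n-1}=s_n$ for all $n\ge 1$; this is precisely where the extension of \cite{MR4227737} is needed, and I expect it to be the main obstacle. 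I would obtain it by applying indistinguishability to patterns straddling the centre, matching occurrences of a word immediately to the left of the transposition against occurrences of its reversal immediately to the right, so that the rigidity of the central flip propagates outward and forces the reflection. Once $s=\widetilde{R}x.yR$ is established, $s$ is balanced and carries a Markoff factorization, so by \Cref{thm:balanced-4-cases} it satisfies $(M_3)$ or $(M_4)$, which closes the cycle.
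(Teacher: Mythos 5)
Your reduction to $n_0=0$, your treatment of $(1)\Rightarrow(2)$, and your limit argument for $(2)\Rightarrow(3)$ are sound; the latter is a legitimate self-contained substitute for what the paper actually does, which is to quote the entire equivalence $(2)\Leftrightarrow(3)$ from \cite[Theorem B]{MR4227737} and add only shift-invariance of indistinguishable pairs. The genuine gap is the step you yourself flag: $(3)\Rightarrow(1)$ is never proved. You describe a strategy (``matching occurrences of a word immediately to the left of the transposition against occurrences of its reversal immediately to the right'') and state that you expect it to be the main obstacle, but no argument is given --- and this is exactly the hard direction of the cited Theorem B, which the paper never has to confront because it imports it. As written, your proposal therefore establishes strictly less than the statement. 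For what it is worth, the missing step can be completed by elementary means using the window reformulation of indistinguishability (for each $\ell$, the multisets of length-$\ell$ factors of $s_{-\ell}\cdots s_{\ell-1}$ and $t_{-\ell}\cdots t_{\ell-1}$ coincide). Assuming by induction $s_{-1-k}=s_k$ for $1\le k<n$, write $u=s_1\cdots s_{n-1}$, $a=s_{-n-1}$, $b=s_n$, and (after your correct singleton argument) $s_{-1}s_0=\a\b$, $t_{-1}t_0=\b\a$. Among the $n+2$ windows of length $n+1$, the middle ones contain both central positions and hence have the same number of $\b$'s in $s$ and in $t$; only the extreme windows change, from $a\widetilde{u}\a$ and $\b u b$ in $s$ to $a\widetilde{u}\b$ and $\a u b$ in $t$. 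Equality of the multisets of $\b$-counts then forces $|\b u b|_{\b}=|a\widetilde{u}\a|_{\b}+1$, i.e.\ $1+|u|_{\b}+|b|_{\b}=|a|_{\b}+|u|_{\b}+1$, hence $a=b$. Some such argument must be supplied for your cycle to constitute a proof.

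There is a second, independent defect in the cycle. Your step $(1)\Rightarrow(2)$ opens by invoking \Cref{thm:balanced-4-cases}, which applies only to balanced sequences; like the paper, you are implicitly reading condition (1) as ``$s$ satisfies the Markoff property \emph{and} has a factorization $\widetilde{p}x.yp$'' --- this is what makes the conditions describe $(M_3)$/$(M_4)$. But what your $(3)\Rightarrow(1)$ would deliver is only the factorization, and your closing claim that $s$ ``is balanced'' once $s=\widetilde{R}x.yR$ is established is unjustified: the factorization alone does not imply balancedness. For instance, with $p=\a\a\b\b\a\a\b\b\cdots$ the sequence $\widetilde{p}\a.\b p$ has the required form yet contains both $\a\a$ and $\b\b$, so it is not balanced. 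Hence either balancedness must be carried as a standing hypothesis on $s$ through all three conditions (the reading under which the paper's own proof is coherent), or it must be derived from indistinguishability, which again requires the cited machinery (e.g.\ $(3)\Rightarrow(2)$, or the language equalities behind \cite[Corollary 3.6]{MR4227737}). Your proposal does neither explicitly, so even granting the reflection symmetry the cycle does not close as stated.
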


\begin{proof}
    (1) $\implies$ (2).
    It is sufficient to prove it for $n_0=0$.
    We suppose that $s$ has a factorization
    $s=\widetilde{p}x.yp$
    where $\{x,y\}=\{\a,\b\}$.
    The symmetry $n\mapsto -n-1$ keeps the sequence $s$ invariant
    except at $\{-1,0\}$. In other words,
    $s(n)=s(-n-1)$ for every $n\in\Z\setminus\{-1,0\}$
    and $\{s(-1),s(0)\}=\{\a,\b\}$.

    Suppose that $s$ satisfies case $(M_3)$.
    From \Cref{thm:balanced-4-cases}, $s$ also satisfies case $(MH_3)$,
    that is, there exists an irrational number
    for some $\alpha\in[0,1]\setminus\Q$ and
    $\rho\in\R$ such that
        $s=s_{\alpha,\rho}$ or $s=s'_{\alpha,\rho}$ 
        with $\Z\cap\alpha\Z+\rho\neq\varnothing$.
        Since $s(n)=s(-n-1)$ for every $n\in\Z\setminus\{-1,0\}$,
        we must have $\rho=0$.
        Because $\alpha$ is irrational, 
        for every $n\in\Z\setminus\{-1,0\}$ and
        every sequence $(\alpha_k)_{k\in\Z_{\geq0}}$ 
        of irrational numbers $\alpha_k$
        such that $\lim_{k\to\infty}\alpha_k=\alpha$,
        we have
        \[
        s_{\alpha,0}(n)
        =\lfloor\alpha(n+1)\rfloor-\lfloor\alpha n\rfloor
        =\lim_{k\to\infty}\lfloor\alpha_k(n+1)\rfloor-\lfloor\alpha_k n\rfloor
        =\lim_{k\to\infty} s_{\alpha_k,0}(n),
        \]
        \[
        s'_{\alpha,0}(n)
        =\lceil\alpha(n+1)\rceil-\lceil\alpha n\rceil
        =\lim_{k\to\infty}\lceil\alpha_k(n+1)\rceil-\lceil\alpha_k n\rceil
        =\lim_{k\to\infty} s'_{\alpha_k,0}(n).
        \]
        Also, since each $\alpha_k$ is irrational, we have
        $s_{\alpha,0}(0)s_{\alpha,0}(1)
         =\b\a=\lim_{k\to\infty}s_{\alpha_k,0}(0)s_{\alpha_k,0}(1)$
         and
        $s'_{\alpha,0}(0)s'_{\alpha,0}(1)
         =\a\b=\lim_{k\to\infty}s'_{\alpha_k,0}(0)s'_{\alpha_k,0}(1)$.
         We conclude that
            $s=\lim_{k\to\infty} s_{\alpha_k,0}$
            or $s=\lim_{k\to\infty} s'_{\alpha_k,0}$.

    Suppose that $s$ satisfies case $(M_4)$. 
    From \Cref{thm:balanced-4-cases}, $s$ also satisfies case $(MH_4)$,
    that is,
     $s$ is an ultimately periodic word but not
        purely periodic, i.e.,
        $s=\cdots xxyxx \cdots$ or $s=\cdots (ymx)(ymx)(ymy)(xmy)(xmy)\cdots$
        where $\{x,y\}=\{\a,\b\}$ and $\a m\b$ is a Christoffel word.
        From 
        \cite[Lemma 4.2]{MR4227737}, there exists $a,b\in\Z_{\geq0}$ coprime
        integers such that 
        \[
            \widetilde{p}\b.\a p =\lim_{\alpha\to\frac{a}{a+b}^+}s_{\alpha,0}
        \quad\text{ and }
        \quad
        \widetilde{p}\a.\b p =\lim_{\alpha\to\frac{a}{a+b}^+}s'_{\alpha,0}
        \quad
        \text{(limit from above)}
        \]
        or
        \[
            \widetilde{p}\b.\a p =\lim_{\alpha\to\frac{a}{a+b}^-}s_{\alpha,0}
        \quad\text{ and }
        \quad
        \widetilde{p}\a.\b p =\lim_{\alpha\to\frac{a}{a+b}^-}s'_{\alpha,0}
        \quad
        \text{(limit from below).}
        \]

    (2) $\implies$ (1). 
    If $\lim_{k\to\infty}\alpha_k\in\Q$, then
    from \cite[Lemma 4.2]{MR4227737}, we directly have that
        $\sigma^{n_0}s$ has a factorization $\sigma^{n_0}s=\widetilde{p}x.yp$ where
            $\{x,y\}=\{\a,\b\}$.
    If $\lim_{k\to\infty}\alpha_k=\alpha\in[0,1]\setminus\Q$, then
            $\lim_{k\to\infty} s_{\alpha_k,0}=s_{\alpha,0}$
            and
            $\lim_{k\to\infty} s'_{\alpha_k,0}=s'_{\alpha,0}$.
            Both are symmetric satisfying
            $s_{\alpha,0}(n)=s_{\alpha,0}(-n-1)$ 
            and
            $s'_{\alpha,0}(n)=s'_{\alpha,0}(-n-1)$ 
            for every $n\in\Z\setminus\{-1,0\}$ and
            $\{s_{\alpha,0}(-1),s_{\alpha,0}(0)\}
            =\{s'_{\alpha,0}(-1),s'_{\alpha,0}(0)\}
            =\{\a,\b\}$.

    (2) $\iff$ (3).
    It was proved in \cite[Theorem B]{MR4227737} that (2) holds
    if and only if there exists a sequence $t'\in\{\a,\b\}^{\Z}$ such that
    $(\sigma^{n_0}s,t')$ is an indistinguishable asymptotic pair with
    difference set $\{-1,0\}$. 
    This holds
    if and only if 
    $(s,\sigma^{-n_0}t')$ is an indistinguishable asymptotic pair with
    difference set $\{n_0-1,n_0\}$ since
    the shift preserves indistinguishable asymptotic pairs
    \cite[Proposition 2.5]{MR4227737}. 
    It concludes the proof if we let $t=\sigma^{-n_0}t'$.
\end{proof}

\subsection{Language of a balanced sequence}


Balanced sequences have other equivalent definitions, for example, in terms of
factor complexity \cite{MR0322838}. A balanced sequence satisfies Markoff properties
$(M_2)$, $(M_3)$ or $(M_4)$ if and only if it has complexity $n+1$, see
\cite[Theorem 2.1.13]{MR1905123} stated for right-infinite sequences.

The language of a balanced sequence of complexity $n+1$ can be compactly
represented in two ways as described in the following result.
It shows that there exist two words of length $2n$ that contains the language
of factors of length $n\geq1$ occurring in a balanced sequence.
The proof follows easily from the notion of indistinguishable asymptotic pairs
\cite{MR4227737}.

\begin{corollary}\label{cor:hamiltonian-sturmian}
Let $s\in\{\a,\b\}^\Z$ be a balanced sequence
having at least one factorization $s=\widetilde{p}xyp$
        where $\{x,y\}=\{\a,\b\}$.
        Let $n\geq1$ and $w$ be the prefix of $p$ of length $n-1$.
        The two words
$\widetilde{w}\a\b w$
and
$\widetilde{w}\b\a w$
of length $2n$
contain the $n+1$ factors of $s$. More precisely,
$
\Lcal_n(s) 
= \Lcal_n(\widetilde{w}\a\b w)
= \Lcal_n(\widetilde{w}\b\a w)$.
\end{corollary}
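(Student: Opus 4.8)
The plan is to recognize the two words $\widetilde w\a\b w$ and $\widetilde w\b\a w$ as the two central length-$2n$ windows attached to the factorization of $s$, and then to prove that sliding a length-$n$ window across such a central word already lists every factor of length $n$ of $s$. First I would fix the origin so that the hypothesis reads $s=\widetilde p\,x.y\,p$ with $\{x,y\}=\{\a,\b\}$ and the dot between positions $-1$ and $0$; this is condition (1) of \Cref{thm:equivalence-indistinguishable-pair}, so $s$ is of type $(M_3)$ or $(M_4)$ and hence has complexity $n+1$, i.e. $\Lcal_n(s)$ has exactly $n+1$ elements. The factorization directly encodes the reflection symmetry $s_i=s_{-1-i}$ for every $i\notin\{-1,0\}$, together with $s_{-1}=x\neq y=s_0$. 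A short computation with this symmetry identifies the central factor $C:=s_{-n}s_{-n+1}\cdots s_{n-1}$ of length $2n$ with $\widetilde w\,x\,y\,w$, where $w$ is the length-$(n-1)$ prefix of $p$; its reversal is $\widetilde w\,y\,x\,w$, the other word in the statement. Thus $\{\widetilde w\a\b w,\widetilde w\b\a w\}=\{C,\widetilde C\}$.

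The $n+1$ factors of length $n$ occurring in $C=\widetilde w\,x\,y\,w$ are exactly its windows $c_j\cdots c_{j+n-1}$ for $1\le j\le n+1$, and each is a factor of $s$. Since there are $n+1$ of them and $\Lcal_n(s)$ has cardinality $n+1$, it suffices to show that these windows are pairwise distinct: the equality $\Lcal_n(C)=\Lcal_n(s)$ then follows by a counting argument.

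This distinctness is the heart of the matter and the step I expect to fight with. I would argue it by contradiction: a coincidence $c_j\cdots c_{j+n-1}=c_{j'}\cdots c_{j'+n-1}$ with $j<j'$ forces $C$ to be $d$-periodic with $d=j'-j$ on the interval $I=[j,\,j'+n-1]$, whose length $n+d$ is large enough that $I$ contains the two central positions carrying $x$ and $y$. One then plays this period against the near-palindromic structure of $C$: the reflection symmetry says $c_m=c_{2n+1-m}$ for every $m\notin\{n,n+1\}$, so $C$ equals its own reversal except that its two central letters are swapped. Chaining one period step, one reflection step and one more period step transports the central letter $y$ onto the central letter $x$ (when $d=1$ the period alone already forces every letter of $I$ equal), yielding $x=y$, a contradiction. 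The only delicate part is the index bookkeeping and the small-$d$ edge case; conceptually it is a Fine--Wilf-type incompatibility between a short period and a palindrome with a swapped center.

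Finally I would transfer the conclusion from $C$ to $\widetilde C$. The language $\Lcal_n(s)$ is closed under reversal, a standard property of balanced sequences, so $\Lcal_n(\widetilde C)=\widetilde{\Lcal_n(C)}=\widetilde{\Lcal_n(s)}=\Lcal_n(s)$. Alternatively, since $(s,t)$ with $t=\widetilde p\,y.x\,p$ is an indistinguishable asymptotic pair with difference set $\{-1,0\}$ by \Cref{thm:equivalence-indistinguishable-pair}, comparing the occurrences that meet $\{-1,0\}$ shows that $C$ and $\widetilde C$ realize the very same multiset of length-$n$ words, whence $\Lcal_n(\widetilde C)=\Lcal_n(C)$. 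Either route gives $\Lcal_n(s)=\Lcal_n(\widetilde w\a\b w)=\Lcal_n(\widetilde w\b\a w)$, which is the claim.
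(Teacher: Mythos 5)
Your proof is correct, but it takes a genuinely different route from the paper's. The paper's own proof is essentially a reduction plus a citation: it applies \Cref{thm:equivalence-indistinguishable-pair} to produce the companion sequence $t=\widetilde{p}\,y.x\,p$, notes that the central length-$2n$ windows of $s$ and $t$ are $\widetilde{w}\a\b w$ and $\widetilde{w}\b\a w$, and then invokes Corollary 3.6 of \cite{MR4227737}, which states precisely that these central windows of an indistinguishable asymptotic pair contain all factors of length $n$. You instead prove that statement from scratch: you combine the complexity bound $\#\Lcal_n(s)=n+1$ (which the paper records, with citation, just before the corollary) with a counting argument, reducing everything to the pairwise distinctness of the $n+1$ central windows $c_j\cdots c_{j+n-1}$, and you get distinctness by playing a period $d=j'-j$ against the swapped-center palindromic symmetry $c_m=c_{2n+1-m}$ for $m\notin\{n,n+1\}$. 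I checked the index bookkeeping: for $d=1$ the period alone yields $c_n=c_{n+1}$, i.e.\ $x=y$; for $2\le d\le n$ the chain $x=c_n=c_{n+d}=c_{n+1-d}=c_{n+1}=y$ is legitimate, since $n\in[j,j+n-1]$, since $n+d\notin\{n,n+1\}$, and since $j\le n+1-d\le j+n-1$ follows from $j\ge 1$ and $j'\le n+1$. So your argument is sound, and it buys self-containedness (an elementary Fine--Wilf-versus-palindrome incompatibility) where the paper buys brevity by outsourcing the key step to the asymptotic-pairs machinery.

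One caveat: of your two routes for transferring the conclusion from $\widetilde{w}\a\b w$ to $\widetilde{w}\b\a w$, you should prefer the second. The first asserts that $\Lcal(s)$ is closed under reversal as a ``standard property''; this is standard for Sturmian languages, but for the skew sequences of type $(M_4)$ the natural proof of reversal-closure via the mirror symmetry of $s$ runs into exactly the windows that straddle the center --- a window meets $\{-1,0\}$ if and only if its mirror image does --- so that route is dangerously close to assuming what is being proved. The indistinguishability route has no such issue: occurrences of a length-$n$ pattern outside the difference set agree in $s$ and $t$, so the defining counting identity forces the multiset of words read in the $n+1$ windows meeting $\{-1,0\}$ to be the same for $s$ and for $t$; these are the windows of $\widetilde{w}\a\b w$ and of $\widetilde{w}\b\a w$ respectively. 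In effect, this re-proves the paper's black box, Corollary 3.6 of \cite{MR4227737}, directly from the definition.
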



\begin{proof}
    Let $n_0\in\Z$ be such that
    $\sigma^{n_0}s=\widetilde{p}x.yp$.
    We may assume that $n_0=0$ and $\sigma^{n_0}s=s$ since shifting the
    sequence $s$ preserves its language.  From
    \Cref{thm:equivalence-indistinguishable-pair},
    there exists a sequence $t\in\{\a,\b\}^{\Z}$
    such that $(s,t)$ is an indistinguishable asymptotic pair
    with difference set $\{n_0-1,n_0\}=\{-1,0\}$.
    In particular, $s$ and $t$ are equal outside of the difference set, i.e.,
    $s|_{\Z\setminus\{-1,0\}}=t|_{\Z\setminus\{-1,0\}}$,
    and different on the difference set.
    Since the alphabet is binary, we must have $t_{-1}t_0=yx$.
    Therefore the sequence $t$ satisfies $t=\widetilde{p}y.xp$.
    Replacing $\{x,y\}$ by $\{\a,\b\}$, we have that the indistinguishable asymptotic pair
    is of the form $\{s,t\}=\{\widetilde{p}\a.\b p,\widetilde{p}\b.\a p\}$.
    Thus, for every prefix $w$ of length $n-1$ of $p$, we have
        $\{s_{-n}s_{-n+1}\cdots s_{n-1},
           t_{-n}t_{-n+1}\cdots t_{n-1}\}
           =\{\widetilde{w}\a\b w, \widetilde{w}\b\a w\}$.
    From \cite[Corollary 3.6]{MR4227737}, we have 
    \begin{align*}
    \Lcal_n(s) 
        &= \Lcal_n(s_{-n}s_{-n+1}\cdots s_{n-1}) 
         = \Lcal_n(t_{-n}t_{-n+1}\cdots t_{n-1})\\
        &= \Lcal_n(\widetilde{w}\a\b w)
         = \Lcal_n(\widetilde{w}\b\a w).\qedhere
    \end{align*}
\end{proof}

For example, the following two words of length 16 contain one occurrence of each factor of length 8 occurring in the Fibonacci word:
\[
    \begin{tabular}{l}
\texttt{1010010.01.0100101}\\
\texttt{1010010.10.0100101}
    \end{tabular}
\]
We use \Cref{cor:hamiltonian-sturmian}
in \Cref{lem:cyclic-ordering-sturmian-factors}
to show the existence of a bijection
$f:\Lcal_n(s)\to\Lcal_n(s)$ 
which is a cyclic permutation and 
having the property, except for two words in $\Lcal_n(s)$, 
of flipping a $\a\b$ into a $\b\a$ from $u$ to $f(u)$.
This property is well-known in the context of the Burrows-Wheeler transform of
a Christoffel word, see \cite{MR1976388,MR2197281} or
more recently \cite[Theorem 15.2.4]{MR3887697}.

Small local changes from a factor to the next (in radix order) can also be seen
in the language of a balanced sequence.
For example, we list below the factors of length 8 in the Fibonacci word
as well as the greatest (for the radix order) factor of length 7
and the smallest factor of length 9:
\[
\begin{array}{lllll}
                              & & \texttt{\underline{1}010010}  &=& \underline{\b}m\\
\texttt{0010\underline{01}01} &=& \texttt{0010\underline{01}01} &=& \a m\b\\
\texttt{0\underline{01}01001} &=& \texttt{0\underline{01}01001} & & \\
\texttt{010010\underline{01}} &=& \texttt{010010\underline{01}} & & \\
\texttt{010\underline{01}010} &=& \texttt{010\underline{01}010} &=& \widetilde{u}\a\b v\\
\texttt{\underline{01}010010} &=& \texttt{\underline{01}010010} &=& \widetilde{u}\b\a v\\
\texttt{10010\underline{01}0} &=& \texttt{10010\underline{01}0} & & \\
\texttt{10\underline{01}0100} &=& \texttt{10\underline{01}0100} & & \\
\texttt{10100100}             &=& \texttt{1010010\underline{0}} &=& \b m\underline{\a}\\
                              & & \texttt{\underline{1}0100101} &=& \underline{\b}m\b\\
                              & & \texttt{001001010}            &=& \a m\b\a
\end{array}
\]
In the left column, the 8 cyclic conjugates of the Christoffel word
\texttt{00100101} are listed in lexicographic order. 
It illustrates what happens in the context of the Burrows-Wheeler transform: in
each word (except the last), the factor $\a\b$ that is changed into a $\b\a$ is
underlined.
In the middle column, the 9 factors of length $8$ in the language of the Fibonacci word.
There is also the lexicographically largest factor of length 7 and the 
lexicographically smallest factor of length 9. As for the conjugates of a
Christoffel word, we observe that at most two letters change from a word
to the next.
The type of changes summarized in the right column
can be verified on the factors of the Fibonacci
word of length up to 9 ordered in radix order
in \Cref{T:fibo_factor} in the appendix.

This observation proved in \Cref{lem:cyclic-ordering-sturmian-factors}
is a key point in the proof of \Cref{thm:main-result-intro}.
More precisely, we observe that the small local changes are of the following forms:
\begin{equation}\label{eq:local-changes}
\widetilde{u}\a\b v
\mapsto
\widetilde{u}\b\a v,\qquad
w\a
\mapsto
w\b,\qquad
\b w
\mapsto
\a w\a,\qquad
\b w
\mapsto
\a w\b
\end{equation}
where $u,v,w\in\{\a,\b\}^*$ and $u$ is a prefix of $v$ or vice versa.


\section{Increasing over small local changes}
\label{sec:increasing-small-changes}

In this section, we prove that the map
$w\mapsto\mu_q(w)_{12}$ is increasing over the small local changes listed in
Equation~\eqref{eq:local-changes}.
Recall that we use the partial order $\prec$ on $\Z[q]$ is defined as
\[
    f \prec g
    \quad
    \text{ if and only if }
    \quad
    f \neq g
    \text{ and }
    g - f \in \Z_{\geq0}[q].
\]
More precisely, we prove the following two propositions.

\begin{proposition}\label{prop:bw-awa}
    For every $w\in\{\a,\b\}^*$,
    \[
        \mu_q(w\a)_{12}\prec \mu_q(w\b)_{12} 
        \quad
        \text{ and }
        \quad
        \mu_q(\b w)_{12}\prec 
        \mu_q(\a w\a)_{12}\prec
        \mu_q(\a w\b)_{12}.
    \]
\end{proposition}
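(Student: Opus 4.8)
The plan is to reduce everything to one base lemma: every entry of $\mu_q(w)$ lies in $\Z_{\geq0}[q]$ and $\mu_q(w)_{11}\neq0$. This follows by induction on $|w|$, since $\Z_{\geq0}[q]$ is closed under addition and multiplication, the entries of $\mu_q(\a)$ and $\mu_q(\b)$ lie in $\Z_{\geq0}[q]$, and each right multiplication by $\mu_q(\a)$ or $\mu_q(\b)$ keeps the $(1,1)$ entry nonzero. Writing $\mu_q(w)=\begin{pmatrix}a&b\\c&d\end{pmatrix}$, I would first record the two elementary identities coming from the second columns $(1,1)^{\mathsf T}$ of $\mu_q(\a)$ and $(1+q,1)^{\mathsf T}$ of $\mu_q(\b)$: for any $N$,
\[
(N\mu_q(\a))_{12}=N_{11}+N_{12},\qquad (N\mu_q(\b))_{12}=N_{11}+N_{12}+q\,N_{11}.
\]
The two easy inequalities follow at once. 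Taking $N=\mu_q(w)$ gives $\mu_q(w\b)_{12}-\mu_q(w\a)_{12}=q\,\mu_q(w)_{11}$, and taking $N=\mu_q(\a w)$ gives $\mu_q(\a w\b)_{12}-\mu_q(\a w\a)_{12}=q\,\mu_q(\a w)_{11}$; both are nonzero elements of $\Z_{\geq0}[q]$ by the base lemma, yielding $\mu_q(w\a)_{12}\prec\mu_q(w\b)_{12}$ and $\mu_q(\a w\a)_{12}\prec\mu_q(\a w\b)_{12}$.

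The remaining inequality $\mu_q(\b w)_{12}\prec\mu_q(\a w\a)_{12}$ is the crux. Setting $D(w):=\mu_q(\a w\a)_{12}-\mu_q(\b w)_{12}$, a direct expansion (using the first identity above for the outer $\a$, and that the top row of $\mu_q(\b)$ equals $(1+q+q^2)$ times the top row of $\mu_q(\a)$ minus $q^2$ times its bottom row) gives $D(w)=(q+q^2)a+c-(q^2+q^3+q^4)b-q\,d$. The obstacle is that this expression has sign-indefinite pieces, so I would not bound it termwise; instead I would expose positivity through a last-letter recursion. Writing $\mu_q(w')=\begin{pmatrix}a'&b'\\c'&d'\end{pmatrix}$ and substituting $\mu_q(w'\a)=\mu_q(w')\mu_q(\a)$ and $\mu_q(w'\b)=\mu_q(w')\mu_q(\b)$, the $b'$ and $d'$ cross-terms collapse and one finds
\begin{align*}
D(\varepsilon)&=q^2,\\
D(w'\b)&=(q^3+q^4+q^5+q^6)a'+q^3b'+(q^2+q^3+q^4)c'+q^2d',\\
D(w'\a)&=q^2\bigl(q\,a'-q^2b'+c'\bigr).
\end{align*}
The first two lines are manifestly nonzero elements of $\Z_{\geq0}[q]$ by the base lemma, since the coefficient of $a'$ is a nonzero element of $\Z_{\geq0}[q]$ and $a'\neq0$. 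For the third line it remains to prove that $E(v):=q\,\mu_q(v)_{11}-q^2\mu_q(v)_{12}+\mu_q(v)_{21}$ lies in $\Z_{\geq0}[q]\setminus\{0\}$.

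I expect this auxiliary positivity to be the main technical point, but it dissolves by the same device. A last-letter expansion gives $E(\varepsilon)=q$ and
\begin{align*}
E(v''\a)&=q^3a''+(q+q^2)c''+q\,d'',\\
E(v''\b)&=(q^3+q^4+q^5)a''+q^3b''+(q+2q^2+q^3+q^4)c''+(q+q^2)d'',
\end{align*}
where $\mu_q(v'')=\begin{pmatrix}a''&b''\\c''&d''\end{pmatrix}$; both are manifestly nonzero elements of $\Z_{\geq0}[q]$ since $a''\neq0$. Hence $E(v)$ is nonzero with nonnegative coefficients for every $v$, so $D(w'\a)=q^2E(w')$ is as well, and therefore $D(w)\in\Z_{\geq0}[q]\setminus\{0\}$ for all $w$. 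Combining the displays proves the chain $\mu_q(\b w)_{12}\prec\mu_q(\a w\a)_{12}\prec\mu_q(\a w\b)_{12}$ together with $\mu_q(w\a)_{12}\prec\mu_q(w\b)_{12}$, completing the proposition. The only work to carry out in full are the routine polynomial identities for $D$ and $E$ under right multiplication, which I have sketched above.
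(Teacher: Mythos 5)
Your proof is correct and follows essentially the same route as the paper: your quantities $E(v)=q\,\mu_q(v)_{11}-q^2\mu_q(v)_{12}+\mu_q(v)_{21}$ and $D(w)=(q+q^2)a-(q^2+q^3+q^4)b+c-qd$ are exactly the two expressions \eqref{eq:intermediaire} and \eqref{eq:proposition} of Lemma~\ref{lem:two-inequalities}, and your last-letter recursions (including the key collapse $D(w'\a)=q^2E(w')$) reproduce the paper's inductive computations verbatim. The only difference is organizational — you decouple the paper's single simultaneous induction into entry positivity, then positivity of $E$, then positivity of $D$ — which is a clean but equivalent restructuring.
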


\begin{proposition}\label{prop:growing-on-flips-ab-ba}
    Let $u,v\in\{\a,\b\}^*$ such that $u$ is a prefix of $v$ or vice versa. Then
    \[
          \mu_q(\widetilde{u} \a\b v)_{12}\prec
          \mu_q(\widetilde{u} \b\a v)_{12}.
    \]
\end{proposition}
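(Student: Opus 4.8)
The plan is to express the quantity $\mu_q(\widetilde u\b\a v)_{12}-\mu_q(\widetilde u\a\b v)_{12}$ as the $(1,2)$ entry of a single matrix product and to show it lies in $\Z_{\geq0}[q]\setminus\{0\}$. Writing $A=\mu_q(\a)$ and $B=\mu_q(\b)$ and using that $\mu_q$ is a monoid morphism, the two words differ only in the central factor, so
\[
\mu_q(\widetilde u\b\a v)-\mu_q(\widetilde u\a\b v)=\mu_q(\widetilde u)\,(BA-AB)\,\mu_q(v).
\]
First I would compute the commutator of the two generators. A direct $2\times2$ calculation gives the pleasant identity
\[
BA-AB=q(1+q^3)\begin{pmatrix}0&1\\-q&0\end{pmatrix},
\]
the diagonal entries of the commutator vanishing and the off-diagonal ones reducing to monomials up to the common factor $q(1+q^3)$. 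Setting $J=\left(\begin{smallmatrix}0&1\\-q&0\end{smallmatrix}\right)$, $P=\mu_q(\widetilde u)$ and $Q=\mu_q(v)$, expanding $PJQ$ yields
\[
\bigl(\mu_q(\widetilde u\b\a v)-\mu_q(\widetilde u\a\b v)\bigr)_{12}=q(1+q^3)\bigl(P_{11}Q_{22}-q\,P_{12}Q_{12}\bigr),
\]
so everything reduces to proving that the bracket is a nonzero polynomial with nonnegative coefficients.

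The next step is to remove the reversal $\widetilde u$. I would record the auxiliary fact that the twisted transpose $M\mapsto \Delta M^{\mathsf T}\Delta^{-1}$, with $\Delta=\left(\begin{smallmatrix}1&0\\0&q\end{smallmatrix}\right)$, fixes both $A$ and $B$; this is checked on the two generators by a one-line computation. Since this map is an anti-automorphism and reversal is an anti-morphism of the free monoid, it follows that $\mu_q(\widetilde u)=\Delta\,\mu_q(u)^{\mathsf T}\Delta^{-1}$ for every word $u$. Reading off entries gives $P_{11}=\mu_q(u)_{11}$ and $P_{12}=q^{-1}\mu_q(u)_{21}$, so the powers of $q$ cancel and the bracket becomes
\[
P_{11}Q_{22}-q\,P_{12}Q_{12}=\mu_q(u)_{11}\,\mu_q(v)_{22}-\mu_q(u)_{21}\,\mu_q(v)_{12}.
\]

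Now the prefix hypothesis enters, and this is the crux. If $u$ is a prefix of $v$, write $v=uz$ so that $\mu_q(v)=\mu_q(u)\mu_q(z)$; substituting the two relevant entries of $\mu_q(v)$, the cross terms cancel and the bracket collapses to $\det\bigl(\mu_q(u)\bigr)\cdot\mu_q(z)_{22}$. Symmetrically, if $v$ is a prefix of $u$, writing $u=vy$ gives $\det\bigl(\mu_q(v)\bigr)\cdot\mu_q(y)_{11}$. To finish I would invoke two positivity facts: first, $\det\mu_q(\a)=q^2$ and $\det\mu_q(\b)=q^4$, so $\det\mu_q(u)$ is always a single positive power of $q$; second, every entry of $A$ and $B$ is a nonzero polynomial with nonnegative coefficients, whence by an immediate induction every entry of any $\mu_q(w)$ is nonzero with nonnegative coefficients (and the empty-word case yields the identity, whose diagonal entries equal $1$). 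Consequently the bracket is a monomial times a nonzero nonnegative polynomial, hence itself nonzero and in $\Z_{\geq0}[q]$; multiplying by $q(1+q^3)$ preserves this, giving the desired $\mu_q(\widetilde u\a\b v)_{12}\prec\mu_q(\widetilde u\b\a v)_{12}$.

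I expect the main obstacle to be the reversal: the appearance of $\widetilde u$ rather than $u$ means $P$ and $Q$ are not both built from a common prefix, and it is precisely the twisted-transpose identity that realigns them so that the prefix condition can act. Once the bracket is rewritten in terms of $\mu_q(u)$ and $\mu_q(v)$, the collapse to a determinant times a single diagonal entry is the step that makes the prefix hypothesis indispensable: without it the cross terms need not cancel and positivity can genuinely fail, consistent with the non-injectivity of $w\mapsto\mu_q(w)_{12}$ on all of $\{\a,\b\}^*$.
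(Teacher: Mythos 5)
Your proof is correct, and while its skeleton parallels the paper's --- both write the difference as $\mu_q(\widetilde u)\,D_q\,\mu_q(v)$ with $D_q=\mu_q(\b\a)-\mu_q(\a\b)$, absorb the shared prefix into the determinant $\det(\mu_q(u))=q^{2|u|_\a+4|u|_\b}$, and conclude by positivity of the entries of $\mu_q$ of the leftover word --- the mechanism you use to tame the reversal is genuinely different. The paper's \Cref{lem:ubau-uabu} establishes the sandwich identity $\mu_q(\widetilde u)D_q\mu_q(u)=\det(\mu_q(u))D_q$ by checking $\mu_q(x)D_q\mu_q(x)=\det(\mu_q(x))D_q$ on the two generators and telescoping, and the proposition is then proved in three cases ($u=v$, $v=us$, $u=vs$), each a one-line extraction of the $(1,2)$ entry. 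You instead prove the twisted-transpose identity $\mu_q(\widetilde u)=\Delta\,\mu_q(u)^{\mathsf T}\Delta^{-1}$ with $\Delta=\left(\begin{smallmatrix}1&0\\0&q\end{smallmatrix}\right)$, which pins down $\mu_q(\widetilde u)$ completely rather than only its interaction with $D_q$, and then let the prefix hypothesis cancel the cross terms, collapsing the bracket to $\det(\mu_q(u))\,\mu_q(z)_{22}$ (resp.\ $\det(\mu_q(v))\,\mu_q(y)_{11}$); your case $u=v$ is subsumed as $z$ empty, where the diagonal entry is $1$. Your collapse step is an instance of the classical skew-form identity $M^{\mathsf T}SM=\det(M)\,S$ with $S=\left(\begin{smallmatrix}0&-1\\1&0\end{smallmatrix}\right)$, conjugated by $\Delta$; in fact the paper's sandwich lemma is a formal consequence of your anti-involution lemma. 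What each approach buys: the paper's lemma is the shortest path to this one proposition, while yours is more structural and reusable (it gives for free that $\mu_q(\widetilde w)$ and $\mu_q(w)$ share diagonal entries and that $\mu_q(\widetilde w)_{12}=q^{-1}\mu_q(w)_{21}$) and explains conceptually why a determinant must appear. Both arguments need the same positivity input --- all entries of $\mu_q(w)$ have nonnegative coefficients, with diagonal entries nonzero for every $w$ and all entries nonzero for nonempty $w$ --- which the paper quotes from \Cref{lem:two-inequalities} and you rederive by the evident induction; your computations (the factorization $D_q=q(1+q^3)\left(\begin{smallmatrix}0&1\\-q&0\end{smallmatrix}\right)$, the entry formula, and the determinant collapse) all check out.
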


The proof of each proposition is preceded by a lemma.
The proof of the two lemmas is the $q$-analog of the proofs made in
\cite{lapointe_these_2020,LR2021} over the integer entries.
In particular, the next lemma extends Lemma 2 from \cite{LR2021}.

\begin{lemma}\label{lem:two-inequalities}
    Let $w\in\{\a,\b\}^*$ and
        polynomials $m,n,o,p\in\Z[q]$ 
    such that 
    $\mu_q(w)= \left(\begin{smallmatrix} m &n\\o&p \end{smallmatrix}\right)$.
        Then $m$, $p$ and
    \begin{align}
        &qm - q^2 n + o ,                            \label{eq:intermediaire}\\
        &(q + q^2 )m -(q^2 + q^3 + q^4)n + o - qp,  \label{eq:proposition}
    \end{align}
    are nonzero polynomials with nonnegative coefficients.
    Moreover, $o$ and $n$
    are nonzero polynomials with nonnegative coefficients
    except if $w$ is empty in which case $o=n=0$.
\end{lemma}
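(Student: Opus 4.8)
The plan is to prove Lemma~\ref{lem:two-inequalities} by induction on the length of $w$, since $\mu_q$ is a monoid homomorphism and each claimed quantity is a linear form in the entries $m,n,o,p$ of $\mu_q(w)$. First I would verify the base case $w=\varepsilon$, where $\mu_q(\varepsilon)=\mathrm{Id}$, so $m=p=1$ and $n=o=0$; then I check directly that $m=1$ and $p=1$ are nonzero with nonnegative coefficients, that \eqref{eq:intermediaire} equals $q>0$, that \eqref{eq:proposition} equals $(q+q^2)-q = q^2>0$, and that indeed $o=n=0$ in this exceptional case, consistent with the stated exception.

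\textbf{The inductive step.} For the induction I would write $w=w'a$ with $a\in\{\a,\b\}$ and set $\mu_q(w')=\left(\begin{smallmatrix} m' & n'\\ o' & p'\end{smallmatrix}\right)$, so that $\mu_q(w)=\mu_q(w')\,\mu_q(a)$. Using the explicit matrices
\[
\mu_q(\a)=\begin{pmatrix} q+q^2 & 1\\ q & 1\end{pmatrix},
\qquad
\mu_q(\b)=\begin{pmatrix} q+2q^2+q^3+q^4 & 1+q\\ q+q^2 & 1\end{pmatrix},
\]
I would expand $m,n,o,p$ as explicit $\Z_{\geq0}[q]$-combinations of $m',n',o',p'$ in each of the two cases $a=\a$ and $a=\b$. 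The key observation is that each target expression, namely $m$, $p$, the form in \eqref{eq:intermediaire}, and the form in \eqref{eq:proposition}, should re-express as a manifestly nonnegative integer combination of the four inductive quantities $m'$, $p'$, \eqref{eq:intermediaire} evaluated at $w'$, and \eqref{eq:proposition} evaluated at $w'$ (possibly together with $n'$ and $o'$, which are nonnegative by the inductive hypothesis once $w'\neq\varepsilon$). Carrying out the four linear substitutions and collecting coefficients in $q$ is the routine computational heart; the point is that no cancellation forces a negative coefficient, and that at least one coefficient survives to guarantee the result is nonzero. For the nonnegativity and nonvanishing of $n$ and $o$ themselves (the final sentence of the lemma), I would note that right-multiplication by either $\mu_q(\a)$ or $\mu_q(\b)$ sends a matrix with all entries in $\Z_{\geq0}[q]$ and $m',o'$ nonzero to one with $n,o$ nonzero, so $n$ and $o$ acquire strictly positive content as soon as one letter is appended.

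\textbf{Where the difficulty lies.} The main obstacle is not conceptual but bookkeeping: I must choose the right nonnegative rewriting of \eqref{eq:intermediaire} and especially \eqref{eq:proposition} in terms of the inductive data so that the nonnegativity is transparent rather than accidental. The delicate forms are \eqref{eq:intermediaire} and \eqref{eq:proposition}, because they involve the entry $o$ with a $+$ sign but $n$ and $p$ with $-$ signs, so a naive substitution produces apparent cancellations; the real content is that after substituting the recurrences for $m,n,o,p$, these negative contributions are exactly absorbed by the positive ones, leaving an $\Z_{\geq0}[q]$-combination of the four inductive invariants. I would organize the computation as a small table (one row per target quantity, one pair of columns for $a=\a$ and $a=\b$) showing each output as an explicit nonnegative combination of $\{m',\,p',\,\eqref{eq:intermediaire}|_{w'},\,\eqref{eq:proposition}|_{w'},\,n',\,o'\}$, which both proves the inductive step and makes the nonvanishing visible from a single surviving positive coefficient.
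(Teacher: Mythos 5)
Your proposal is correct and follows essentially the same route as the paper: induction on the length of $w$ appending one letter on the right, the identical base case, and an inductive step carried by re-expressing $m$, $p$, \eqref{eq:intermediaire} and \eqref{eq:proposition} as manifestly nonnegative combinations of the inductive data (in the paper's computation, \eqref{eq:proposition} for $w\a$ comes out as $q^2$ times \eqref{eq:intermediaire} for $w$, while the other three cases are direct nonnegative combinations of the entries $m,n,o,p$). One small correction to your final remark on $n$ and $o$: the hypothesis you need is that $m'$ and $p'$ are nonzero (not $m'$ and $o'$, since $o'=0$ when $w'=\varepsilon$); indeed $n=m'+n'$ or $(1+q)m'+n'$ and $o=(q+q^2)o'+qp'$ or $(q+2q^2+q^3+q^4)o'+(q+q^2)p'$, so the nonvanishing after appending a letter comes from $m'$ and $p'$, which the induction supplies.
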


\begin{proof}
    The proof is done by induction on the length of $w$.
    If $w$ is the empty word, then $m=p = 1$ and $n=o = 0$.
    Therefore,
    \begin{align*}
        &qm - q^2 n + o = q \in \Z_{\geq0}[q] \setminus\{0\},\\
        &(q + q^2 )m -(q^2 + q^3 + q^4)n + o - qp = ( q + q^2) - q = q^2 \in \Z_{\geq0}[q]\setminus\{0\} .
    \end{align*}

    Let $w\in\{\a,\b\}^*$ such that 
    $\mu_q(w)= \left(\begin{smallmatrix} m &n\\o&p \end{smallmatrix}\right)$
        for some polynomials $m,n,o,p\in\Z_{\geq0}[q]$.
    Assume by induction that $m$, $p$,
        \eqref{eq:intermediaire}
        and
        \eqref{eq:proposition}
    are nonzero polynomials with nonnegative coefficients.

    Let $w'\in\{\a,\b\}^*$ be a nonempty word $w' = w\a$ or $w' = w\b$.
    We have
    $\mu_q(w')= \left(\begin{smallmatrix} m' &n'\\o'&p' \end{smallmatrix}\right)$ for some polynomials $m',n',o',p'\in\Z[q]$.
    If $w' = w\a$, then
    \begin{align}
        \mu(w') = \begin{pmatrix} m' & n' \\ o' & p' \end{pmatrix}
        = \begin{pmatrix} m & n \\ o & p \end{pmatrix} \begin{pmatrix} q + q^2  & 1 \\ q & 1 \end{pmatrix} =
    \left(\begin{array}{rr}
{\left(q + q^{2}\right)} m + q n  & m + n \\
{\left(q + q^{2}\right)} o + q p  & o + p
\end{array}\right). \label{eq:produit_wa}
    \end{align}
    We observe that $m'$, $n'$, $o'$ and $p'$
    are nonzero polynomials with nonnegative coefficients.
    Also, we have 
    \begin{align*}
        q m' - q^{2} n' + o' &= q \left({\left(q + q^{2}\right)} m + q n \right) - q^{2} \left( m + n \right)  + \left( {\left(q + q^{2}\right)} o +  q p \right) \\
                             &= q {\left( q^{2} m +  (q + 1)o + p\right)}  \in \Z_{\geq0}[q] \setminus\{0\},
    \end{align*}
    since $m,p \in \Z_{\geq0}[q]\setminus\{0\}$ and $o \in \Z_{\geq0}[q]$.
    Moreover, using the induction hypothesis, we have
    \begin{align*}
        (q + q^2 )m' -(q^2 + q^3 + q^4)n' + o' - qp' &= (q + q^2 )\left({\left(q + q^{2}\right)} m  + q n\right)  -(q^2 + q^3 + q^4)(m+n) \\&\quad+ \left({\left(q + q^{2}\right)} o + q p \right)- q (o +p )\\
                                                     &= q^{2}{\left( q m - q^{2} n + o\right)} \in \Z_{\geq0}[q]\setminus\{0\}
    \end{align*}
    by Equation \eqref{eq:intermediaire}.

    If $w' = w\b$, then
    \begin{align*}
        \mu(w') &= 
        \begin{pmatrix} m' & n' \\ o' & p' \end{pmatrix} 
       =\begin{pmatrix} m & n \\ o & p \end{pmatrix} 
        \begin{pmatrix} q+2q^2 + q^3+q^4 & 1 + q \\ q + q^2 & 1 \end{pmatrix} \\&=
    \left(\begin{array}{rr}
{\left(q+2q^2 + q^3+q^4\right)} m + {\left( q + q^{2} \right)} n &  {\left(1 + q \right)}m + n \\
{\left(q+2q^2 + q^3+q^4\right)} o + {\left(q + q^{2}\right)} p &  {\left(1 + q \right)}o + p \label{eq:produit_wb}
\end{array}\right)
    \end{align*}
    We observe that $m'$, $n'$, $o'$ and $p'$
    are nonzero polynomials with nonnegative coefficients.
    Also we have that both 
    \begin{align*}
        q m' - q^{2} n' + o' &= q \left( {\left(q+2q^2 + q^3+q^4\right)} m + {\left( q + q^{2} \right)} n \right) - q^{2} \left( {\left(1 + q \right)}m + n \right) \\ &\quad+ \left( {\left(q+2q^2 + q^3+q^4\right)} o + {\left(q + q^{2}\right)} p \right)  \\ 
                             &= q\left( (q^2+ q^3+q^4)m + q^2n + (1 + 2q + q^2 + q^3)o + (1 + q)p\right)    
    \end{align*}
    and
    \begin{align*}
        (q + q^2 )m' -(q^2 + q^3 + q^4)n' + o' - qp' &= (q + q^2 ) \left( {\left(q+2q^2 + q^3+q^4\right)} m + {\left( q + q^{2} \right)} n \right)  \\ &\quad-(q^2 + q^3 + q^4) \left( {\left(1 + q \right)}m + n \right) \\&\quad+ \left({\left(q+2q^2 + q^3+q^4\right)} o + {\left(q + q^{2}\right)} p \right) \\ &\quad- q \left( {\left(1 + q \right)}o + p \right) \\ 
                                                     &= q^2((q + q^2+q^3+q^4)m + qn + (1 + q + q^2)o + p)    
    \end{align*}
    belong to $\Z_{\geq0}[q]\setminus\{0\}$ since $m, p \in \Z_{\geq0}[q]\setminus\{0\}$ and $n,o \in \Z_{\geq0}[q]$.
\end{proof}

\begin{proof}[Proof of \Cref{prop:bw-awa}]
    Let $w\in\{\a,\b\}^*$ such that $\mu_q(w)=
            \left(\begin{smallmatrix} m &n\\o&p \end{smallmatrix}\right)$
                where $m,n,o,p\in\Z_{\geq0}[q]$
    from \Cref{lem:two-inequalities}.
                Firstly, we have
    \begin{align*}
        \mu_q(w\b) - \mu_q(w\a)
        &= \mu_q(w) \left[\mu_q(\b) - \mu_q(\a)\right]\\
        &= \left(\begin{array}{cc} m &n\\o&p \end{array}\right)
            \left[\left(\begin{array}{cc} q+2q^2 + q^3 + q^4 & 1 +q\\q + q^2 & 1 \end{array}\right)
                -\left(\begin{array}{cc} q+q^2 &1\\q&1 \end{array}\right)\right]\\
            &= \left(\begin{array}{cc} m &n\\o&p \end{array}\right)
            \left(\begin{array}{cc} q^2 + q^3 + q^4 & q\\q^2 & 0 \end{array}\right)
    \end{align*}
    We compute the entry $(1,2)$ of the above matrix and we obtain
    \begin{align*}
        \mu_q(w\b)_{12} - \mu_q(w\a)_{12} = m q \in \Z_{\geq0}[q]\setminus\{0\}
    \end{align*}
    since $m \in \Z_{\geq0}[q]\setminus\{0\}$.

                Secondly, we have
    \begin{align*}
        \mu_q(\a w\a)
        - \mu_q(\b w)
        &=\mu_q(\a)
            \left(\begin{smallmatrix} m &n\\o&p \end{smallmatrix}\right)
                \mu_q(\a)
                -\mu_q(\b)
            \left(\begin{smallmatrix} m &n\\o&p \end{smallmatrix}\right) \\
        &=\left(\begin{smallmatrix} q+q^2 &1\\q&1 \end{smallmatrix}\right)
            \left(\begin{smallmatrix} m &n\\o&p \end{smallmatrix}\right)
            \left(\begin{smallmatrix} q+q^2 &1\\q&1 \end{smallmatrix}\right)    
                -\left(\begin{smallmatrix} q+2q^2 + q^3 + q^4 & 1 +q\\q + q^2 & 1 \end{smallmatrix}\right)
            \left(\begin{smallmatrix} m &n\\o&p \end{smallmatrix}\right).
    \end{align*}
    We compute the entry $(1,2)$ of the above matrix and
    from \Cref{lem:two-inequalities}, we obtain
    \begin{align*}
        \mu_q(\a w\a)_{12}
        - \mu_q(\b w)_{12}
        &={\left(q + q^{2} \right)} m  + {\left(q + q^{2} \right)} n + o + p - {\left(q + 2 \, q^2 + q^{3} + q^{4}\right)} n-  {\left(1 + q \right)} p\\
        &= (q + q^2  )m -(q^2 + q^3 + q^4)n + o - qp \in \Z_{\geq0}[q]\setminus\{0\}.
    \end{align*}
    From the first part of the proof, we also
    have $\mu_q(\a w\a)_{12} - \mu_q(\a w\b)_{12}$ is a nonzero polynomial with nonnegative coefficients.
\end{proof}

Let 
\[
    D_q = 
    \mu_q(\b\a) - \mu_q(\a\b) = 
\left(\begin{array}{rr}
    0 & q + q^{4} \\
    -q^{2} -q^{5}  & 0
\end{array}\right).
\]
The matrix $D_q$ represent the flip between two consecutive factors in the language of a balanced language. Hence, properties of this matrix are used to prove our main result.
In the next lemma, we use the following notation. If $u\in\{\a,\b\}^*$ and $a\in\{\a,\b\}$,
then $|u|_a$ is the number of occurrences of the letter $a$ in $u$.

\begin{lemma}\label{lem:ubau-uabu}
    Let  $u\in\{\a,\b\}^*$. Then
    \[
          \mu_q(\widetilde{u} \b\a u)
        - \mu_q(\widetilde{u} \a\b u)
        = q^n D_q
        =\det(\mu_q(u)) D_q
    \]
    where $n=2|u|_\a+4|u|_\b$.
\end{lemma}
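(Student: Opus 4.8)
The plan is to prove the claimed identity
\[
    \mu_q(\widetilde{u}\b\a u) - \mu_q(\widetilde{u}\a\b u) = q^n D_q = \det(\mu_q(u))\,D_q
\]
in two separate movements. First I would establish the right-hand equality $q^n D_q = \det(\mu_q(u))D_q$, which reduces to computing the determinant $\det(\mu_q(u))$. Since $\mu_q$ is a monoid homomorphism, $\det(\mu_q(u))$ is multiplicative, so it suffices to compute $\det(\mu_q(\a))$ and $\det(\mu_q(\b))$ from the explicit matrices given in the introduction. A direct calculation gives $\det(\mu_q(\a)) = (q+q^2)\cdot 1 - 1\cdot q = q^2$ and $\det(\mu_q(\b)) = (q+2q^2+q^3+q^4)\cdot 1 - (1+q)(q+q^2) = q^4$. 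Hence $\det(\mu_q(u)) = q^{2|u|_\a + 4|u|_\b} = q^n$ with $n$ as defined, settling that equality.

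For the left-hand equality I would factor the difference using the homomorphism property. Writing $M = \mu_q(u)$, we have
\[
    \mu_q(\widetilde{u}\b\a u) - \mu_q(\widetilde{u}\a\b u)
    = \mu_q(\widetilde{u})\bigl[\mu_q(\b\a) - \mu_q(\a\b)\bigr]\mu_q(u)
    = \mu_q(\widetilde{u})\,D_q\,M.
\]
The key structural input is the relationship between $\mu_q(\widetilde{u})$ and $\mu_q(u)$ under reversal. I would establish that $\mu_q(\widetilde{u})$ is the transpose of $\mu_q(u)$, or more precisely a conjugate/adjugate-type transform of it; indeed, the generator matrices $\mu_q(\a)$ and $\mu_q(\b)$ are both symmetric at $q=1$ but not in general symmetric as $q$-matrices, so the precise reversal identity must be pinned down. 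The natural statement is that $\mu_q(\widetilde{u}) = J\,\mu_q(u)^{\mathsf{T}}J$ for a suitable fixed involution $J$ (or that $\mu_q(\widetilde{u})$ equals $\mu_q(u)^{\mathsf{T}}$ after a change of variable), which can be checked on the two generators and then propagated by an easy induction on word length using $\widetilde{uv} = \widetilde{v}\,\widetilde{u}$.

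With the reversal identity in hand, the computation becomes the evaluation of the single product $\mu_q(\widetilde{u})\,D_q\,M$. The special shape of $D_q$ — antidiagonal, with entries $q+q^4$ and $-q^2-q^5 = -q\cdot(q+q^4)$ — is what makes the product collapse. I would substitute $M = \left(\begin{smallmatrix} m & n \\ o & p \end{smallmatrix}\right)$ together with the reversal expression for $\mu_q(\widetilde u)$, multiply out the three matrices, and verify that all but the scalar multiple of $D_q$ cancels, the surviving scalar being exactly $\det(M) = q^n$. The main obstacle I anticipate is getting the reversal identity exactly right: $D_q$ is antisymmetric up to the factor $-q$ rather than genuinely antisymmetric, so the interaction between transposition, the antidiagonal structure of $D_q$, and the determinant must be tracked carefully. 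Once that bookkeeping is done, the cancellation is forced by the antidiagonal form of $D_q$ and the fact that sandwiching an antidiagonal matrix between a matrix and (a transform of) its transpose reproduces the antidiagonal matrix scaled by the determinant.
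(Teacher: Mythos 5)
Your proposal is correct, but it takes a genuinely different route from the paper. The paper's proof is a short induction that never mentions transposition: it verifies by direct computation the two generator-level identities $\mu_q(\a) D_q \mu_q(\a) = q^2 D_q = \det(\mu_q(\a)) D_q$ and $\mu_q(\b) D_q \mu_q(\b) = q^4 D_q = \det(\mu_q(\b)) D_q$, and the lemma then follows by peeling letters off the inside of the sandwich $\mu_q(\widetilde{u})D_q\mu_q(u)$: writing $u=xv$ with first letter $x$ gives $\widetilde{u}=\widetilde{v}x$, hence $\mu_q(\widetilde{u})D_q\mu_q(u) = \mu_q(\widetilde{v})\left[\mu_q(x)D_q\mu_q(x)\right]\mu_q(v) = \det(\mu_q(x))\,\mu_q(\widetilde{v})D_q\mu_q(v)$, and one iterates. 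Your plan instead isolates a structural reversal identity and then invokes the universal fact that $M^{\mathsf{T}}EM=\det(M)E$ for $E=\left(\begin{smallmatrix}0&1\\-1&0\end{smallmatrix}\right)$. This does work: the correct reversal identity is $\mu_q(\widetilde{u}) = J\,\mu_q(u)^{\mathsf{T}}J^{-1}$ with $J=\mathrm{diag}(1,q)$, which holds for both generators and propagates to all words because transposition reverses products; and since $D_q=(q+q^4)\,JE$, one gets $\mu_q(\widetilde{u})D_q\mu_q(u) = (q+q^4)\,J\,\mu_q(u)^{\mathsf{T}}E\,\mu_q(u) = (q+q^4)\det(\mu_q(u))\,JE = \det(\mu_q(u))\,D_q$, as required. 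One correction to your wording, though: no \emph{involution} $J$ can do this job. Requiring $J\mu_q(x)^{\mathsf{T}}=\mu_q(x)J$ for both generators forces $J$ to be a scalar multiple of $\mathrm{diag}(1,q)$, and no such matrix squares to the identity in $\GL_2(\Z[q^{\pm1}])$; so the identity must be a conjugation $J(\cdot)J^{-1}$, not $J(\cdot)J$ — exactly the kind of bookkeeping issue you flagged yourself. With that fixed, your argument is complete and arguably more conceptual: it explains the appearance of $\det(\mu_q(u))$ as the scaling factor of a symplectic-type form and exposes a $q$-deformed palindromic symmetry of $\mu_q$ that is of independent interest, whereas the paper's approach buys brevity and self-containedness — two $2\times2$ products and an induction, with no need to discover $J$.
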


\begin{proof}
It follows from the following two equalities
\begin{align*}
    \mu_q(\a) D_q \mu_q(\a) &= 
\left(\begin{array}{rr}
    q+q^2 & 1\\
    q  & 1
\end{array}\right)
\left(\begin{array}{rr}
    0 & q + q^{4} \\
    -q^{2} - q^{5} & 0
\end{array}\right)
\left(\begin{array}{rr}
    q+q^2 & 1\\
    q  & 1
\end{array}\right) \\
&= 
\left(\begin{array}{rr}
    0 & q^{3} + q^{6} \\
    -q^{4} - q^{7} & 0
\end{array}\right)
= q^2 D_q
    = \det(\mu_q(\a)) D_q
\end{align*}
and
\begin{align*}
    \mu_q(\b) D_q \mu_q(\b) &=
\left(\begin{array}{rr}
    q+2q^2 +q^3+q^4 & 1+q\\
    q + q^2  & 1
\end{array}\right)
\left(\begin{array}{rr}
    0 & q + q^{4} \\
    -q^{2} - q^{5} & 0
\end{array}\right)
\left(\begin{array}{rr}
    q+2q^2 +q^3+q^4 & 1+q\\
    q + q^2  & 1
\end{array}\right) \\
                            &=
\left(\begin{array}{rr}
    0 & q^{5} + q^{8} \\
    -q^{6} - q^{9} & 0
\end{array}\right)
= q^4 D_q
    = \det(\mu_q(\b)) D_q.
\end{align*}
\end{proof}

\begin{proof}[Proof of \Cref{prop:growing-on-flips-ab-ba}]
    There are three cases: 
    $u=v$,
    $v$ is longer than $u$ or 
    $u$ is longer than $v$.
    First assume $u=v$.
    We use the identity $\mu_q(\widetilde{u})D_q\mu_q(u)=q^n D_q$ for some $n>0$
    from Lemma~\ref{lem:ubau-uabu}.
    We have
    \begin{align*}
        \mu_q(\widetilde{u}\b\a v)_{12}-\mu_q(\widetilde{u}\a\b v)_{12}
        &= \left(\begin{smallmatrix} 1 & 0 \end{smallmatrix}\right) 
            \mu_q(\widetilde{u}) [\mu_q(\b\a)-\mu_q(\a\b)]\mu_q(u) 
            \left(\begin{smallmatrix} 0 \\ 1 \end{smallmatrix}\right)\\
        &= \left(\begin{smallmatrix} 1 & 0 \end{smallmatrix}\right) 
            \cdot q^n D_q \cdot
            \left(\begin{smallmatrix} 0 \\ 1 \end{smallmatrix}\right)
                = q^n \left(q^4+q\right) \in \Z_{\geq0}[q]\setminus\{0\}.
    \end{align*}
    Assume $|v|>|u|$ and let $s\in\{\a,\b\}^+$ such that $v=us$.
    We compute
    \begin{align*}
        \mu_q(\widetilde{u}\b\a v)_{12}-\mu_q(\widetilde{u}\a\b v)_{12}
        &= \left(\begin{smallmatrix} 1 & 0 \end{smallmatrix}\right) 
            \mu_q(\widetilde{u})[\mu_q(\b\a)-\mu_q(\a\b)]\mu_q(u)\mu_q(s)
            \left(\begin{smallmatrix} 0 \\ 1 \end{smallmatrix}\right)\\
        &= \left(\begin{smallmatrix} 1 & 0 \end{smallmatrix}\right) 
            \cdot q^n D_q\cdot \mu_q(s)
            \left(\begin{smallmatrix} 0 \\ 1 \end{smallmatrix}\right)\\
        &= q^n \left(\begin{smallmatrix} 0 & q^4+q \end{smallmatrix}\right) 
            \mu_q(s)
            \left(\begin{smallmatrix} 0 \\ 1 \end{smallmatrix}\right) \in \Z_{\geq0}[q]\setminus\{0\},
    \end{align*}
    since $s$ is non-empty and
    from \Cref{lem:two-inequalities}
    the entries of $\mu_q(s)$ are nonzero
    polynomials with nonnegative coefficients.

    Assume $|u|>|v|$  and let $s\in\{\a,\b\}^+$ such that $u=vs$.
    We obtain
    \begin{align*}
        \mu_q(\widetilde{u}\b\a v)_{12}-\mu_q(\widetilde{u}\a\b v)_{12}
        &= \left(\begin{smallmatrix} 1 & 0 \end{smallmatrix}\right) 
            \mu_q(\widetilde{s})\mu_q(\widetilde{v})\left[\mu_q(\b\a)-\mu_q(\a\b)\right]\mu_q(v)
            \left(\begin{smallmatrix} 0 \\ 1 \end{smallmatrix}\right)\\
        &= \left(\begin{smallmatrix} 1 & 0 \end{smallmatrix}\right) 
            \mu_q(\widetilde{s})\cdot q^nD_q\cdot
            \left(\begin{smallmatrix} 0 \\ 1 \end{smallmatrix}\right)\\
        &= q^n\left(\begin{smallmatrix} 1 & 0 \end{smallmatrix}\right) 
            \mu_q(\widetilde{s})
            \left(\begin{smallmatrix} q^4+q \\ 0 \end{smallmatrix}\right)\in \Z_{\geq0}[q]\setminus\{0\},
    \end{align*}
    since $s$ is non-empty and
    from \Cref{lem:two-inequalities}
    the entries of $\mu_q(\widetilde{s})$ are nonzero polynomials with
    nonnegative coefficients.
\end{proof}

\section{Proof of \Cref{thm:main-result-intro}}
\label{sec:proof-main-results}

The following lemma can be seen as a extension to biinfinite balanced sequences
of complexity $n+1$ of Theorem 15.2.4 from \cite{MR3887697}
stated for the $n$ conjugates of a Christoffel word of length $n$.

\begin{lemma}\label{lem:cyclic-ordering-sturmian-factors}
Let $s\in\{\a,\b\}^\Z$ be a balanced sequence
having at least one factorization $s=\widetilde{p}xyp$
        where $\{x,y\}=\{\a,\b\}$.
        Let $n\geq1$ and $u_0,\dots,u_{n}$ be the $n+1$ factors of length $n$
        of $s$
    such that 
    \[
    u_0<_{lex}\dots<_{lex}u_{n}.
    \]
    If $w$ is the prefix of length $n-1$ of $p$, we have
    \begin{itemize}
        \item $u_0=\a w$ and $u_{n}=\b w$,
        \item there exists $i\in\{0,\dots,n-1\}$ such that 
            $u_i=\widetilde{w}\a$ and $u_{i+1}=\widetilde{w}\b$,
        \item for all $j\in\{0,\dots,n-1\}\setminus\{i\}$,
            there exist prefixes $x,y$ of $w$
            such that $u_j=\widetilde{x}\a\b y$ and $u_{j+1}=\widetilde{x}\b\a y$.
    \end{itemize}
\end{lemma}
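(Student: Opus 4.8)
The plan is to exploit the two compact representations furnished by \Cref{cor:hamiltonian-sturmian} and to identify the lexicographic successor map on $\Lcal_n(s)$ with the ``central flip'' that passes between them. Write $A=\widetilde{w}\a\b w$ and $B=\widetilde{w}\b\a w$, two words of length $2n$ that agree in every position except $n-1$ and $n$, where $A$ reads $\a\b$ and $B$ reads $\b\a$. For $k\in\{0,\dots,n\}$ let $f_k$ (resp.\ $g_k$) be the factor of length $n$ occurring at position $k$ in $A$ (resp.\ $B$). By \Cref{cor:hamiltonian-sturmian} we have $\Lcal_n(s)=\Lcal_n(A)=\Lcal_n(B)$; since $s$ has complexity $n+1$ while each of $A,B$ has exactly $n+1$ windows of length $n$, the $f_k$ are pairwise distinct and so are the $g_k$. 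Hence $k\mapsto f_k$ and $k\mapsto g_k$ are two bijections from $\{0,\dots,n\}$ onto $\Lcal_n(s)$, and I may define the permutation $\psi$ of $\Lcal_n(s)$ by $\psi(f_k)=g_k$.

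Next I would compute $f_k$ and $g_k$ explicitly by recording which window meets the two flipped positions $n-1$ and $n$: the window at $k$ covers position $n-1$ exactly when $k\le n-1$, and covers position $n$ exactly when $k\ge 1$. This gives the boundary cases
\[
    f_0=\widetilde{w}\a,\quad g_0=\widetilde{w}\b,\qquad
    f_n=\b w,\quad g_n=\a w,
\]
and, for $1\le k\le n-1$, $f_k=\widetilde{x_k}\a\b\, y_k$ and $g_k=\widetilde{x_k}\b\a\, y_k$, where $x_k,y_k$ are the prefixes of $w$ of lengths $n-1-k$ and $k-1$ (so one is a prefix of the other). Comparing $f_k$ with $\psi(f_k)=g_k$ letter by letter, I get $f_k<_{lex}g_k$ for every $k\in\{0,\dots,n-1\}$, since the first discrepancy is an $\a$ against a $\b$ after the common prefix, whereas $g_n=\a w<_{lex}\b w=f_n$.

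The crux is then a short order-theoretic remark. The permutation $\psi$ strictly increases at every element except the single element $f_n=\b w$, where it strictly decreases; that is, $\psi$ has exactly one \emph{descent}. Because the values along any cycle of a permutation of a linearly ordered set must decrease at least once, every cycle of $\psi$ contains a descent; as $\psi$ has only one descent in total, it is a single $(n+1)$-cycle. A single cycle that increases at all steps but one is precisely the cyclic successor map of the linear order: starting immediately after the unique descent and iterating $\psi$ lists $\Lcal_n(s)$ in strictly increasing order. Consequently the minimum is $u_0=\psi(\b w)=\a w=\a w$ and the maximum is $u_n=\b w$, which is the first item, and $\psi(u_j)=u_{j+1}$ for $0\le j\le n-1$.

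It then remains only to read off the remaining two items. The consecutive pairs $(u_j,u_{j+1})$ are exactly the transitions $f_k\mapsto g_k$ with $k\ne n$: the transition $k=0$ is $\widetilde{w}\a\mapsto\widetilde{w}\b$, which occurs once and supplies the distinguished index $i$ with $u_i=\widetilde{w}\a$ and $u_{i+1}=\widetilde{w}\b$, while the $n-1$ transitions $k\in\{1,\dots,n-1\}$ are the flips $\widetilde{x_k}\a\b\,y_k\mapsto\widetilde{x_k}\b\a\,y_k$ with $x_k,y_k$ prefixes of $w$, covering all $j\ne i$. The step I expect to require the most care is the bookkeeping in the second paragraph: verifying that the windows straddling the two flipped positions yield exactly the asserted words $f_k,g_k$, and thereby that $\psi$ has precisely one descent. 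Once that is in place the whole combinatorial structure is forced by the one-descent/one-cycle argument, so no separate appeal to primitivity of the underlying Christoffel word is needed beyond the compact representation.
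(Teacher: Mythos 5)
Your proposal is correct and takes essentially the same approach as the paper: your permutation $\psi$ is exactly the paper's map $f$ (sending the factor at a given position of $\widetilde{w}\a\b w$ to the factor at the same position of $\widetilde{w}\b\a w$), and your one-descent/single-cycle argument is precisely the paper's observation that $u<_{lex}f(u)$ for all $u\neq\b w$ forces $f$ to be a cyclic permutation whose iteration from $\a w=f(\b w)$ lists $\Lcal_n(s)$ in increasing lexicographic order. Your explicit window bookkeeping for $f_k,g_k$ merely spells out the case analysis the paper leaves implicit.
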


\begin{proof}
    Let $f:\Lcal_n(s)\to\Lcal_n(s)$ be the map such that 
    $f(u)$ is the factor appearing in $\widetilde{w}\b\a w$
    at the same position as the occurrence of $u$ in $\widetilde{w}\a\b w$.
    From \Cref{cor:hamiltonian-sturmian}, $f$ is a bijection.

    From the definition of $f$, if $u\neq \b w$ then $u<_{lex}f(u)$.
    Thus $f$ is a cyclic permutation (if $f$ had at least 2 cycles, there would
    exist two distinct words $u$ such that $u\not<_{lex}f(u)$).
    Thus there exists a minimal word $u_0$ for the lexicographic order such that
    \[
        u_0
        <_{lex}f(u_0)
        <_{lex}f^2(u_0)
        <_{lex}\dots
        <_{lex}f^n(u_0)
        \not<_{lex}f^{n+1}(u_0)=u_0
    \]
    It also implies that the maximal factor for the lexicographic order is
    $f^n(u_0)=\b w$
    and the minimal one is $u_0=f(f^n(u_0))=f(\b w)=\a w$.
    This ends the proof if we let $u_{i}=f^i(u_0)$ for all $i\in\{1,\dots,n\}$.
\end{proof}

We may now show that the $q$-analog of the Markoff injectivity conjecture holds over the language
of a balanced sequence satisfying property $(M_3)$ or $(M_4)$.

\begin{proposition}\label{prop:increasing-on-a-sturmian}
Let $s\in\{\a,\b\}^\Z$ be a balanced sequence
having at least one factorization $s=\widetilde{p}xyp$
        where $\{x,y\}=\{\a,\b\}$.
    Let $u,v\in\Lcal(s)$ be two factors in the language
    of $s$.
    If $u<_{radix}v$, then 
    $\mu_q(u)_{12}\prec\mu_q(v)_{12}$.
\end{proposition}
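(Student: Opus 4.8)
The plan is to reduce the statement to the case of factors that are \emph{consecutive} in the radix order, and then to recognise each such consecutive pair as one of the small local changes of Equation~\eqref{eq:local-changes} already handled by Propositions~\ref{prop:bw-awa} and~\ref{prop:growing-on-flips-ab-ba}.

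First I would record that $\prec$ is transitive: if $g-f$ and $h-g$ both lie in $\Z_{\geq0}[q]\setminus\{0\}$, then so does their sum $h-f$, since no cancellation can occur among nonnegative coefficients. Hence it suffices to enumerate $\Lcal(s)$ in increasing radix order and to check $\mu_q(\cdot)_{12}\prec\mu_q(\cdot)_{12}$ for each pair of consecutive factors; the general case then follows by chaining these inequalities. The empty word is disposed of immediately: $\mu_q(\varepsilon)_{12}=0$, while by \Cref{lem:two-inequalities} every nonempty factor $w$ has $\mu_q(w)_{12}\in\Z_{\geq0}[q]\setminus\{0\}$, so $\mu_q(\varepsilon)_{12}\prec\mu_q(w)_{12}$ for free. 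Thus I may assume $u,v$ nonempty and work only with factors of length $\geq1$, where the radix order groups by length and sorts lexicographically inside each length.

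For factors of a fixed length $n\geq1$, \Cref{lem:cyclic-ordering-sturmian-factors} describes the lexicographic list $u_0<_{lex}\cdots<_{lex}u_n$ completely. Every consecutive pair $u_j\mapsto u_{j+1}$ is either a flip $\widetilde{x}\a\b y\mapsto\widetilde{x}\b\a y$ with $x,y$ prefixes of $w$ (so one is a prefix of the other, and \Cref{prop:growing-on-flips-ab-ba} applies), or the single exceptional step $\widetilde{w}\a\mapsto\widetilde{w}\b$, covered by the first inequality of \Cref{prop:bw-awa} applied to $\widetilde{w}$; in both cases $\mu_q(u_j)_{12}\prec\mu_q(u_{j+1})_{12}$. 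It remains to cross from length $n$ to length $n+1$, that is, from the lex-largest factor $\b w$ (with $w$ the length-$(n-1)$ prefix of $p$) to the lex-smallest factor of length $n+1$, namely $\a w'$ with $w'=wa$ the length-$n$ prefix of $p$ and $a\in\{\a,\b\}$. This step reads $\b w\mapsto\a w a$: if $a=\a$ it is exactly $\mu_q(\b w)_{12}\prec\mu_q(\a w\a)_{12}$ from \Cref{prop:bw-awa}, and if $a=\b$ it is $\mu_q(\b w)_{12}\prec\mu_q(\a w\a)_{12}\prec\mu_q(\a w\b)_{12}$, whence $\mu_q(\b w)_{12}\prec\mu_q(\a w\b)_{12}$ by transitivity. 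The base step $\varepsilon\mapsto\a$ is the direct computation $0\prec1$.

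The real work here is organisational rather than computational: the content lies in the already-established \Cref{prop:bw-awa,prop:growing-on-flips-ab-ba}, and the only point requiring care is that the four templates of Equation~\eqref{eq:local-changes} genuinely exhaust \emph{every} consecutive pair in the radix restriction of $\Lcal(s)$. This is exactly what \Cref{lem:cyclic-ordering-sturmian-factors} guarantees within each length, supplemented by the length-crossing analysis above; once this enumeration is pinned down, chaining the per-step inequalities yields $\mu_q(u)_{12}\prec\mu_q(v)_{12}$.
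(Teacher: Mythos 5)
Your proof is correct and follows essentially the same route as the paper's: take a maximal chain of consecutive factors in radix order, recognise each same-length step via \Cref{lem:cyclic-ordering-sturmian-factors} as a flip handled by \Cref{prop:growing-on-flips-ab-ba} (or the exceptional step $\widetilde{w}\a\mapsto\widetilde{w}\b$ handled by \Cref{prop:bw-awa}), handle each length-crossing step $\b w\mapsto \a w\a$ or $\b w \mapsto \a w\b$ via \Cref{prop:bw-awa}, and chain the inequalities by transitivity of $\prec$. Your explicit treatment of the empty word and your remark that the two prefixes $x,y$ of $w$ are comparable (so that \Cref{prop:growing-on-flips-ab-ba} indeed applies) are minor points of care that the paper leaves implicit.
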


\begin{proof}
    Let $(x_i)_{0\leq i\leq n}$ be a maximal chain
    for the radix order such that
    \[
        u=x_0
        <_{radix} x_1
        <_{radix} \cdots
        <_{radix} x_n = v.
    \]
    Let $i\in\{0,\dots,n-1\}$.
    Since the chain is maximal,
    we have $|x_{i+1}|=|x_i|$ or $|x_{i+1}|=|x_i|+1$.
    First assume that $|x_i|=|x_{i+1}|$.
    Since $x_i<_{radix}x_{i+1}$, we have $x_i<_{lex}x_{i+1}$.
    From
\Cref{lem:cyclic-ordering-sturmian-factors}
and
\Cref{prop:growing-on-flips-ab-ba}, we conclude that
    $\mu_q(x_i)_{12}\prec\mu_q(x_{i+1})_{12}$.

    Now assume that $|x_{i+1}|=|x_i|+1$.
    Since the chain is maximal, then $x_i$ is the lexicographically maximal factor of its length
    and $x_{i+1}$ is the lexicographically minimal factor of its length.
    From \Cref{lem:cyclic-ordering-sturmian-factors},
    the prefix $w$ of length $|x_i|-1$ of $p$ is such that
        $x_i = \b w$
        and
        $x_{i+1} \in\{\a w\a,\a w\b\}$.
    From \Cref{prop:bw-awa}, we obtain that
    $\mu_q(x_i)_{12}\prec\mu_q(x_{i+1})_{12}$.
\end{proof}

We may now prove the main result and its corollaries.

\begin{proof}[Proof of \Cref{thm:main-result-intro}]
    Let $s\in\{\a,\b\}^\Z$ be a balanced sequence over the alphabet
    $\Sigma=\{\a,\b\}$.
    The sequence $s$ is in one of the four
    cases: $(M_1)$, $(M_2)$, $(M_3)$ or $(M_4)$.

    If $s$ satisfies case $(M_1)$, then from \Cref{thm:balanced-4-cases}, $s$ is a purely periodic sequence 
        $\leftidx{^\infty}w^\infty$ for some Christoffel word $w$.
        From \Cref{thm:balanced-4-cases},
    there exists an eventually periodic sequence $s'$
    satisfying $(MH_4)$ and $(M_4)$
    such that $\Lcal(s)\subset\Lcal(s')$.
    If $s$ satisfies case $(M_2)$,
    then there exists a characteristic Sturmian sequence $s'$
    of the same slope
    satisfying case $(M_3)$ such that $\Lcal(s)=\Lcal(s')$.
    If $s$ already satisfies case $(M_3)$ or $(M_4)$, then let $s'=s$.

    In summary, $s'$ is a balanced sequence satisfying
    property $(M_3)$ or $(M_4)$ such that $\Lcal(s)\subset\Lcal(s')$.
Thus $s'\in\{\a,\b\}^\Z$ is a balanced sequence
having at least one factorization $s'=\widetilde{p}xyp$
        where $\{x,y\}=\{\a,\b\}$.
    From \Cref{prop:increasing-on-a-sturmian},
    if $u,v\in\Lcal(s')$ 
    such that $u<_{radix}v$, then 
    $\mu_q(u)_{12}\prec\mu_q(v)_{12}$.
\end{proof}

\begin{proof}[Proof of \Cref{cor:main-corollary-intro-injective}]
    Let $s\in\{\a,\b\}^\Z$ be a balanced sequence.
    Let $u,v\in\Lcal(s)$ such that $u\neq v$. Without loss of generality,
    we may assume that $u<_{radix}v$.
    From \Cref{thm:main-result-intro}, $\mu_q(u)_{12}\prec\mu_q(v)_{12}$.
    In particular, $\mu_q(u)_{12}\neq\mu_q(v)_{12}$.
    Therefore, the map $u\mapsto\mu_q(u)_{12}$ is injective over the language $\Lcal(s)$.
\end{proof}

\begin{proof}[Proof of \Cref{cor:main-corollary-intro-q>0}]
    Let $u,v\in\Lcal(s)$ be two factors in the language of
    a balanced sequence
    $s\in\{\a,\b\}^\Z$
    such that $u<_{radix}v$.
    From \Cref{thm:main-result-intro},
    $\mu_q(u)_{12}\prec\mu_q(v)_{12}$.
    Thus $\mu_q(v)_{12} - \mu_q(u)_{12}$
    is a nonzero polynomial with nonnegative coefficients.
    In particular,
    for every $\gamma>0$,
    $(\mu_q(v)_{12} - \mu_q(u)_{12})|_{q=\gamma} > 0$
    or equivalently
    $\mu_q(u)_{12}|_{q=\gamma} < \mu_q(v)_{12}|_{q=\gamma}$.
    In particular,
    $\mu_q(u)_{12}|_{q=\gamma} \neq \mu_q(v)_{12}|_{q=\gamma}$.
    Thus the map 
    $\{\a,\b\}^*\to\R$ defined by
    $w\mapsto\mu_q(w)_{12}|_{q=\gamma}$ is strictly increasing 
    and injective over $\Lcal(s)$ for every $\gamma>0$.
\end{proof}

\section{Conclusion}\label{sec:conclusion}

As we have shown in \Cref{thm:main-result-intro},
the map $w\mapsto\mu_q(w)_{12}$ is increasing over the
language of a balanced sequence. 
But this is not true for the language of all
balanced sequences. Thus the Markoff injectivity conjecture can not be
extended to the injectivity of the map $w\mapsto\mu_q(w)_{12}|_{q=\gamma}$ for all $\gamma>0$. 
We provide few counterexamples below.


Observe that
$\a\b\b <_{radix} \b\a\a$ but
\begin{align*}
    \mu_q(\b\a\a)_{12}&=
1 + 3 q + 4 q^{2} + 4 q^{3} + 4 q^{4} + 2 q^{5} + q^{6},\\
    \mu_q(\a\b\b)_{12}&=
1 + 2 q + 5 q^{2} + 6 q^{3} + 6 q^{4} + 5 q^{5} + 3 q^{6} + q^{7},
\end{align*}
and
    \[
    \mu_q(\b\a\a)_{12}-
    \mu_q(\a\b\b)_{12}
    =
q - q^{2} - 2 q^{3} - 2 q^{4} - 3 q^{5} - 2 q^{6} - q^{7}
\]
has negative coefficients.

Another example is
$\a\b\b\b\a\b <_{radix} \b\a\b\a\b\b$
but
\begin{align*}
    \mu_q(\b\a\b\a\b\b)_{12} - \mu_q(\a\b\b\b\a\b)_{12} = q + 3q^{2} + 7q^{3} + 12q^{4} + 17q^{5} + 20q^{6}+ 21q^{7}  + 19q^{8} \\ + 14q^{9} + 9q^{10}  + 4q^{11}+ q^{12} - q^{13} -q^{14}
\end{align*}
which has negative coefficients.

The words $\b\a\a$,
$\b\a\b\a\b\b$ and $\a\b\b\b\a\b$
are not Christoffel words, but even in the case of Christoffel words, there are counterexamples.
For example,
\begin{align*}
    \mu_q(\a\a\a\a\b)_{12} &= 1 + 4q + 8q^{2} + 13q^{3} + 16q^{4} + 17q^{5} + 14q^{6} + 10q^{7} + 5q^{8} + q^{9},\\
    \mu_q(\a\b\b\b)_{12} &=1 + 3q + 9q^{2} + 16q^{3} + 24q^{4} + 29q^{5} + 29q^{6} + 25q^{7} + 18q^{8} + 10q^{9} + 4q^{10} + q^{11},
\end{align*}
and their difference
\begin{align*}
    \mu_q(\a\a\a\a\b)_{12} -
    \mu_q(\a\b\b\b)_{12}
    = q- q^{2} - 3q^{3}  - 8q^{4} - 12q^{5} - 15q^{6}- 15q^{7} - 13q^{8} - 9q^{9} - 4q^{10} - q^{11}
\end{align*}
has negative coefficients.
Another counterexample is given by the Christoffel words $\a^{12}\b$ and
$\a\b^7$.

The above counterexamples show that 
\Cref{cor:main-corollary-intro-q>0} does not hold for $\gamma>0$ over the
language of all balanced sequences.
Thus a proof of Markoff Injectivity Conjecture needs to use the hypothesis that
polynomials are evaluated only at $q=1$, 
perhaps extending the approach used in \cite{MR2543340}.





Finally, we observe that the map $w\mapsto\mu_q(w)_{12}$ from $\{\a,\b\}^*$ to
polynomials in the indeterminate $q$ is not injective, as for example:
\begin{align*}
    \mu_q(\a\a\a\b\b\b)_{12}&=
    1 + 5 q + 16 q^{2} + 38 q^{3} + 70 q^{4} + 109 q^{5} + 145 q^{6} + 168 q^{7} + 171 q^{8} \\
    &\qquad+ 152 q^{9} + 118 q^{10} + 79 q^{11} + 44 q^{12} + 19 q^{13} + 6 q^{14} + q^{15}\\
    &=\mu_q(\a\b\b\a\a\b)_{12}.
\end{align*}

\section{Appendix: values of $\mu_q(w)_{12}$ over the language of the Fibonacci word}

The following table gathers the values of $\mu(w)_{12}$ and
$\mu_q(w)_{12}$ over the language of the Fibonacci word for factors of length
up to 9 sorted in radix order. We observe that the coefficients of the
polynomials are increasing from one row to the next.
    The graph of the 55 polynomials listed in the table 
    is shown in \Cref{fig:log_polynomials}. The difference between two polynomials of
    the same degree can't be seen in the figure as it is relatively very small.


\begin{longtable}{l|l|p{13cm}}
$w$ & $\mu(w)_{12}$ & $\mu_q(w)_{12}$ \\ \hline
$\varepsilon$ & $0$ & $0$ \\
$\color{orange}\underline{\a}$ & $\color{orange}\underline{1}$ & $1$ \\
$\color{orange}\underline{\b}$ & $\color{orange}\underline{2}$ & $1 + q$ \\
$\a\a$ & $3$ & $1 + q + q^{2}$ \\
$\color{orange}\underline{\a\b}$ & $\color{orange}\underline{5}$ & $1 + q + 2 q^{2} + q^{3}$ \\
$\b\a$ & $7$ & $1 + 2 q + 2 q^{2} + q^{3} + q^{4}$ \\
$\color{orange}\underline{\a\a\b}$ & $\color{orange}\underline{13}$ & $1 + 2 q + 3 q^{2} + 3 q^{3} + 3 q^{4} + q^{5}$ \\
$\a\b\a$ & $17$ & $1 + 2 q + 4 q^{2} + 4 q^{3} + 3 q^{4} + 2 q^{5} + q^{6}$ \\
$\b\a\a$ & $19$ & $1 + 3 q + 4 q^{2} + 4 q^{3} + 4 q^{4} + 2 q^{5} + q^{6}$ \\
$\b\a\b$ & $31$ & $1 + 3 q + 5 q^{2} + 6 q^{3} + 7 q^{4} + 5 q^{5} + 3 q^{6} + q^{7}$ \\
$\a\a\b\a$ & $44$ & $1 + 3 q + 6 q^{2} + 8 q^{3} + 9 q^{4} + 8 q^{5} + 5 q^{6} + 3 q^{7} + q^{8}$ \\
$\a\b\a\a$ & $46$ & $1 + 3 q + 6 q^{2} + 9 q^{3} + 9 q^{4} + 8 q^{5} + 6 q^{6} + 3 q^{7} + q^{8}$ \\
$\a\b\a\b$ & $75$ & $1 + 3 q + 7 q^{2} + 11 q^{3} + 14 q^{4} + 14 q^{5} + 12 q^{6} + 8 q^{7} + 4 q^{8} + q^{9}$ \\
$\b\a\a\b$ & $81$ & $1 + 4 q + 8 q^{2} + 12 q^{3} + 15 q^{4} + 15 q^{5} + 13 q^{6} + 8 q^{7} + 4 q^{8} + q^{9}$ \\
$\b\a\b\a$ & $105$ & $1 + 4 q + 9 q^{2} + 14 q^{3} + 18 q^{4} + 19 q^{5} + 17 q^{6} + 12 q^{7} + 7 q^{8} + 3 q^{9} + q^{10}$ \\
$\a\a\b\a\a$ & $119$ & $1 + 4 q + 9 q^{2} + 15 q^{3} + 20 q^{4} + 22 q^{5} + 19 q^{6} + 15 q^{7} + 9 q^{8} + 4 q^{9} + q^{10}$ \\
$\color{orange}\underline{\a\a\b\a\b}$ & $\color{orange}\underline{194}$ & $1 + 4 q + 10 q^{2} + 18 q^{3} + 27 q^{4} + 33 q^{5} + 33 q^{6} + 29 q^{7} + 21 q^{8} + 12 q^{9} + 5 q^{10} + q^{11}$ \\
$\a\b\a\a\b$ & $196$ & $1 + 4 q + 10 q^{2} + 19 q^{3} + 27 q^{4} + 33 q^{5} + 34 q^{6} + 29 q^{7} + 21 q^{8} + 12 q^{9} + 5 q^{10} + q^{11}$ \\
$\a\b\a\b\a$ & $254$ & $1 + 4 q + 11 q^{2} + 21 q^{3} + 32 q^{4} + 40 q^{5} + 42 q^{6} + 39 q^{7} + 30 q^{8} + 19 q^{9} + 10 q^{10} + 4 q^{11} + q^{12}$ \\
$\b\a\a\b\a$ & $274$ & $1 + 5 q + 13 q^{2} + 24 q^{3} + 35 q^{4} + 43 q^{5} + 46 q^{6} + 41 q^{7} + 31 q^{8} + 20 q^{9} + 10 q^{10} + 4 q^{11} + q^{12}$ \\
$\b\a\b\a\a$ & $284$ & $1 + 5 q + 13 q^{2} + 24 q^{3} + 36 q^{4} + 45 q^{5} + 47 q^{6} + 43 q^{7} + 33 q^{8} + 21 q^{9} + 11 q^{10} + 4 q^{11} + q^{12}$ \\
$\a\a\b\a\a\b$ & $507$ & $1 + 5 q + 14 q^{2} + 29 q^{3} + 48 q^{4} + 67 q^{5} + 79 q^{6} + 81 q^{7} + 71 q^{8} + 54 q^{9} + 34 q^{10} + 17 q^{11} + 6 q^{12} + q^{13}$ \\
$\a\a\b\a\b\a$ & $657$ & $1 + 5 q + 15 q^{2} + 32 q^{3} + 55 q^{4} + 79 q^{5} + 96 q^{6} + 102 q^{7} + 94 q^{8} + 76 q^{9} + 52 q^{10} + 30 q^{11} + 14 q^{12} + 5 q^{13} + q^{14}$ \\
$\a\b\a\a\b\a$ & $663$ & $1 + 5 q + 15 q^{2} + 33 q^{3} + 56 q^{4} + 80 q^{5} + 97 q^{6} + 103 q^{7} + 95 q^{8} + 76 q^{9} + 52 q^{10} + 30 q^{11} + 14 q^{12} + 5 q^{13} + q^{14}$ \\
$\a\b\a\b\a\a$ & $687$ & $1 + 5 q + 15 q^{2} + 33 q^{3} + 57 q^{4} + 82 q^{5} + 100 q^{6} + 107 q^{7} + 99 q^{8} + 80 q^{9} + 55 q^{10} + 32 q^{11} + 15 q^{12} + 5 q^{13} + q^{14}$ \\
$\b\a\a\b\a\a$ & $741$ & $1 + 6 q + 18 q^{2} + 38 q^{3} + 64 q^{4} + 90 q^{5} + 109 q^{6} + 115 q^{7} + 105 q^{8} + 84 q^{9} + 57 q^{10} + 33 q^{11} + 15 q^{12} + 5 q^{13} + q^{14}$ \\
$\b\a\a\b\a\b$ & $1208$ & $1 + 6 q + 19 q^{2} + 43 q^{3} + 78 q^{4} + 119 q^{5} + 156 q^{6} + 178 q^{7} + 179 q^{8} + 158 q^{9} + 121 q^{10} + 80 q^{11} + 44 q^{12} + 19 q^{13} + 6 q^{14} + q^{15}$ \\
$\b\a\b\a\a\b$ & $1210$ & $1 + 6 q + 19 q^{2} + 43 q^{3} + 78 q^{4} + 119 q^{5} + 156 q^{6} + 179 q^{7} + 179 q^{8} + 158 q^{9} + 122 q^{10} + 80 q^{11} + 44 q^{12} + 19 q^{13} + 6 q^{14} + q^{15}$ \\
$\a\a\b\a\a\b\a$ & $1715$ & $1 + 6 q + 20 q^{2} + 48 q^{3} + 91 q^{4} + 145 q^{5} + 198 q^{6} + 237 q^{7} + 249 q^{8} + 233 q^{9} + 192 q^{10} + 138 q^{11} + 86 q^{12} + 45 q^{13} + 19 q^{14} + 6 q^{15} + q^{16}$ \\
$\a\a\b\a\b\a\a$ & $1777$ & $1 + 6 q + 20 q^{2} + 48 q^{3} + 92 q^{4} + 148 q^{5} + 203 q^{6} + 244 q^{7} + 259 q^{8} + 243 q^{9} + 201 q^{10} + 146 q^{11} + 91 q^{12} + 48 q^{13} + 20 q^{14} + 6 q^{15} + q^{16}$ \\
$\a\b\a\a\b\a\a$ & $1793$ & $1 + 6 q + 20 q^{2} + 49 q^{3} + 94 q^{4} + 150 q^{5} + 206 q^{6} + 247 q^{7} + 261 q^{8} + 245 q^{9} + 202 q^{10} + 146 q^{11} + 91 q^{12} + 48 q^{13} + 20 q^{14} + 6 q^{15} + q^{16}$ \\
$\a\b\a\a\b\a\b$ & $2923$ & $1 + 6 q + 21 q^{2} + 54 q^{3} + 110 q^{4} + 188 q^{5} + 276 q^{6} + 356 q^{7} + 405 q^{8} + 411 q^{9} + 371 q^{10} + 296 q^{11} + 207 q^{12} + 125 q^{13} + 63 q^{14} + 25 q^{15} + 7 q^{16} + q^{17}$ \\
$\a\b\a\b\a\a\b$ & $2927$ & $1 + 6 q + 21 q^{2} + 54 q^{3} + 110 q^{4} + 188 q^{5} + 276 q^{6} + 356 q^{7} + 406 q^{8} + 412 q^{9} + 371 q^{10} + 297 q^{11} + 208 q^{12} + 125 q^{13} + 63 q^{14} + 25 q^{15} + 7 q^{16} + q^{17}$ \\
$\b\a\a\b\a\a\b$ & $3157$ & $1 + 7 q + 25 q^{2} + 63 q^{3} + 126 q^{4} + 211 q^{5} + 306 q^{6} + 390 q^{7} + 439 q^{8} + 441 q^{9} + 394 q^{10} + 312 q^{11} + 216 q^{12} + 129 q^{13} + 64 q^{14} + 25 q^{15} + 7 q^{16} + q^{17}$ \\
$\b\a\a\b\a\b\a$ & $4091$ & $1 + 7 q + 26 q^{2} + 68 q^{3} + 140 q^{4} + 241 q^{5} + 358 q^{6} + 467 q^{7} + 542 q^{8} + 562 q^{9} + 521 q^{10} + 433 q^{11} + 319 q^{12} + 207 q^{13} + 116 q^{14} + 55 q^{15} + 21 q^{16} + 6 q^{17} + q^{18}$ \\
$\b\a\b\a\a\b\a$ & $4093$ & $1 + 7 q + 26 q^{2} + 68 q^{3} + 140 q^{4} + 241 q^{5} + 358 q^{6} + 468 q^{7} + 542 q^{8} + 562 q^{9} + 522 q^{10} + 433 q^{11} + 319 q^{12} + 207 q^{13} + 116 q^{14} + 55 q^{15} + 21 q^{16} + 6 q^{17} + q^{18}$ \\
$\color{orange}\underline{\a\a\b\a\a\b\a\b}$ & $\color{orange}\underline{7561}$ & $1 + 7 q + 27 q^{2} + 75 q^{3} + 166 q^{4} + 309 q^{5} + 496 q^{6} + 701 q^{7} + 881 q^{8} + 994 q^{9} + 1008 q^{10} + 920 q^{11} + 753 q^{12} + 548 q^{13} + 351 q^{14} + 194 q^{15} + 89 q^{16} + 32 q^{17} + 8 q^{18} + q^{19}$ \\
$\a\a\b\a\b\a\a\b$ & $7571$ & $1 + 7 q + 27 q^{2} + 75 q^{3} + 166 q^{4} + 309 q^{5} + 496 q^{6} + 701 q^{7} + 882 q^{8} + 995 q^{9} + 1010 q^{10} + 922 q^{11} + 754 q^{12} + 550 q^{13} + 352 q^{14} + 194 q^{15} + 89 q^{16} + 32 q^{17} + 8 q^{18} + q^{19}$ \\
$\a\b\a\a\b\a\a\b$ & $7639$ & $1 + 7 q + 27 q^{2} + 76 q^{3} + 169 q^{4} + 314 q^{5} + 504 q^{6} + 711 q^{7} + 893 q^{8} + 1006 q^{9} + 1018 q^{10} + 928 q^{11} + 758 q^{12} + 551 q^{13} + 352 q^{14} + 194 q^{15} + 89 q^{16} + 32 q^{17} + 8 q^{18} + q^{19}$ \\
$\a\b\a\a\b\a\b\a$ & $9899$ & $1 + 7 q + 28 q^{2} + 81 q^{3} + 185 q^{4} + 353 q^{5} + 579 q^{6} + 836 q^{7} + 1075 q^{8} + 1242 q^{9} + 1296 q^{10} + 1222 q^{11} + 1040 q^{12} + 797 q^{13} + 545 q^{14} + 329 q^{15} + 172 q^{16} + 76 q^{17} + 27 q^{18} + 7 q^{19} + q^{20}$ \\
$\a\b\a\b\a\a\b\a$ & $9901$ & $1 + 7 q + 28 q^{2} + 81 q^{3} + 185 q^{4} + 353 q^{5} + 579 q^{6} + 836 q^{7} + 1075 q^{8} + 1243 q^{9} + 1296 q^{10} + 1222 q^{11} + 1041 q^{12} + 797 q^{13} + 545 q^{14} + 329 q^{15} + 172 q^{16} + 76 q^{17} + 27 q^{18} + 7 q^{19} + q^{20}$ \\
$\b\a\a\b\a\a\b\a$ & $10679$ & $1 + 8 q + 33 q^{2} + 95 q^{3} + 214 q^{4} + 401 q^{5} + 649 q^{6} + 926 q^{7} + 1178 q^{8} + 1348 q^{9} + 1393 q^{10} + 1303 q^{11} + 1100 q^{12} + 836 q^{13} + 567 q^{14} + 339 q^{15} + 176 q^{16} + 77 q^{17} + 27 q^{18} + 7 q^{19} + q^{20}$ \\
$\b\a\a\b\a\b\a\a$ & $11065$ & $1 + 8 q + 33 q^{2} + 95 q^{3} + 215 q^{4} + 406 q^{5} + 661 q^{6} + 947 q^{7} + 1211 q^{8} + 1393 q^{9} + 1446 q^{10} + 1358 q^{11} + 1152 q^{12} + 879 q^{13} + 598 q^{14} + 359 q^{15} + 186 q^{16} + 81 q^{17} + 28 q^{18} + 7 q^{19} + q^{20}$ \\
$\b\a\b\a\a\b\a\a$ & $11069$ & $1 + 8 q + 33 q^{2} + 95 q^{3} + 215 q^{4} + 406 q^{5} + 661 q^{6} + 948 q^{7} + 1212 q^{8} + 1393 q^{9} + 1447 q^{10} + 1359 q^{11} + 1152 q^{12} + 879 q^{13} + 598 q^{14} + 359 q^{15} + 186 q^{16} + 81 q^{17} + 28 q^{18} + 7 q^{19} + q^{20}$ \\
$\b\a\b\a\a\b\a\b$ & $18045$ & $1 + 8 q + 34 q^{2} + 102 q^{3} + 242 q^{4} + 481 q^{5} + 826 q^{6} + 1251 q^{7} + 1692 q^{8} + 2063 q^{9} + 2278 q^{10} + 2284 q^{11} + 2078 q^{12} + 1712 q^{13} + 1270 q^{14} + 841 q^{15} + 490 q^{16} + 246 q^{17} + 103 q^{18} + 34 q^{19} + 8 q^{20} + q^{21}$ \\
$\a\a\b\a\a\b\a\b\a$ & $25606$ & $1 + 8 q + 35 q^{2} + 109 q^{3} + 268 q^{4} + 551 q^{5} + 977 q^{6} + 1527 q^{7} + 2132 q^{8} + 2686 q^{9} + 3071 q^{10} + 3198 q^{11} + 3037 q^{12} + 2626 q^{13} + 2063 q^{14} + 1464 q^{15} + 930 q^{16} + 522 q^{17} + 254 q^{18} + 104 q^{19} + 34 q^{20} + 8 q^{21} + q^{22}$ \\
$\a\a\b\a\b\a\a\b\a$ & $25610$ & $1 + 8 q + 35 q^{2} + 109 q^{3} + 268 q^{4} + 551 q^{5} + 977 q^{6} + 1527 q^{7} + 2132 q^{8} + 2686 q^{9} + 3072 q^{10} + 3199 q^{11} + 3037 q^{12} + 2627 q^{13} + 2064 q^{14} + 1464 q^{15} + 930 q^{16} + 522 q^{17} + 254 q^{18} + 104 q^{19} + 34 q^{20} + 8 q^{21} + q^{22}$ \\
$\a\b\a\a\b\a\a\b\a$ & $25840$ & $1 + 8 q + 35 q^{2} + 110 q^{3} + 272 q^{4} + 560 q^{5} + 993 q^{6} + 1550 q^{7} + 2162 q^{8} + 2720 q^{9} + 3105 q^{10} + 3228 q^{11} + 3060 q^{12} + 2642 q^{13} + 2072 q^{14} + 1468 q^{15} + 931 q^{16} + 522 q^{17} + 254 q^{18} + 104 q^{19} + 34 q^{20} + 8 q^{21} + q^{22}$ \\
$\a\b\a\a\b\a\b\a\a$ & $26774$ & $1 + 8 q + 35 q^{2} + 110 q^{3} + 273 q^{4} + 565 q^{5} + 1007 q^{6} + 1580 q^{7} + 2214 q^{8} + 2797 q^{9} + 3208 q^{10} + 3349 q^{11} + 3187 q^{12} + 2763 q^{13} + 2175 q^{14} + 1546 q^{15} + 983 q^{16} + 552 q^{17} + 268 q^{18} + 109 q^{19} + 35 q^{20} + 8 q^{21} + q^{22}$ \\
$\a\b\a\b\a\a\b\a\a$ & $26776$ & $1 + 8 q + 35 q^{2} + 110 q^{3} + 273 q^{4} + 565 q^{5} + 1007 q^{6} + 1580 q^{7} + 2214 q^{8} + 2798 q^{9} + 3208 q^{10} + 3349 q^{11} + 3188 q^{12} + 2763 q^{13} + 2175 q^{14} + 1546 q^{15} + 983 q^{16} + 552 q^{17} + 268 q^{18} + 109 q^{19} + 35 q^{20} + 8 q^{21} + q^{22}$ \\
$\a\b\a\b\a\a\b\a\b$ & $43651$ & $1 + 8 q + 36 q^{2} + 117 q^{3} + 302 q^{4} + 653 q^{5} + 1219 q^{6} + 2008 q^{7} + 2958 q^{8} + 3937 q^{9} + 4763 q^{10} + 5261 q^{11} + 5315 q^{12} + 4910 q^{13} + 4141 q^{14} + 3176 q^{15} + 2200 q^{16} + 1363 q^{17} + 744 q^{18} + 350 q^{19} + 137 q^{20} + 42 q^{21} + 9 q^{22} + q^{23}$ \\
$\b\a\a\b\a\a\b\a\b$ & $47081$ & $1 + 9 q + 42 q^{2} + 137 q^{3} + 351 q^{4} + 750 q^{5} + 1384 q^{6} + 2254 q^{7} + 3286 q^{8} + 4331 q^{9} + 5194 q^{10} + 5690 q^{11} + 5702 q^{12} + 5229 q^{13} + 4378 q^{14} + 3333 q^{15} + 2292 q^{16} + 1409 q^{17} + 763 q^{18} + 356 q^{19} + 138 q^{20} + 42 q^{21} + 9 q^{22} + q^{23}$ \\
$\b\a\a\b\a\b\a\a\b$ & $47143$ & $1 + 9 q + 42 q^{2} + 137 q^{3} + 351 q^{4} + 750 q^{5} + 1384 q^{6} + 2254 q^{7} + 3287 q^{8} + 4334 q^{9} + 5199 q^{10} + 5697 q^{11} + 5712 q^{12} + 5239 q^{13} + 4387 q^{14} + 3341 q^{15} + 2297 q^{16} + 1412 q^{17} + 764 q^{18} + 356 q^{19} + 138 q^{20} + 42 q^{21} + 9 q^{22} + q^{23}$ \\
$\b\a\b\a\a\b\a\a\b$ & $47159$ & $1 + 9 q + 42 q^{2} + 137 q^{3} + 351 q^{4} + 750 q^{5} + 1384 q^{6} + 2255 q^{7} + 3289 q^{8} + 4336 q^{9} + 5202 q^{10} + 5700 q^{11} + 5714 q^{12} + 5241 q^{13} + 4388 q^{14} + 3341 q^{15} + 2297 q^{16} + 1412 q^{17} + 764 q^{18} + 356 q^{19} + 138 q^{20} + 42 q^{21} + 9 q^{22} + q^{23}$ \\
$\b\a\b\a\a\b\a\b\a$ & $61111$ & $1 + 9 q + 43 q^{2} + 144 q^{3} + 378 q^{4} + 826 q^{5} + 1556 q^{6} + 2585 q^{7} + 3844 q^{8} + 5171 q^{9} + 6336 q^{10} + 7105 q^{11} + 7310 q^{12} + 6905 q^{13} + 5985 q^{14} + 4749 q^{15} + 3434 q^{16} + 2249 q^{17} + 1321 q^{18} + 687 q^{19} + 310 q^{20} + 118 q^{21} + 36 q^{22} + 8 q^{23} + q^{24}$ \\
    \caption{The values of $\mu(w)_{12}$ and $\mu_q(w)_{12}$ for the factors of the Fibonacci
    word of length up to 9 ordered in radix order. Factors of the Fibonacci word that are 
    Christoffel words and Markoff numbers are underlined.
    Christoffel words form a sparse subset of the language
    of the Fibonacci word.}
    \label{T:fibo_factor}
\end{longtable}

\bibliographystyle{myalpha} 
\bibliography{biblio}

\end{document}